\title{A general connected sum formula for the families Seiberg-Witten invariant}
\author{Joshua Tomlin}
\address{School of Mathematical Sciences, University of Adelaide, Adelaide SA 5005, Australia}
\email{joshua.tomlin@adelaide.edu.au}
\begin{document}

\pagenumbering{arabic}
\maketitle
\markright{{\tiny A GENERAL CONNECTED SUM FORMULA FOR THE FAMILIES SEIBERG-WITTEN INVARIANT}}

\begin{abstract}
    In ordinary Seiberg-Witten theory, there are well known connected sum formulae such as the vanishing formula and the blow up formula. For families Seiberg-Witten theory, there are results such as Liu's families blow-up formula and Baraglia-Konno's gluing formula, but these have limited uses. In this paper, we prove a general connected sum formula which incorporates these results. This is subsequent work to a previous paper \cite{Part1} in which we proved a connected sum formula for the Bauer-Furuta invariant.
\end{abstract}

\section{Introduction}

Since 1994, Seiberg-Witten theory has been a central tool in the study of smooth 4-manifolds. In recent years, attention has been focused on studying the Seiberg-Witten theory of 4-manifold families. Some striking results appear even in the simplest case of one parameter families such as mapping tori \cite{SmoothIsotopyObstruction}. In ordinary Seiberg-Witten theory, the Seiberg-Witten invariant of a connected sum is well understood through the vanishing formula \cite{SalamonVanishingTheorem} and the blow-up formula \cite{GeneralisedBlowupFormula}. There are some similar results for families such as Liu's families blow-up formula \cite{FamiliesBlowupFormula} and Baraglia-Konno's gluing formula \cite{BaragliaSWGluingFormula}, however these results only apply to limited situations. 

Let $\pi : E \to B$ be a locally trivial family of smooth, oriented, closed 4-manifolds $X$ over a compact, smooth, connected base manifold $B$. Let $n$ be the number of connected components of $X$ and write $X_b = \pi\inv(b)$ to denote the fibre of $E$ over $b \in B$. Fix a smooth family of metrics $g$ and a smooth family of \spinc structures $\sfrak$ on the vertical tangent bundle $E/B$ of $E$. Write $W \to B$ to denote the corresponding family of spinor bundles with $W = W^+ \oplus W^-$ the associated splitting into positive and negative spinors. Let $\Jcal \to B$ be the Jacobian torus bundle with fibre $H^1(X_b ; \R)/H^1(X_b ; 2\pi \Z)$ for $b \in B$. The Seiberg-Witten monopole map $\mu$ is a map between Hilbert bundles over $\Jcal$ which takes the form 
\begin{align*}
    \mu : L^2_k(E, W^+ \oplus T^* E/B \oplus \R^n) &\to L^2_{k-1}(E, W^- \oplus \Lambda^2_+(T^* E/B) \oplus \R) \oplus \Hcal^1(\R).
\end{align*}
The map is described by the Seiberg-Witten equations so that a level set $\mu\inv(0)$ is the set of Seiberg-Witten monopoles. Here $k \geq 4$ is some fixed parameter. Since $E$ is compact, integration determines an $L^2$-metric on sections of spinors and forms, and $L^2_k$ denotes the Sobolev space of sections with $k$ weak derivatives in $L^2$. The bundle $\Hcal^1(\R) \to \Jcal$ is trivial with standard fibre $H^1(X ; \R)$, viewed as the space of harmonic one-forms on $X$. On the above bundles, there is a fibre-preserving $\bT^n = (S^1)^{\times n}$ torus action of constant gauge transformations under which $\mu$ is equivariant. For an explicit description see Example \ref{Ex:FamMu} and \cite{Part1}.

This paper is subsequent work to \cite{Part1}, in which we proved a connected sum formula for the families Bauer-Furuta invariant. The Bauer-Furuta invariant is a stable cohomotopy refinement of the Seiberg-Witten invariant.  Through approximation by finite dimensional subspaces, the Seiberg-Witten monopole map $\mu$ defines a Bauer-Furuta class 
\[[\mu] \in \pi^{b^+}_{S^1, \Ucal}(\Jcal, \ind_J D).\]
Here $\Ucal$ is the $S^1$-universe $\Ucal = L^2_{k-1}(X, W^- \oplus \Lambda^2_+(T^* X) \oplus \R^n) \oplus H^1(X ; \R)$, $\ind_\Jcal D$ is the virtual index bundle of the Dirac operator and $b^+$ is the dimension of $H^2_+(X ; \R)$. Below is the primary result that we will be using.
\begin{theorem}[\cite{Part1} Theorem 6.8]\label{T:FamBFCSF}
    For $j \in \{1,2\}$, let $E_j \to B$ be a 4-manifold family equipped with a \spinc structure $\sfrak_j$ on the vertical tangent bundle. Let $i_j : B \to E_j$ be a section with normal bundle $V_j$ and assume that $\vphi : V_1 \to V_2$ is an orientation reversing isomorphism satisfying 
    \begin{align*}
        \vphi(i_1^*(\sfrak_{E_1})) \cong i_2^*(\sfrak_{E_2}).
    \end{align*} 
    Then the families Bauer-Furuta class of the fiberwise connected sum $E = E_1 \#_B E_2$ is
    \begin{align}
        [\mu_{E}] &= [\mu_{E_1}] \wedge_\Jcal [\mu_{E_2}].
    \end{align}
\end{theorem}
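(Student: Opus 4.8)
\emph{Proof strategy.} The plan is to establish the identity by a neck-stretching (gluing) argument carried out directly on the Seiberg--Witten monopole map, combined with the compatibility of finite-dimensional approximation with fibrewise smash products. First I would equip $E = E_1 \#_B E_2$ with a smooth family of metrics containing a long cylindrical neck $[-T,T]\times S^3$ joining the two summands along the sections $i_1,i_2$; since the family Bauer--Furuta class is unchanged under such a deformation of the family of metrics, computing it for the neck-stretched family suffices. The hypothesis $\vphi(i_1^*(\sfrak_{E_1}))\cong i_2^*(\sfrak_{E_2})$ is precisely what allows $\sfrak_{E_1}$ and $\sfrak_{E_2}$ to be glued across the neck to a family \spinc structure $\sfrak_E$, so that the monopole map $\mu_E$ of the statement is defined in the first place. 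The key geometric input is that $S^3$ carries a metric of positive scalar curvature and has no harmonic one-forms and no harmonic spinors; by the Weitzenb\"ock formula the linearised equations on the neck are then invertible, so the neck contributes no gluing obstruction and no reducible locus.

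Next, on the linear level, I would check that as $T\to\infty$ the Dirac index bundle splits as $\ind_\Jcal D_E \cong \ind_\Jcal D_{E_1}\oplus\ind_\Jcal D_{E_2}$ (Atiyah--Patodi--Singer boundary value problems on the two truncated pieces, with no $S^3$-correction because $\ker D_{S^3}=0$), that $b^+(X) = b^+(X_1)+b^+(X_2)$, and that $H^1(X;\R)\cong H^1(X_1;\R)\oplus H^1(X_2;\R)$. Consequently the Jacobian torus bundle of $E$ is the fibre product $\Jcal_{E_1}\times_B\Jcal_{E_2}$, and the ambient $S^1$-universe and index data split accordingly; this pins down the space over which the two sides of the claimed equation are compared, with $[\mu_{E_j}]$ pulled back along the projection $\Jcal_{E_1}\times_B\Jcal_{E_2}\to\Jcal_{E_j}$ before smashing.

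The analytic heart is a fibrewise splicing construction. For $T$ large I would build a map sending a pair of configurations on $X_1,X_2$ (with cylindrical ends) to a configuration on $X$ by cutting off and gluing, together with an approximate inverse coming from a partition of unity adapted to the two summands and the neck. Using a uniform (in $b\in B$ and in $T$) elliptic estimate on the long cylinder together with the a priori $C^0$ bound on the spinor from the scalar-curvature term, one shows that $\mu_E$ decomposes, up to an error that is exponentially small in $T$ and hence zero-free for $T\gg 0$, as $\mu_{E_1}\times\mu_{E_2}$ on the two summands plus the \emph{linear} invertible operator on the remaining neck modes. Passing to compatible finite-dimensional approximations $U = U_1\oplus U_2\oplus U_{\mathrm{neck}}$ and $U' = U_1'\oplus U_2'\oplus U_{\mathrm{neck}}'$, the restricted map $\bar\mu_E$ is $S^1$-equivariantly and fibrewise-properly homotopic, over $\Jcal_{E_1}\times_B\Jcal_{E_2}$, to $(\bar\mu_{E_1}\wedge_B\bar\mu_{E_2})$ smashed with a linear isomorphism on the neck factor, the latter contributing only the index shift already built into the normalisation of the virtual index bundle. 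Since $\wedge_\Jcal$ of Bauer--Furuta classes is by definition the stable class of the fibrewise smash of such finite-dimensional models, this yields $[\mu_E] = [\mu_{E_1}]\wedge_\Jcal[\mu_{E_2}]$.

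The hard part will be the properness and uniformity of the splicing step: one must rule out a sequence of approximate monopoles on $E$, possibly over varying $b\in B$, whose energy concentrates on the lengthening neck, and must make every estimate uniform over the compact base and natural under the structure group of the family, so that the individual homotopies assemble into a genuine fibrewise homotopy over $B$. This is the families analogue of Taubes-type gluing, and the positive scalar curvature of $S^3$ is exactly what keeps it tractable by removing the usual obstruction bundle and reducible-wall phenomena. Verifying $S^1$-equivariance throughout, and bookkeeping the variation of $\Jcal$ and $\ind_\Jcal D$ over $B$, is then routine but must be handled with some care.
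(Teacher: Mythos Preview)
This theorem is not proved in the present paper at all: it is quoted verbatim from the companion paper \cite{Part1} (Theorem~6.8 there) and used here only as a black box, most notably in the one-line proof of Theorem~\ref{T:FSW-CSF}. So there is no proof in this paper to compare your proposal against.

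That said, your sketch is the expected shape of the argument and is broadly in line with how such results are established (and, presumably, how \cite{Part1} does it): stretch the neck, use positive scalar curvature on $S^3$ to kill the neck contribution, splice configurations, and show the finite-dimensional approximations of $\mu_E$ are properly $S^1$-homotopic to the smash of approximations of $\mu_{E_1}$ and $\mu_{E_2}$. The points you flag as delicate---uniformity over the compact base $B$, assembling fibrewise homotopies, and bookkeeping the Jacobian bundle $\Jcal=\Jcal_{E_1}\times_B\Jcal_{E_2}$---are indeed where the work lies. If you want to actually carry this out, you should consult \cite{Part1} directly rather than the present paper.
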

In this paper, we will derive a general connected sum formula for the families Seiberg-Witten invariant. This is accomplished by recovering the families Seiberg-Witten invariant from the families Bauer-Furuta invariant and applying Theorem \ref{T:FamBFCSF}. The families Seiberg-Witten invariant is usually defined in terms of the moduli space of $\mu$ for a suitably chosen perturbation. However, Baraglia-Konno have previously described a reformulation of the families Seiberg-Witten invariant in equivariant cohomology \cite{BaragliaKonnoBFandSW}. After reviewing their work, we derive a localisation formula for the Bauer-Furuta class of a smash product of two monopole maps in Section \ref{S:Localisation}. Following this, we perform the necessary cohomology calculations in  Section \ref{Ch:FSW-CSF} which lead to the final formula.
\begin{theorem}[Families Seiberg-Witten Connected Sum Formula]\label{Tintro:FSW-CSF}
    For $j \in \{1,2\}$, let $E_j \to B$ be a 4-manifold family equipped with a \spinc structure $\sfrak_j$ on the vertical tangent bundle. Let $i_j : B \to E_j$ be a section with normal bundle $V_j$ and assume that $\vphi : V_1 \to V_2$ is an orientation reversing isomorphism satisfying 
    \[\vphi(i_1^*(\sfrak_{E_1})) \cong i_2^*(\sfrak_{E_2}).\] 
    Set $E = E_1 \#_B E_2$ and let $\phi_2$ be a chamber for $\mu_2$ with $\phi = (0, \phi_2)$ defining a chamber for $\mu$. Then 
    \begin{align}
        SW^{\mu, \phi}_m &= SW^{\mu_2,\phi_2}(x^m \degSone(\mu_1)).
    \end{align}
\end{theorem}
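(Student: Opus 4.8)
The strategy is to combine the families Bauer-Furuta connected sum formula (Theorem \ref{T:FamBFCSF}) with the cohomological reformulation of the families Seiberg-Witten invariant due to Baraglia-Konno, together with the localisation formula for smash products established in Section \ref{S:Localisation}. So the starting point is $[\mu_E] = [\mu_{E_1}] \wedge_\Jcal [\mu_{E_2}]$, and the goal is to extract the numerical invariant $SW^{\mu,\phi}_m$ from this smash product.

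First I would set up the cohomological picture: by Baraglia-Konno, $SW^{\mu,\phi}_m$ is computed by pulling back a canonical cohomology class (a power of the $S^1$-equivariant Euler class / the class $x^m$ coming from $BS^1$, capped against the fundamental class of an appropriate moduli space or against the pushforward of the Bauer-Furuta class) along the map to $\Jcal$, in the chamber determined by $\phi$. Since $\phi = (0,\phi_2)$, the relevant chamber for $\mu_E$ lies in the ``product region'' where the first factor $\mu_1$ contributes through its reducible locus while $\mu_2$ is perturbed by $\phi_2$. The key input here is the localisation formula from Section \ref{S:Localisation}, which expresses the image of a smash product $[\mu_1] \wedge_\Jcal [\mu_2]$ under the relevant cohomological operation in terms of the $S^1$-equivariant localisation data of $[\mu_1]$ — this is where the factor $\degSone(\mu_1)$ enters, as the localised contribution of the first monopole map at its fixed locus.

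The main steps, in order, would be: (1) apply Theorem \ref{T:FamBFCSF} to write $[\mu_E]$ as a smash product over $\Jcal$; (2) recall the Baraglia-Konno formula expressing $SW^{\mu,\phi}_m$ as a characteristic number obtained from $[\mu_E]$ by pushing forward along $\pi_\Jcal$ and pairing with $x^m$; (3) substitute the smash product and invoke the Section \ref{S:Localisation} localisation result to split the pushforward into a contribution of $\mu_2$ (carrying the chamber $\phi_2$) and a contribution of $\mu_1$ at its $S^1$-fixed point set; (4) identify the $\mu_1$-contribution with $\degSone(\mu_1)$ and check that the chamber for $\mu$ corresponds correctly to $(0,\phi_2)$ so that the wall-crossing/chamber structure is consistent; (5) reassemble to get $SW^{\mu_2,\phi_2}(x^m \degSone(\mu_1))$. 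The routine parts are the Sobolev/index bookkeeping and verifying dimension counts so that $x^m$ has the right degree.

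The hard part will be step (3): making the localisation argument genuinely compatible with the chamber structure. The subtlety is that $SW^{\mu,\phi}_m$ depends on the chamber $\phi$, and one must verify that the $S^1$-equivariant localisation of the smash product — which a priori requires a fixed-point computation that could cross walls — actually descends to a well-defined statement in the chamber $\phi = (0,\phi_2)$, so that $\degSone(\mu_1)$ appears as a \emph{chamber-independent} contribution of the first factor while all the chamber dependence is absorbed into $SW^{\mu_2,\phi_2}$. Carefully tracking how the reducible locus of $\mu_1$ sits inside the fixed set of the $S^1$-action on the smash product, and confirming that its normal Euler class is invertible (so localisation applies), is the technical crux; once that is in place, the cohomology calculations of Section \ref{Ch:FSW-CSF} finish the argument.
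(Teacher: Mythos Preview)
Your proposal is correct and follows essentially the same route as the paper: the proof of Theorem~\ref{Tintro:FSW-CSF} (restated as Theorem~\ref{T:FSW-CSF}) is literally two lines---apply Theorem~\ref{T:FamBFCSF} to get $[\mu] = [\mu_1]\wedge_{\Jcal}[\mu_2]$, then invoke Theorem~\ref{T:FSWofWedge}---and your steps (2)--(5) are exactly the content of Theorem~\ref{T:FSWofWedge} and its supporting lemmas.

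One calibration on where the work actually sits: you locate the crux in checking that the normal Euler class is invertible and that localisation is compatible with chambers, but in the paper the invertibility (Lemma~\ref{L:e(N_i)calc}) is routine. The genuine technical point is that, because $\phi=(0,\phi_2)$, the refined Thom class factors as $\ttau^\phi_{V,U}=\tau_{V_1,U_1}\smile \ttau^{\phi_2}_{V_2,U_2}$ (Lemma~\ref{L:RefinedThomSplit}), and this forces the restriction $(\iota_1')^*\eta^\phi$ to vanish (Lemma~\ref{L:i*etaCalc}). That vanishing is what kills the first localisation term outright, so the entire answer comes from the $\P(V_2')$ fixed component, where the normal-bundle contribution $e(N_2)/e(N_2')=e(M_2)^{-1}$ combined with $e(H_1^+)$ is then identified with $\degSone(\mu_1)$ via the Segre-class expansion. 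So your instinct that the chamber data must be ``absorbed'' into the second factor is right, but the mechanism is the Thom-class splitting rather than a wall-crossing analysis. Also, no Sobolev bookkeeping is needed here: everything takes place at the level of finite-dimensional monopole maps.
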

Let $E_1, E_2 \to B$ be families as in Theorem \ref{T:FSW-CSF} with fibre $X_1$ and $X_2$ respectively. Write $H_i^+ \to B$ to denote the $b^+(X_i)$-dimensional bundle of self-dual harmonic 2-forms on $X_i$. Assume that the virtual dimension of the moduli space for $X_1$ is given by $d(X_1, \sfrak_{X_1}) = 2m \geq 0$. In \cite[Theorem 4.4]{BaragliaSWGluingFormula}, Baraglia-Konno derived a Seiberg-Witten connected sum formula under the assumptions that 
\begin{enumerate}
    \item $b_1(X_2) = 0$
    \item $c_1(\sfrak_{X_2})^2 = \sigma(X_2)$
    \item $b^+(X_1) + b^+(X_2) > \dim B + 1$
\end{enumerate}
These assumptions where necessary to perform delicate gluing arguments. The first assumption implies $\Jcal(X_2) = 0$ and simplifies the structure of the reducible monopoles on $X_2$. The second assumption ensures that the index of the Dirac operator on $X_2$ is zero, which implies that $\degSone \mu_1 = e(H_1^+)$, the Euler class of $H_1^+$. Assumption (3) guarantees that there is a unique choice of chamber for $\mu$. Given these three assumptions, Theorem \ref{Tintro:FSW-CSF} reduces to 
\begin{align*}
    SW^{\mu, \phi}_m(\alpha) &= SW(X_1, \sfrak_{X_1}) \cdot \< \alpha \cup e(H_1^+), [B]\>.
\end{align*}
Here $\alpha \in H^s(B)$ where $s = \dim B - b^+(X_2)$. Hence Theorem (\ref{Tintro:FSW-CSF}) is a significant generalisation of Baraglia-Konno's formula. It is able to accommodate chambers and handle cases where $\Jcal(X_2)$ and $\degSone(\mu_1)$ are non-trivial. Moreover, in the case that one of the fibres $X_i$ is $\CPtwobar$, the families blow-up formula \cite{FamiliesBlowupFormula} is recovered.

\section{Families Seiberg-Witten invariants}\label{Ch:CohomologicalFormulation}

\subsection{Generalised monopole maps}

Let $H_\C, H_\C' \to B$ be locally trivial, separable, complex Hilbert bundles over a compact, smooth, oriented manifold $B$. Let $S^1$ act orthogonally on $H_\C$ and $H_\C'$ by fibrewise multiplication. Similarly, let $H_\R, H_\R' \to B$ be real Hilbert bundles on which $S^1$ acts trivially and set 
\begin{align*}
    H' &= H'_\C \oplus H'_\R \nonumber\\ 
    H &= H_\C \oplus H_\R.
\end{align*}
In \cite{BaragliaKonnoBFandSW}, Baraglia-Konno define a (generalised) families monopole map to be a smooth equivariant bundle map $f : H' \to H$ such that
\begin{enumerate}
    \item (Fredholm) The map $f$ decomposes as $f = l + c$ where $l : H' \to H$ is a smooth, $S^1$-equivariant, fibrewise linear Fredholm map and $c : H' \to H$ is an $S^1$-equivariant family of smooth compact maps, not necessarily linear.
    \item (Boundedness) The pre-image under $f$ of any disk bundle in $H$ is contained in a disk bundle in $H'$.
    \item (Injectivity) Since $l$ is linear and equivariant, we can write $l = l_\C \oplus l_\R : H'_\C \oplus H'_\R \to H_\C \oplus H_\R$. We require that $l_\R : H'_\R \to H_\R$ is injective.
    \item (Vanishing) For any $v \in H'_\R$, we have $c(v) = 0$.
    \item (Quadratic) There exists a smooth bilinear map $q : H' \times_B H' \to H$ such that $c(v) = q(v,v)$.
\end{enumerate}
\begin{example}\label{Ex:FamMu}
    Let $E \to B$ be a 4-manifold family with fibre $X$. Choose a \spinc structure on the vertical tangent bundle $T(E/b)$ and fix an integer $k \geq 4$. Then as in \cite[\S 3.2]{Part1}, the families Seiberg-Witten monopole map $\mu : \Acal \to \Ccal$ is a map of bundles over the Jacobian bundle $\Jcal$ with 
    \begin{align*}
        \Acal_\C = L^2_k(E, W^+), &\qquad \Acal_\R = L^2_k(E, T^*(E/B)) \oplus \R^n \nonumber\\
        \Ccal_\C = L^2_k(E, W^-), &\qquad \Ccal_\R = L^2_k(E, \Lambda^2_+ T^*(E/B) \oplus \R) \oplus \Hcal^1(\R).
    \end{align*}
    For a reference connection $A_0$, recall that an element $\theta \in \Jcal$ defines another connection $A_\theta = A_0 + i\theta$. For each $\theta \in \Jcal$, the map $\mu^\theta$ is defined by
    \begin{align}
        \mu^\theta(\psi, a, f) &= (D_{A_\theta + ia}\psi, -i F^+_{A_0 + ia} + d^+ a + iF^+_{A_0}, d^* a, \pr(a)).
    \end{align}
    Note that this formula differs slightly from the definition of $\mu$ in \cite{Part1} in that we have added a constant $+ i F^+_{A_0}$ term. This only shifts the level sets of $\mu$, but the change is made to ensure that $\mu$ satisfies the vanishing condition above. There is a decomposition $\mu^\theta = l^\theta + c^\theta$ with 
    \begin{align*}
        l^\theta(\psi, a, f) &= (D_{A_\theta}\psi, d^+ a, d^* a + f, \pr(a))\\
        c^\theta(\psi, a, f) &= (ia \cdot \psi, i\sigma(\psi), 0,0).
    \end{align*}
    In particular $l^\theta_\C(\psi) = D_{A_\theta}\psi$ and $l^\theta_\R(a, f) = (d^+ a, d^* a + f, \pr(a))$. It is clear that $l_\R$ is injective since if $d^+ a = 0$ and $d^* a = 0$, then $a$ is harmonic and $\pr(a) = a$. The fact that $c$ satisfies the quadratic condition was shown in \cite{BF1}.
\end{example}
Since $l_\R : H'_\R \to H_\R$ is injective, we can identify $l_\R(H'_\R) \subset H_\R$ with $H'_\R$ and assume that $l_\R$ is the inclusion. Let $H^+ = H'_\R/H_\R$, which is a vector bundle that can be identified with the cokernel of $l_\R$. Since $l_\R$ is injective and Fredholm, we have that the virtual index bundle $\ind l_\R$ is equal to $H^+$. Let $b^+$ denote the index of $l_\R$ so that $\dim(H^+) = b^+$.
\begin{definition}\label{D:ChamberFSW}
    A chamber for a families monopole map $f : H' \to H$ is a homotopy class of section $\eta : B \to (H_\R - H'_\R)$. Denote the set of chambers of $f$ by $\CH(f)$.
\end{definition}
\begin{remark}\label{R:transverseAssumps}
    In order for a map $f : H' \to H$ to be a generalised families monopole map, there is a further transversality assumption that needs to be made about the chambers of $f$ (see (M6) in \cite{BaragliaKonnoBFandSW}). In ordinary Seiberg-Witten theory, this condition is fulfilled by the existence of regular perturbations.
\end{remark}
Note that $\CH(f)$ can sometimes be empty. The set of chambers $\CH(f)$ is equivalent to the set of non-vanishing homotopy classes of sections of $H^+$, which is also equivalent to the homotopy classes of sections of the unit sphere bundle $S(H^+)$. If $b^+ = 0$, then $H^+ = 0$ and no chamber exists, which is analogous to the case that $H^2_+(X ; \R) = 0$ in ordinary Seiberg-Witten theory. If $b^+ \geq 1 + \dim B$, then a chamber is guaranteed to exist and if $b^+ > 1 + \dim B$, there is a unique choice of chamber.

Suppose that $\eta : B \to (H_\R - H_\R')$ is a representative of a chamber and assume that $\eta$ is transverse to $f$. Let $\tMcal_\eta = f\inv(\eta)$, which is a smooth manifold equipped with a free $S^1$-action. It can be shown using the monopole map axioms above that $\tMcal_\eta$ is compact and finite dimensional with dimension $\ind(l) + \dim B$. The quotient $\Mcal_\eta = \tMcal_\eta/S^1$ is thus a compact $\ind(l) + \dim B - 1$ dimension manifold. It is also a fibre bundle over $B$, with projection map $\pi_{\Mcal_\eta} : \Mcal_\eta \to B$ defined by $\pi_{\Mcal_\eta} = p \circ f$. We will assume that the determinant line bundle $\det(\ind l_R)$ is orientable, although this assumption is not strictly necessary. Fix an orientation of $\det(\ind l_R)$, which uniquely determines an orientation of $\Mcal_\eta$ \cite[Lemma 2.6]{BaragliaKonnoBFandSW}. Let $\Lcal \to \Mcal_\eta$ be the associated complex line bundle of the principle $S^1$-bundle $\tMcal_\eta \to \Mcal_\eta$ and denote by $c_1(\Lcal)$ its first chern class. 

\begin{definition}
    For any integer $m \geq 0$ and $[\eta] \in \CH(f)$, the $m$th families Seiberg-Witten invariant of a monopole map $f : H' \to H$ is 
    \begin{align}\label{E:SWmDef}
        \SW^{f,\eta}_m &= (\pi_{\Mcal_\eta})_*(c_1(\Lcal)^m) \in H^{2m-(\ind l - 1)}(B ; \Z).
    \end{align}
\end{definition}
Here the `wrong-way' map $(\pi_{\Mcal_\eta})_* : H^*(\Mcal_\eta ; \Z) \to H^{* - (\ind l - 1)}(B ; \Z)$ is defined using \Poincare duality and the pushforward map in homology. In de Rahm cohomology, this map is given by integration along the fibres. Lemma 2.9 in \cite{BaragliaKonnoBFandSW} illustrates that $\SW^{f,\eta}_m$ depends only on the homotopy class of $\eta$. In the case that $\det(\ind l_\R)$ is not orientable, a local coefficient system can be used instead of integer coefficients.

\subsection{Finite dimensional monopole maps}

Let $V, V' \to B$ be finite dimensional complex vector bundles of rank $a$ and $a'$ respectively on which $S^1$ acts by scalar multiplication. Similarly, let $U, U' \to B$ be real vector bundles of rank $b$ and $b'$ on which $S^1$ acts trivially. Let $S_{V,U} = S(\R \oplus V \oplus U)$ denote the unit sphere in $\R \oplus V \oplus U$ and note that $S_{V,U} = S_V \wedge S_U$. Identify $B \subset S_{V, U}$ with the image of the section at infinity. Let $f : S_{V', U'} \to S_{V, U}$ be an equivariant map which fixes $B$. 

For any finite dimensional complex vector bundle $A \to B$, define the stabilisation of $V$ by $A$ to be the Whitney sum $V \oplus A$. Similarly, for $C \to B$ a finite dimensional real vector bundle, the stabilisation of $U$ by $C$ is $U \oplus C$. An immediate consequence is that $S_{V' \oplus A, U' \oplus C} = S_{V', U'} \wedge S_{A, C}$, hence the stabilisation of $f$ by $A \oplus C$ is 
\[
    f \wedge \id_{A \oplus C} : S_{V', U'} \wedge S_{A, C} \to S_{V, U} \wedge S_{A, C}. 
\]
One way to obtain a map $f : S_{V', U'} \to S_{V, U}$ is to restrict a monopole map $g : H' \to H$ to a finite dimensional subspace $V' \oplus U' \subset H'$. In this case, the stable homotopy class of $f = g|_{S_{V', U'}}$ is independent of the choice of subspace $V' \oplus U'$ \cite[Definition 2.12]{Part1}. That is, for any other subspace $(V' \oplus A) \oplus (U' \oplus C)$, it is the case that $g|_{S_{V' \oplus A, U' \oplus C}}$ is stably homotopic to $g|_{S_{V', U'}} \wedge \id_{A \oplus C}$.

For $f : S_{V',U'} \to S_{V, U}$, define virtual bundles $D = V' - V \in K^0(B)$ and $H^+ = U - U' \in KO^0(B)$ with $d = a' - a$ and $b^+ = b - b'$. In the case that $f$ arises from the finite dimensional approximation of a Seiberg-Witten monopole map, $D$ is the virtual index bundle of the Dirac operator and $H^+$ is trivial with fibre $H^2_+(X ; \R)$. In this setting, $f|_{U'}$ is injective and $U'$ is identified as a subbundle of $U$ with $f|_{U'}$ the inclusion.
\begin{definition}\label{D:FDMonopoleMap}
    An $S^1$-equivariant map $f : (S_{V', U'}, B_{V', U'}) \to (S_{V, U}, B_{V, U})$ is a finite dimensional monopole map if $f|_{U'}$ is injective.
\end{definition}
Let $f : S_{V', U'} \to S_{V, U}$ be a finite dimensional monopole map. After stabilisation, we can assume that $U = U' \oplus H^+$ and that $H^+$ is a genuine vector bundle, with $f|_{U'}$ the inclusion.
\begin{definition}
    A chamber for a finite dimensional monopole map $f$ is a homotopy class of section $\phi : B \to (U - U')$.
\end{definition}
By the discussion above, a chamber $\phi : B \to (U - U')$ is equivalent to the homotopy class of a non-vanishing section of $H^+$, or a homotopy class of section of the unit sphere bundle $S(H^+) \to B$. Thus this notion of chamber is equivalent to Definition \ref{D:ChamberFSW} and we can denote set of chambers for $f$ by $\CH(f)$. 

Fix a chamber $\phi \in \CH(f)$. In the same manner as before, we will assume that $f$ is transverse to $\phi$ so that $\tMcal_\phi = f\inv(\phi(B))$ is a compact smooth submanifold of $V' \oplus U'$. Since $f$ is equivariant and $\tMcal_\phi$ is disjoint from $U'$, there is an induced free $S^1$-action on $\tMcal_\phi$. This means that $\Mcal_\phi = \tMcal_\phi/S^1$ is a compact smooth manifold of dimension $(2d - b^+) + \dim(B) - 1$. The space $\Mcal_\phi$ is fibred over $B$ with $\pi_{\Mcal_\phi} : \Mcal_\phi \to B$ the projection and $\tMcal_\phi \to \Mcal_\phi$ is a principle $S^1$-bundle with associated complex line bundle $\Lcal = \tMcal_\phi \times_{S^1} \C \to \Mcal$. Assume that $\det(\ind l|_{U'})$ is oriented, which determines an orientation on $\Mcal_\phi$.
\begin{definition}\label{D:FDSWFormula}
    For any integer $m \geq 0$ and $[\phi] \in \CH(f)$, the $m$th families Seiberg-Witten invariant of a finite dimensional monopole map $f$ is 
    \[\SWfphi_m = (\pi_{\Mcal_\phi})_*(c_1(\Lcal)^m) \in H^{2m - (2d - b^+ - 1)}(B ; \Z).\]
\end{definition}
\begin{proposition}[\cite{BaragliaKonnoBFandSW} Proposition 2.18]
    The invariant $\SWfphi_m$ depends only on the homotopy class of the chamber $\phi$ and the stable homotopy class of the finite dimensional monopole map $f$.
\end{proposition}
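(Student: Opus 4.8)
The plan is to verify invariance in three separate steps, each of which reduces to the fact that the fibrewise pushforward $(\pi_{\Mcal})_*$ is a cobordism invariant and that $c_1(\Lcal)$ is natural under equivariant bundle maps. The three steps are: (i) independence of the chosen representative of a homotopy class of chamber $\phi$; (ii) invariance under a homotopy of $f$ through finite dimensional monopole maps; and (iii) invariance under stabilisation of $f$ by a pair $(A,C)$ of a complex and a real vector bundle. Since a chamber is precisely a homotopy class of section of $S(H^+)$ and a finite dimensional monopole map always satisfies $f|_{U'} = \mathrm{incl}$ after stabilisation, steps (i)--(iii) together cover every ambiguity in the definition of $\SWfphi_m$.

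For (i) and (ii) I would treat both at once. Given a homotopy $(f_t,\phi_t)$, $t\in[0,1]$, of pairs of a finite dimensional monopole map and a chamber representative (for (i) take $f_t\equiv f$; for (ii) take $\phi_t\equiv\phi$), first apply equivariant transversality for maps fixing $B$ to perturb the homotopy rel endpoints so that the total locus $\widehat\Mcal = \{(x,t)\in S_{V',U'}\times[0,1] : f_t(x)=\phi_t(p(x))\}$ is cut out transversally. Using the boundedness axiom together with the fact that each $\phi_t$ takes values in the complement $U - U'$, one checks that $\widehat\Mcal$ is compact, disjoint from $U'$ (so the induced $S^1$-action is free) and has boundary $\tMcal_{\phi_0}\sqcup(-\tMcal_{\phi_1})$. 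The principal $S^1$-bundle over $\widehat\Mcal/S^1$ has associated line bundle $\widehat\Lcal$ restricting to $\Lcal_0$ and $\Lcal_1$ on the two ends, and the fixed orientation of $\det(\ind l|_{U'})$ orients $\widehat\Mcal/S^1$ compatibly with the boundary orientations (apply \cite[Lemma 2.6]{BaragliaKonnoBFandSW} to the index bundle over $B\times[0,1]$). Then $\widehat\Mcal/S^1$ is an oriented bordism over $B$ between $\Mcal_{\phi_0}$ and $\Mcal_{\phi_1}$ carrying the closed class $c_1(\widehat\Lcal)^m$, so a Stokes argument for integration along the fibre shows $(\pi_{\Mcal_{\phi_0}})_*c_1(\Lcal_0)^m$ and $(\pi_{\Mcal_{\phi_1}})_*c_1(\Lcal_1)^m$ are cohomologous, i.e. $\SW^{f_0,\phi_0}_m=\SW^{f_1,\phi_1}_m$.

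For (iii) I would show directly that stabilisation leaves all the moduli data unchanged. Stabilising by a complex bundle $A$ replaces $f$ by $f\wedge\id_A : S_{V'\oplus A,U'}\to S_{V\oplus A,U}$; writing a point as $(v',a,u')$, the equation $(f\wedge\id_A)(v',a,u')=\phi(p(\,\cdot\,))$ forces the $A$-component to vanish and reduces to $f(v',u')=\phi(\,\cdot\,)$, so $\tMcal_\phi$, its free $S^1$-action and the line bundle $\Lcal$ are unchanged. The virtual index $D=V'-V$ gains $A$ on both sides and is unchanged in $K^0(B)$, hence $d$, $b^+$ and the target degree $2m-(2d-b^+-1)$ are unchanged, and the canonical identification of determinant lines respects the chosen orientation. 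The same argument applies to stabilisation by a real bundle $C$ with trivial $S^1$-action, which enters as part of the inclusion $U'\hookrightarrow U$: its component again contributes trivially to $\tMcal_\phi$ and $\ind l|_{U'}$ changes by $C-C=0$. Combining (i)--(iii) with the stable invariance of the class of $f$ recorded in \cite[Definition 2.12]{Part1} yields the proposition.

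The step I expect to be the main obstacle is the transversality-and-compactness input behind (i) and (ii): making precise that a generic homotopy within the class of $S^1$-equivariant maps that fix $B$ and are injective on $U'$ meets the section $\phi_t(B)$ transversally, and that the resulting parametrised moduli space stays compact and uniformly bounded away from both $B$ and $U'$. The first point is a Sard--Smale argument in the equivariant category (the $S^1$-action being free off $U'$ causes no difficulty, and the perturbations can be chosen compactly supported away from $B\cup U'$ so as to preserve the monopole-map structure), and the second rests on the boundedness axiom together with the requirement that chamber representatives avoid $U'$. Once these are in place, together with the continuity of the index bundle over the parameter space that propagates the single fixed orientation of $\det(\ind l|_{U'})$ to every moduli space in sight, the remaining arguments are the formal cobordism and naturality manipulations described above.
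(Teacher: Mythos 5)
The paper does not reproduce a proof of this proposition---it is cited directly from \cite{BaragliaKonnoBFandSW}---so strictly speaking there is no in-paper argument to compare against. Your cobordism-theoretic proof is the standard one and is almost certainly what Baraglia--Konno's Proposition 2.18 does: homotope $(f_t,\phi_t)$, cut out a parametrised moduli space, use boundedness for compactness, the disjointness of $\phi_t(B)$ from $U'$ for freeness, transversality to make it a manifold with boundary, and orientation continuity from the index bundle over $B\times[0,1]$; stabilisation is handled separately by noting the extra factors are forced to vanish on the moduli locus. All of the steps you flag as potential obstacles (equivariant transversality away from the fixed locus, compactness of the parametrised moduli space) do go through in the finite-dimensional setting, and you supply the right reasons. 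One small point worth tightening: the degree bookkeeping in (iii) and the claim that transversality survives stabilisation should be stated explicitly, since the differential of $f\wedge\mathrm{id}_A$ is $Df\oplus\mathrm{id}_A$ and one must observe it still surjects onto the normal bundle of $\phi(B)$; this is easy but not quite automatic from ``the $A$-component vanishes.''

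It is worth knowing, though, that the paper's own ecosystem offers a cleaner route to the homotopy-invariance half: the cohomological reformulation $\SWfphi_m = (\pi_Y)_*(x^m\gamma(f^*\phi_*(1)))$ of Theorem 3.6 depends on $f$ and $\phi$ only through $f^*\phi_*(1)$, which is manifestly a homotopy invariant, so no cobordism or transversality input is needed for that part. That formula does not immediately give stabilisation invariance, since $Y$ is built from $V,U$, so your step (iii) (or an equivalent Thom-class computation) is still required. Your proof is therefore correct and self-contained, but the equivariant-cohomology formulation trades the analytic transversality bookkeeping in (i)--(ii) for an algebraic identity, which is the perspective the paper is steering towards.
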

Further, it was noted above that a finite dimensional monopole map can be obtained by restricting a monopole map to a finite dimensional subspace. To do this, let $f = l + c : H' \to H$ be a monopole map and let $V \oplus U \subset H$ be an admissible $S^1$-invariant subbundle as in \cite[Definition 2.10]{Part1}. Set $V' = l_\C\inv(V)$ and $U' = l_\R\inv(U)$. Then the restriction $f|_{V' \oplus U'}$ extended to a map of sphere bundles $f_{V', U'} : S_{V',U'} \to S_{V,U}$ is a finite dimensional monopole map. Let $\eta : B \to (H - H')$ be a chamber for $f$ which uniquely determines a chamber $\phi : B \to (U - U')$ for $f_{V', U'}$.
\begin{theorem}[\cite{BaragliaKonnoBFandSW} Theorem 2.24]
    The Seiberg-Witten invariants $SW^{f, \eta}_m$ and $SW^{f_{V', U'}, \phi}_m$ are equal.
\end{theorem}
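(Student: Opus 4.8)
The plan is to produce an $S^1$-equivariant, fibre-preserving, orientation-compatible diffeomorphism between the two moduli spaces and then read off that the push-forwards of $c_1(\Lcal)^m$ along $B$ coincide. Write $W = V \oplus U \subset H$ and $W' = l\inv(W) \subset H'$, and fix fibrewise orthogonal splittings $H = W \oplus W^\perp$ and $H' = W' \oplus (W')^\perp$. Admissibility of $W$ should mean that $\ker l \subset W'$, that $H^+ \subset U$, that $l(H') + W = H$, and --- the essential point --- that $W$ is large enough that the compact part $c$ (together with its derivative) is suitably small on the bounded region of $H'$ supplied by the boundedness axiom, i.e.\ the region that forces $\tMcal_\eta$ to be compact. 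In particular $\ind l = 2d - b^+$, so $SW^{f,\eta}_m$ and $SW^{f_{V',U'},\phi}_m$ a priori live in the same cohomology group of $B$, and both moduli spaces carry a free $S^1$-action since the vanishing axiom keeps $f\inv(\eta)$ and $f_{V',U'}\inv(\phi)$ disjoint from the respective fixed loci $H'_\R$ and $U'$.

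The heart of the matter is a Kuranishi-type finite-dimensional reduction. As $\eta$ takes values in $H_\R - H'_\R \subset U \subset W$, the equation $f(x) = \eta(p(x))$ splits into $\pr_{W^\perp}f(x) = 0$ together with $\pr_W f(x) = \eta(p(x))$. By admissibility, $\pr_{W^\perp}\circ l$ has kernel exactly $W' = l\inv(W)$ and, restricted to $(W')^\perp$, is a bounded isomorphism onto $W^\perp$ (surjectivity is precisely $l(H') + W = H$). Combined with compactness of $c$ and the a priori bound on $\tMcal_\eta$, the implicit function theorem solves $\pr_{W^\perp}f(x) = 0$ uniquely for the $W^\perp$-component of $x$ as a smooth, $S^1$-equivariant, fibre-preserving map $x_{W^\perp} = G(x_{W'})$; note $G$ vanishes on $W' \cap H'_\R$ because $c$ does there. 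Substituting, $\tMcal_\eta$ is identified with the level set over $\phi$ of the reduced map $\hat f := \pr_W \circ f \circ (\mathrm{id}+G)$, which extends to a map of sphere bundles $S_{V',U'} \to S_{V,U}$. One then checks that $\hat f$ is a finite-dimensional monopole map as in Definition \ref{D:FDMonopoleMap} --- its linear part is $\pr_W \circ l|_{W'}$, so $\hat f|_{U'}$ is still the inclusion --- and that the straight-line family $\pr_W \circ f \circ (\mathrm{id}+tG)$, $t \in [0,1]$, connects $\hat f$ to $f_{V',U'}$ through such maps; by the invariance results of \cite[Definition 2.12]{Part1} and \cite[Proposition 2.18]{BaragliaKonnoBFandSW} this yields $SW^{\hat f,\phi}_m = SW^{f_{V',U'},\phi}_m$.

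Finally, $\mathrm{id} + G$ restricts to a diffeomorphism from the level set of $\hat f$ over $\phi$ onto $\tMcal_\eta$; it commutes with the free $S^1$-actions and covers the identity on $B$, so it descends to a diffeomorphism of the $S^1$-quotients over $B$ under which the principal $S^1$-bundles, hence the associated line bundles $\Lcal$ and their first Chern classes, correspond, and $\pi_{\Mcal_\phi}$ corresponds to $\pi_{\Mcal_\eta}$. Since the reduction carries $\det(\ind l_\R)$ to $\det(\ind l|_{U'})$ compatibly (using that $G$ is trivial on the real part), the chosen orientations match as well, and therefore $(\pi_{\Mcal_\eta})_*(c_1(\Lcal)^m) = (\pi_{\Mcal_\phi})_*(c_1(\Lcal)^m)$, that is, $SW^{f,\eta}_m = SW^{\hat f,\phi}_m = SW^{f_{V',U'},\phi}_m$. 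The step I expect to be the main obstacle is the reduction itself: securing the uniform estimates on $c$ and its derivative over the compact base $B$, on the relevant bounded region, that let the implicit function theorem run in the non-compact $W^\perp$-directions --- exactly what ``admissible'' must be strong enough to guarantee --- and then checking that the homotopy $\pr_W \circ f \circ (\mathrm{id}+tG)$ stays within the class of finite-dimensional monopole maps, i.e.\ remains bounded throughout, so that the earlier invariance statements apply.
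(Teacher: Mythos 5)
The paper does not prove this statement: it is cited verbatim as \cite{BaragliaKonnoBFandSW} Theorem~2.24 and is followed immediately by the start of the next section, with no proof environment. So there is no internal argument to compare against; the only possible comparison is with Baraglia--Konno's own paper.

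That said, the finite-dimensional reduction you propose is the standard route and agrees in spirit with what \cite{BaragliaKonnoBFandSW} do, and you have identified where the real work lies. The splitting of $f(x)=\eta$ into a $W^\perp$-equation solved by the implicit function theorem and a residual $W$-equation, with the solution operator $G$ vanishing on the real fixed locus so that $\hat f|_{U'}$ stays the inclusion and the real index datum (hence the orientation of $\det(\ind l_\R)$ versus $\det(\ind l|_{U'})$) is carried along, is correct in outline. Two technical points need to be made explicit before this is complete. First, $G$ is produced only on the a priori bounded region supplied by the boundedness axiom, so it must be cut off before $\hat f$ and the straight-line homotopy $\mathrm{pr}_W\circ f\circ(\mathrm{id}+tG)$ can be interpreted as maps of sphere bundles $S_{V',U'}\to S_{V,U}$; one then has to verify that the cutoff can be arranged so that no solutions over $\phi$ are created or destroyed near the cutoff locus. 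Second, the invariance results you invoke require the homotopy to remain within the class of finite-dimensional monopole maps, with preimages of disk bundles contained in disk bundles for every $t\in[0,1]$, not merely at the endpoints; this is the estimate that admissibility has to deliver, and you rightly flag it as the crux, but as written it is asserted rather than proved.
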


\newpage
\section{Formulation in equivariant cohomology}

In \cite{BaragliaKonnoBFandSW}, Baraglia-Konno reformulate the Seiberg-Witten invariant $\SWfphi$ as a map in $S^1$-equivariant cohomology. One advantage of doing so is that this formulation describes the Seiberg-Witten invariant without explicitly referring to the moduli space $\Mcal$. Instead, it focuses attention on the homotopy class of $f$. Consequently, this definition complements the families Bauer-Furuta invariant as defined in \cite[Definition 2.12]{Part1}.

\subsection{Equivariant cohomology}

Let $G$ be a continuous Lie group and $X$ a topological space with a continuous left $G$-action. For ease of notation, we will assume $\Z$ coefficients for all cohomology groups following, although orientability issues can be overcome by using local coefficient systems. Recall that the Borel model of the equivariant cohomology group $H^*_{G}(X)$ is given by
\begin{align*}
    H^*_{G}(X) = H^*(EG \times_G X).
\end{align*}
Here $EG$ is a contractible space on which $G$ acts freely, with $BG = EG/G$ the classifying space of $G$. The spaces $EG$ and $BG$ are defined uniquely up to homotopy. The homotopy quotient $EG \times_G X$ is the quotient $(EG \times X)/G$ where $G$ acts on $(e, x) \in EG \times X$ by $g \cdot (e, x) = (eg, g\inv x)$. In the case that $G$ acts freely on $X$, $H^*_{G}(X) = H^*(X/G)$ and if $G$ acts trivially on $X$, then $H^*_{G}(X) = H^*(BG \times X)$. For more detail, see \cite{Bott-EqCohom}.

For our purposes, $G$ we will always be a product of circles. Let $G = S^1$ so that $BG = \CP^\infty$ and $EG = S^\infty$. Recall that the cohomology ring of $\CP^\infty$ with coefficients in $\Z$ is $\Z[x]$ with $x$ having degree 2 \cite{Hatcher}. Suppose that $\pi : E \to B$ is a fibre-bundle with a fibre-preserving left $G$-action on $E$. We treat $B$ as a $G$-space with trivial action. Then $\HS^*(B) = H^*(\CP^\infty \times B) = H^*(B)[x]$ and $\HS^*(E)$ is a module over $H^*(B)[x]$. The module structure is defined by the pullback $\pi^*$. For $\alpha \in H^*(B)[x]$ and $\theta \in \HS^*(E)$, we will often abuse notation by writing $\alpha \cdot \theta$ to denote $\pi^*(\alpha) \wedge \theta$. The pullback of an equivariant bundle map $f : E' \to E$ is a $H^*(B)[x]$-module morphism $f^* : \HS^*(E) \to \HS^*(E')$.

Now let $\pi : W \to B$ be a rank $k$ oriented real vector bundle. Recall that $S_W$ denotes the unit sphere bundle of $\R \oplus W$ with $B \subset S$ the image of the section at infinity. In ordinary cohomology, the Thom isomorphism $H^*(B) \to H^{*+k}(S_W, B)$ is given by $\alpha \mapsto \pi^*(\alpha) \smile \Phi$, where $\Phi \in H^k(S_W, B)$ is the Thom class of $W$ \cite[$\S 6$]{Bott-Tu}. The Thom class is defined by the property that the restriction of $\Phi$ to each fibre $(S_W)_b$ is the positive generator of $H^k((S_W)_b) = \Z$. The given orientation of $W$ determines $\Phi$ uniquely. The Euler class $e(W) \in H^k(B)$ is the pullback of $\Phi$ by the zero section $\zeta : B \to S_W$. The `wrong-way' map $\pi_* : H^{*+k}(S_W, B) \to H^*(B)$ is the inverse of the Thom isomorphism, which is equivalent to integration over the fibre when using real coefficients.

The same results translate directly to equivariant cohomology. Let $W_G = EG \times W$ and $B_G = EG \times_G B$ so that $W_G \to B_G$ is an oriented equivariant vector bundle with fibre $W$. Now $H_G^*(S_W, B) = H^*(S_{W_G}, B_{W_G})$ and the equivariant Thom class $\tau_W \in H_G^k(S_W, B)$ is defined to be the ordinary Thom class of $W_G \to B_G$. As before, the equivariant Thom isomorphism $H_G^*(B) \to H_G^{*+k}(S_W, B)$ is induced by taking the cup product with $\tau_W$. Similarly, the equivariant Euler class $e(W) \in H_G^k(B)$ is just the pullback of $\tau_W$ by the zero section. Once again the `wrong-way' map $\pi_* : H_G^{*+k}(S_W, B) \to H_G^*(B)$ is the inverse of the Thom isomorphism. More generally, if $f : M \to N$ is an equivariant map of oriented $G$-manifolds with $k = \dim M - \dim N$, then $f_* : H_G^*(M) \to H_G^{*-k}(N)$ can be defined by factoring $f$ as an inclusion followed by a projection \cite{AtiyahEqCohom}.

\subsection{Cohomology of sphere bundles}

Let $V \to B$ be a complex vector bundle of rank $a$ and $U \to B$ be a real oriented vector bundle of rank $b$. Recall that $S_{V, U} \to B$ denotes the unit sphere bundle in $\R \oplus V \oplus U$ and $B \subset S_{V, U}$ is the image of the section at infinity. The inclusions $V \subset V \oplus U$ and $U \subset V \oplus U$ extend to inclusions $S_V \subset S_{V,U}$ and $S_U \subset S_{V,U}$. The orientation of $U$ induces an orientation of $S_{V,U}$ since $V$ has a natural orientation determined by its complex structure. As discussed above, the Thom isomorphism gives $\HS^*(S_{V,U}, B)$ the structure of a free rank one $\HS^*(B)$-module generated by the equivariant Thom class $\tau_{V,U} \in \HS^*(S_{V,U}, B)$.

Let $e(V \oplus U) \in \HS^{2a + b}(B)$ denote the equivariant Euler class, which is the pullback of $\tau_{V,U} \in \HS^{2a + b}(S_{V, U}, B)$ by the zero section. In $\HS^{*}(B)$, $e(V \oplus U)$ factors as $e(V \oplus U) = e(V) \cdot e(U)$. Here $e(V)$ is the equivariant Euler class of $V$ and $e(U)$ is just the ordinary Euler class of $U$ since $S^1$ acts trivially on $U$. Let $\alpha_i(V) \in H^{2}(B)$ be the ordinary chern roots of $V$. Since $S^1$ acts by scalar multiplication on $V$, the equivariant Chern roots of $V$ are $\alpha_i(V) + x$ for $1 \leq i \leq a$, where $x$ is the generator of $\HS^*(B) = H^*(B)[x]$. By the splitting principle \cite{Milnor-CharClasses}, it follows that the equivariant Euler class is given by
\begin{align}\label{E:EquivEulerClass}
    e(V) &= x^a + c_1(V)x^{a-1} + c_2(V)x^{a-2} + ... + c_a(V).
\end{align}
Notice that this is a monic polynomial and hence is not a zero divisor in $\HS^*(B)$ so long as $a \geq 1$. If $a = 0$, we of course have $e(V) = 0$.

Let $\pi_{U} : S_U \to B$ denote the projection onto $B$. The pullback bundle $\pi_{U}^*(V) \to S_U$ is a subbundle of $S_{V, U}$ which can be identified with the normal bundle of $S_U \subset S_{V,U}$. 
 
\begin{lemma}\label{L:ThomClassPullback}
    Let $i_V : S_V \to S_{V,U}$ and $i_U : S_U \to S_{V,U}$ denote inclusion maps with pullbacks $i_V^* : \HS^*(S_{V,U}, B) \to \HS^*(S_V, B)$ and $i_U^* : \HS^*(S_{V,U}, B) \to \HS^*(S_U, B)$. Then 
    \begin{align*}
        i_V^*(\tau_{V,U}) &=  \tau_V \cdot e(U)\\
        i_U^*(\tau_{V,U}) &=  e(V) \cdot \tau_U.
    \end{align*}
\end{lemma}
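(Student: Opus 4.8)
First I would settle the identity $i_V^*(\tau_{V,U})=\tau_V\cdot e(U)$, which comes quickly from the tools already assembled. By the equivariant Thom isomorphism $\HS^*(S_V,B)$ is free of rank one over $\HS^*(B)$ on $\tau_V$, so $i_V^*(\tau_{V,U})=\alpha\cdot\tau_V$ for a unique $\alpha\in\HS^b(B)$. The zero section $\zeta_V:B\to S_V$ satisfies $\zeta_V^*(\tau_V)=e(V)$, and $i_V\circ\zeta_V$ is the zero section $\zeta_{V,U}:B\to S_{V,U}$, so
\begin{align*}
    \alpha\cdot e(V)\;=\;\zeta_V^*\bigl(i_V^*(\tau_{V,U})\bigr)\;=\;\zeta_{V,U}^*(\tau_{V,U})\;=\;e(V\oplus U)\;=\;e(V)\cdot e(U).
\end{align*}
If $a\geq1$ then $e(V)$ is monic in $x$ by \eqref{E:EquivEulerClass}, hence not a zero divisor in $\HS^*(B)=H^*(B)[x]$, and cancelling gives $\alpha=e(U)$; if $a=0$ then $V=0$, $\tau_V=1$, $e(V)=1$ and the claim is immediate.

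The same trick does not prove $i_U^*(\tau_{V,U})=e(V)\cdot\tau_U$: running it with the roles of $V$ and $U$ interchanged yields only $\beta\cdot e(U)=e(V)\cdot e(U)$, and since $e(U)\in H^*(B)$ need not be a non-zero divisor, this does not determine $\beta$. This is the real content of the lemma, and for it I would invoke the multiplicativity of the Thom class under Whitney sum. Because the Borel construction commutes with sphere bundles, direct sums and Thom classes, it suffices to argue non-equivariantly; I drop the Borel subscripts. Both $i_V$ and $i_U$ carry the section at infinity of their source into that of $S_{V,U}$, so on relative cohomology they are computed by the induced maps of Thom spaces $\mathrm{Th}(V)\to\mathrm{Th}(V\oplus U)$ and $\mathrm{Th}(U)\to\mathrm{Th}(V\oplus U)$, where $\mathrm{Th}(W):=S_W/B$.

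Next I would use the canonical identification of $\mathrm{Th}(V\oplus U)$ with the fibrewise smash $\mathrm{Th}(V)\wedge_B\mathrm{Th}(U)$ — whose orientation is the product of the complex orientation on the $V$-summand and the given orientation of $U$ — under which the Thom class is the fibrewise external product $\tau_{V,U}=\tau_V\times\tau_U$; this is the standard Whitney-sum formula \cite{Bott-Tu, Milnor-CharClasses} (provable by restriction to a fibre together with the product orientation convention). Reading off from $i_V(t,v)=(t,v,0)$, the map $\mathrm{Th}(V)\to\mathrm{Th}(V\oplus U)$ induced by $i_V$ is $\mathrm{Th}$ applied to the bundle inclusion $v\mapsto(v,0)$, and this factors as the fibrewise smash $\mathrm{id}_{\mathrm{Th}(V)}\wedge_B\iota_U$, where $\iota_U:S^0_B\to\mathrm{Th}(U)$ is induced by the zero section of $U$; here $S^0_B$ is the fibrewise $0$-sphere, $\tilde H^*(S^0_B)=H^*(B)$, and $\iota_U^*(\tau_U)=e(U)$ by definition of the Euler class. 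Symmetrically, the map induced by $i_U$ is $\iota_V\wedge_B\mathrm{id}_{\mathrm{Th}(U)}$. Naturality of the external product now gives $i_V^*(\tau_{V,U})=\tau_V\times\iota_U^*(\tau_U)=\tau_V\times e(U)$ and $i_U^*(\tau_{V,U})=\iota_V^*(\tau_V)\times\tau_U=e(V)\times\tau_U$, which in the module notation $\gamma\cdot\tau=\pi^*(\gamma)\smile\tau$ of the statement are exactly $\tau_V\cdot e(U)$ and $e(V)\cdot\tau_U$; the first agrees with the computation above.

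The main obstacle is twofold. First, recognising that the second identity, unlike the first, cannot be obtained by cancelling an Euler class and genuinely requires the Whitney-sum multiplicativity. Second, the orientation bookkeeping: one must check that the fibrewise smash decomposition, the product orientation, and the factorisations of the two induced maps are all mutually compatible, so that the multiplicativity formula applies exactly, with no stray sign.
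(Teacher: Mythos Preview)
Your proof is correct, and your main argument --- the Whitney-sum multiplicativity $\tau_{V,U}=\tau_V\times\tau_U$ via the fibrewise smash decomposition of Thom spaces, together with the observation that $i_U$ induces $\iota_V\wedge_B\mathrm{id}$ --- is exactly the paper's approach, which phrases the same idea as the external cup product $\tau_{V,U}=\tau_V\smile\tau_U$ and pulls back along $i_U$, homotopic to the zero section of $\pi_U^*(V)$. Your preliminary cancellation argument for $i_V^*$ using that $e(V)$ is a non-zero-divisor is a pleasant extra observation not in the paper, but as you correctly note it is subsumed by the multiplicativity argument.
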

\begin{proof}
    Let $p_1 : (S_V \times_B S_U, B \times_B S_U) \to (S_V, B)$ and $p_2 : (S_V \times_B S_U, S_V \times_B B) \to (S_U, B)$ denote projection maps onto the first and second factor respectively. There is an external cup product defined by 
    \begin{align*}
        \smile : \HS^*(S_V, B) \otimes_{\HS^*(B)}& \HS^*(S_U, B) \to \HS^*(S_{V, U}, B)\\
        \alpha \smile \beta &= p_1^*(\alpha) \cdot p_2^*(\beta).
    \end{align*}
    The multiplication on the right hand side is the ordinary cup product in $\HS^*(S_{V, U}, B)$. Since $\HS^*(S_V, B)$ and $\HS^*(S_U, B)$ are both one-dimensional free $\HS^*(B)$-modules, this external cup product is an isomorphism \cite[Theorem 3.18]{Hatcher}. Under this isomorphism, we can identify $\tau_{V, U} = \tau_V \smile \tau_U$.

    We prove the second formula. The map $i_U$ is homotopy equivalent to the zero section of the bundle $\pi_U^*(V) \to S_U$, hence it follows that 
    \begin{align*}
        i_U^*(\tau_{V,U}) &= i_U^*(\tau_{V} \smile \tau_U)\\
        &= i_U^*(p_1^*(\tau_{V})) \cdot i_U^*(p_2^* \tau_U)\\
        &= e(V) \cdot \tau_U.
    \end{align*}
\end{proof}
Let $\delta : \HS^{*-1}(S_U, B) \to \HS^*(S_{V,U}, S_U)$ be the connecting morphism in the long exact sequence of the triple $(S_{V,U}, S_U, B)$. Recall that the ordinary cohomology of the projective bundle $\P(V) \to B$ is given by $H^*(\P(V)) = H^*(B)[x]/\<e(V)\>$ where $e(V)$ is the equivariant Euler class given by equation $(\ref{E:EquivEulerClass})$.
\begin{lemma}[\cite{BaragliaKonnoBFandSW} Proposition 3.3]\label{L:HSuvCohom}
    Suppose that $a = \rank(V) \geq 1$. Then the cohomology ring $\HS^*(S_{V,U}, S_U)$ is a free rank one $H^*(\P(V))$-module generated by $\delta \tau_U \in \HS^{b+1}(S_{V,U}, S_U)$.
\end{lemma}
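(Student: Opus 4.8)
The plan is to analyse the long exact sequence of the triple $(S_{V,U}, S_U, B)$ in $S^1$-equivariant cohomology and combine it with the module structure coming from the Thom isomorphism. First I would record the relevant portion of the sequence, namely
\[
\cdots \to \HS^{*}(S_{V,U}, S_U) \xrightarrow{\;} \HS^{*}(S_{V,U}, B) \xrightarrow{i_U^*} \HS^{*}(S_U, B) \xrightarrow{\delta} \HS^{*+1}(S_{V,U}, S_U) \to \cdots,
\]
where the map $\HS^{*}(S_{V,U}, B) \to \HS^{*}(S_U, B)$ is the restriction $i_U^*$. By the Thom isomorphism, $\HS^*(S_{V,U}, B)$ is the free rank one $\HS^*(B)$-module on $\tau_{V,U}$, and $\HS^*(S_U,B)$ is the free rank one $\HS^*(B)$-module on $\tau_U$. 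Lemma \ref{L:ThomClassPullback} identifies $i_U^*$ completely: it sends $\tau_{V,U} \mapsto e(V)\cdot \tau_U$, so as a map of free $\HS^*(B)$-modules it is multiplication by $e(V)$. Since $a \geq 1$, equation $(\ref{E:EquivEulerClass})$ shows $e(V)$ is a monic polynomial in $x$ of positive degree, hence a non-zero-divisor in $\HS^*(B) = H^*(B)[x]$. Therefore $i_U^*$ is \emph{injective}, which forces the connecting map $\delta : \HS^{*-1}(S_U, B) \to \HS^*(S_{V,U}, S_U)$ to be surjective, and the other segment of the sequence shows $\HS^*(S_{V,U}, B) \to \HS^*(S_{V,U}, S_U)$ is the zero map.

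Consequently $\HS^*(S_{V,U}, S_U)$ is the cokernel of $i_U^*$, i.e. of multiplication by $e(V)$ on the free rank one $\HS^*(B)$-module generated by $\tau_U$ (up to the degree shift by $1$ coming from $\delta$). Explicitly, $\delta$ induces an isomorphism of $\HS^*(B)$-modules
\[
\HS^*(S_{V,U}, S_U) \;\cong\; \bigl(\HS^{*-1}(B)/e(V)\cdot \HS^{*-1}(B)\bigr)\cdot \delta\tau_U \;=\; \bigl(H^*(B)[x]/\langle e(V)\rangle\bigr)\cdot \delta\tau_U,
\]
which is precisely the statement that $\HS^*(S_{V,U}, S_U)$ is a free rank one $H^*(\P(V)) = H^*(B)[x]/\langle e(V)\rangle$-module on $\delta\tau_U$, sitting in degree $b+1$ since $\tau_U$ has degree $b$. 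To make the module structure over $H^*(\P(V))$ rigorous I would note that $\HS^*(S_{V,U}, S_U)$ is a module over the ring $\HS^*(S_{V,U})$ (relative cup product against absolute classes), that $\delta$ is a map of $\HS^*(B)$-modules by naturality of the long exact sequence, and that the element $x \in \HS^*(B)$ acts on $\delta\tau_U$ through its image in $H^*(\P(V))$ — the relation $e(V)\cdot\delta\tau_U = 0$ being exactly the exactness statement $\operatorname{im}(i_U^*) = \ker\delta$ rewritten via Lemma \ref{L:ThomClassPullback}.

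The main obstacle I expect is bookkeeping rather than conceptual: one must be careful that $\delta$ is genuinely a morphism of $\HS^*(B)$-modules (so that "$e(V)\cdot\delta\tau_U = \delta(e(V)\cdot\tau_U) = \delta(i_U^*\tau_{V,U}) = 0$" is legitimate), and that the quotient module $H^*(B)[x]/\langle e(V)\rangle$ really is free of rank one over itself — the content being that $e(V)$ is a non-zero-divisor so the short exact sequence $0 \to \HS^{*-1}(B) \xrightarrow{e(V)} \HS^{*-1}(B) \to \HS^{*-1}(B)/\langle e(V)\rangle \to 0$ is exact in every degree. An alternative, perhaps cleaner, route would be to directly use the Gysin sequence of the sphere bundle $S(\pi_U^*(V)) \to S_U$: since $S_{V,U}$ deformation retracts onto the fibrewise join structure and $S_U \hookrightarrow S_{V,U}$ has normal bundle $\pi_U^*(V)$, the relative group $\HS^*(S_{V,U}, S_U)$ is the Thom space cohomology of $\pi_U^*(V)$, and the Thom isomorphism together with $H^*_{S^1}(S_U) \cong H^*(B)[x]$ and the relation $e(\pi_U^*V) = e(V)$ delivers the claim — but I would present the long-exact-sequence-of-a-triple argument above as the primary proof since it parallels the cited Proposition 3.3.
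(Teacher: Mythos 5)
Your proposal is correct and follows essentially the same route as the paper's proof: both analyse the long exact sequence of the triple $(S_{V,U}, S_U, B)$, identify the absolute groups via the Thom isomorphism, invoke Lemma \ref{L:ThomClassPullback} to see $i_U^*$ is multiplication by $e(V)$, use that $e(V)$ is a monic polynomial (hence a non-zero-divisor) to get injectivity of $i_U^*$ and surjectivity of $\delta$, and conclude that $\delta$ induces the isomorphism with $H^*(\P(V))\cdot\delta\tau_U$. The additional care you take about the $\HS^*(B)$-module structure of $\delta$ and the alternative Gysin/Thom-space remark are consistent with, and slightly more explicit than, the paper's presentation.
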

\begin{proof}
    The long exact sequence of the triple $(S_{V,U}, S_U, B)$ is given by 
    \[
    \begin{tikzcd}
        ... \arrow{r} & \HS^*(S_{V,U}, B) \arrow{r}{i_U^*}\arrow[equal]{d} & \HS^*(S_U, B) \arrow{r}{\delta}\arrow[equal]{d} & \HS^{*+1}(S_{V,U}, S_U) \arrow{r} & ...\\
        & \HS^*(B)\cdot \tau_{V,U} & \HS^*(B) \cdot \tau_U
    \end{tikzcd}
    \]
    From Lemma \ref{L:ThomClassPullback}, $i_U^*(\tau_{V,U}) = e(V) \tau_U$. Since $e(V)$ is not a zero divisor in $\HS^*(B)$, the map $i_U^* : \HS^*(B)\cdot \tau_{V,U} \to \HS^*(B) \cdot \tau_U$ is injective with image $\HS^*(B) \cdot (e(V)\tau_U)$. Exactness implies that $\delta$ is surjective with kernel $\HS^*(B) \cdot (e(V)\tau_U)$. Thus $\delta$ induces an isomorphism 
    \begin{align*}
        \delta : \left(\HS^*(B)/\<e(V)\>\right) \cdot \tau_U &\to H^{*+1}(S_{V,U}, S_U)\\
        \alpha \tau_U &\mapsto \alpha \delta\tau_U.
    \end{align*}
    Recall that $\HS^*(B) = H^*(B)[x]$, hence $\HS^*(B)/\<e(V)\> = H^*(\P(V))$ as required.
\end{proof}

Now let $W \subset U$ be a proper subbundle of $U$ with dimension $b'$. A crucial ingredient in the above calculation of $\HS^*(S_{V,U}, S_U)$ is the fact that $e(V)$ is not a zero divisor, hence $i_U^*$ is injective. The group $\HS^*(S_{V, U}, S_W)$ is less straightforward to calculate since $e(V \oplus U/W)$ might be a zero divisor.

Suppose there exists a section $\phi : B \to U$ which is disjoint from $W$. Then one can define a refined Thom class $\ttau^\phi_{V,U}$ in the following manner. Let $N$ be an open tubular neighbourhood of the image $\phi(B) \subset U$, which is diffeomorphic to the total space of $U$. Since $\phi(B)$ and $W$ are disjoint closed spaces, it can be assumed that $N$ is disjoint from $W$. Define a homeomorphism $p^\phi_U : N \to U$ and extend it to a continuous map $p^\phi_U : S_U \to S_U$ by collapsing $S_U - N$ to infinity fiberwise. Since $S_W \subset (S_U - N)$, $p^\phi_U$ defines a map of pairs
\begin{align*}
    p^\phi_U : (S_U, S_W) \to (S_U, B).
\end{align*}
\begin{definition}\label{D:RefinedThomClass}
    Given a chamber $\phi : B \to (U - W)$, the refined Thom class $\ttau_{V,U}^\phi \in \HS^*(S_{V,U}, S_W)$ is defined by the formula
    \begin{align*}
        \ttau_{V,U}^\phi = (p^\phi_U)^*\tau_{V,U}.
    \end{align*} 
\end{definition}
This definition is of course independent of the homotopy class of $[\phi]$. Further, under the restriction map $i_W^* : \HS^{2a + b}(S_{V,U}, S_W) \to \HS^{2a + b}(S_{V,U}, B)$ the refined Thom class $\ttau_{V,U}^\phi$ is sent to the ordinary Thom class $\tau_{V,U}$. This is due to the fact that $(p^\phi_U)^*$ is a degree one map, hence $i_W^*(\ttau_{V,U}^\phi)$ restricts to the positive generator of $\HS^*((S_{V,U})_b, \infty_b)$ for each $b \in B$.
\begin{lemma}[\cite{BaragliaKonnoBFandSW} Proposition 3.2]\label{L:SUV-WCohom}
    Suppose that $W \subset U$ is a proper subbundle and $\phi : B \to U$ is a section which is disjoint from $W$. Let $\delta : \HS^*(S_W, B) \to \HS^*(S_{V,U}, S_W)$ be the connecting morphism in the long exact sequence of the triple $(S_{V,U}, S_W, B)$. Then $\HS^*(S_{V,U}, S_W)$ is a free rank two $\HS^*(B)$-module generated by $\ttau_{V,U}^\phi$ and $\delta \tau_W$.
\end{lemma}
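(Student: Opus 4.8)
The plan is to extract the $\HS^*(B)$-module structure of $\HS^*(S_{V,U}, S_W)$ from the long exact sequence of the triple $(S_{V,U}, S_W, B)$, in the same spirit as the proof of Lemma \ref{L:HSuvCohom}. The new feature compared with that lemma is that the section $\phi$ forces the restriction map $\HS^*(S_{V,U}, B) \to \HS^*(S_W, B)$ induced by $S_W \hookrightarrow S_{V,U}$ to vanish identically, rather than being an injection with image $e(V)\HS^*(B)\cdot\tau_W$; this makes the long exact sequence split into short exact sequences.

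First I would compute that restriction map on the free generator $\tau_{V,U}$ of $\HS^*(S_{V,U}, B)$. Arguing as in Lemma \ref{L:ThomClassPullback}, the inclusion $S_W \hookrightarrow S_{V,U}$ is homotopic to the zero section of its normal bundle, which is the pullback to $S_W$ of $V \oplus (U/W)$; hence $\tau_{V,U}$ restricts to $e(V\oplus U/W)\cdot\tau_W = e(V)\,e(U/W)\cdot\tau_W$ in $\HS^*(S_W, B)$. Since $\phi : B \to U$ is disjoint from $W$, its image in the quotient bundle $U/W$ is a nowhere-vanishing section, and because $W$ is a proper subbundle this quotient has positive rank, so $e(U/W) = 0$. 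Thus $\tau_{V,U}$ restricts to $0$, and as $\tau_{V,U}$ generates $\HS^*(S_{V,U}, B)$ freely over $\HS^*(B)$ and the restriction is $\HS^*(B)$-linear, the restriction map is identically zero.

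With that vanishing in hand, the long exact sequence of the triple breaks into short exact sequences of graded $\HS^*(B)$-modules
\[
0 \longrightarrow \HS^{*-1}(S_W, B) \stackrel{\delta}{\longrightarrow} \HS^{*}(S_{V,U}, S_W) \stackrel{i_W^*}{\longrightarrow} \HS^{*}(S_{V,U}, B) \longrightarrow 0,
\]
where $i_W^*$ is the map appearing just before the statement. Here $\delta$ is injective and $\HS^*(B)$-linear and $\HS^*(S_W, B)$ is free of rank one on $\tau_W$, so the image of $\delta$ is free of rank one on $\delta\tau_W$; and $\HS^*(S_{V,U}, B)$ is free of rank one on $\tau_{V,U}$. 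Since it was noted before the statement that $i_W^*(\ttau_{V,U}^\phi) = \tau_{V,U}$, the refined Thom class $\ttau_{V,U}^\phi$ lifts a free generator of the quotient; a free quotient makes the sequence split, and the usual diagram chase shows that $\{\delta\tau_W,\ \ttau_{V,U}^\phi\}$ is an $\HS^*(B)$-basis: a relation $\alpha\,\delta\tau_W + \beta\,\ttau_{V,U}^\phi = 0$ maps under $i_W^*$ to $\beta\,\tau_{V,U} = 0$, forcing $\beta = 0$ and then $\alpha = 0$, while any class $\xi$ satisfies $i_W^*(\xi) = \gamma\,\tau_{V,U}$ for some $\gamma \in \HS^*(B)$, so $\xi - \gamma\,\ttau_{V,U}^\phi$ lies in $\ker i_W^* = \HS^*(B)\cdot\delta\tau_W$.

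The only substantive step is the vanishing of the restriction map, i.e.\ the normal-bundle computation together with $e(U/W) = 0$; everything downstream is formal. The one point to verify with care is that the connecting homomorphism $\delta$ of the triple is a morphism of $\HS^*(B)$-modules, so that its image really is the cyclic submodule generated by $\delta\tau_W$ — this holds because every map in the long exact sequence of the triple is $\HS^*(B)$-linear.
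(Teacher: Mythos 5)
Your proof is correct and arrives at the same short exact sequence and splitting as the paper, but takes a slightly different route to establishing that the long exact sequence of the triple breaks up. The paper observes that since $i_W^*(\ttau_{V,U}^\phi) = \tau_{V,U}$ and $\tau_{V,U}$ freely generates $\HS^*(S_{V,U}, B)$, the map $i_W^*$ is surjective in every degree, which immediately forces the splitting into short exact sequences. You instead show that the other map, the restriction $\HS^*(S_{V,U}, B) \to \HS^*(S_W, B)$, vanishes identically, by identifying the normal bundle of $S_W$ in $S_{V,U}$ as the pullback of $V \oplus (U/W)$ and noting $e(U/W) = 0$ because $\phi$ descends to a nowhere-vanishing section of the positive-rank quotient $U/W$. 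By exactness the two statements are equivalent, and once you invoke the lift $\ttau_{V,U}^\phi$ in the splitting step your Euler-class computation becomes strictly redundant; still, it is a nice observation that makes explicit why the chamber hypothesis is exactly what changes the picture from Lemma \ref{L:HSuvCohom}, where the corresponding restriction is injective (multiplication by the non-zero-divisor $e(V)$) rather than zero. The remainder of your argument — that $\delta$ is injective and $\HS^*(B)$-linear, that $\delta\tau_W$ generates its image freely, and the diagram chase showing $\{\delta\tau_W, \ttau_{V,U}^\phi\}$ is a basis — matches the paper's splitting argument.
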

\begin{proof}
    This follows immediately from the following long exact sequence.
    \[
    \begin{tikzcd}
        ... \arrow{r} & \HS^{*-1}(S_W, B) \arrow{r}{\delta} & \HS^*(S_{V,U}, S_W) \arrow{r}{i_W^*} & \HS^*(S_{V,U}, B) \arrow{r} & ...
    \end{tikzcd}
    \]
    Since $i^*_W \ttau_{V,U}^\phi = \tau_{V,U}$, the map $i^*_W$ is surjective in all degrees and therefore the above long exact sequence splits into short exact sequences.
    \[
    \begin{tikzcd}
        0 \arrow{r} & \HS^{*-1}(S_W, B) \arrow[hook]{r}{\delta} & \HS^*(S_{V,U}, S_W) \arrow[two heads]{r}{i_W^*} & \HS^*(S_{V,U}, B) \arrow{r} & 0
    \end{tikzcd}
    \]
    The choice of lift $\ttau_{V,U}^\phi$ of $\tau_{V,U}$ determines a splitting the result follows.
\end{proof}
Let $\tU$ denote the open unit disk bundle in $\pi_{U}^*(V)$, which is an equivariant tubular neighbourhood of $S_U$ in $S_{V,U}$. Note that $S^1$ acts freely on $\pi_U^*(V)$. Define $\tY = S_{V,U} - \tU$, which is a compact manifold with boundary $\p \tY = S(\pi_{U}^*(V))$, the unit sphere bundle of $\pi_{U}^*(V)$. The $S^1$-action on $\tY$ is free, hence the quotient $Y = \tY/S^1$ is a compact smooth manifold with boundary $\p Y = S(\pi_{U}^*(V))/S^1 = \P(\pi_{U}^*(V))$, the projective bundle of $\pi_{U}^*(V)$. The Chern class of the principle $S^1$-bundle $\tY \to Y$ is the image of $x \in H^*(B)[x]$ under the pullback $\HS^*(B) \to \HS^*(\tY) \cong H^*(Y)$. The restriction map $H^*(Y) \to H^*(\p Y)$ sends $x$ to the Chern class of $\Ocal_{\pi_{U}^*(V)}(1) \to \P(\pi_{U}^*(V))$.

It is clear by excision that, as $\HS^*(B)$-modules, 
\begin{align*}
    \HS^*(S_{V,U}, S_U) &\cong \HS^*(\tY, \p \tY)\\
    &= H^*(Y, \p Y).
\end{align*} 
The last equality follows since $S^1$ acts freely on $Y$. To make an explicit identification of $H^*(Y, \p Y)$ and $H^*(S_{V,U}, S_U)$, first notice that 
\begin{align*}
    \p \tY &= S(\pi_U^*(V))\\
    &= \pi_U^*(S(V))\\
    &= S_U \times_B S(V).
\end{align*}
Thus the below diagram is a pullback.
\[
\begin{tikzcd}
    \p \tY \arrow{r}{\tp_2} \arrow{d}{\tp_1} & S(V) \arrow{d}{\pi_V}\\
    S_U \arrow{r}{\pi_U} & B
\end{tikzcd}
\]
Let $\tdelta : \HS^{* - 1}(\p \tY) \to \HS^*(\tY, \p \tY)$ denote the connecting map in the long exact sequence of the pair $(\tY, \p \tY)$. The following result is adapted from \cite[Proposition 3.5]{BaragliaKonnoBFandSW}
\begin{lemma}\label{L:GammaDef}
    There is an isomorphism $\gamma : \HS^*(S_{V,U}, S_U) \to H^*(Y, \p Y)$ of $\HS^*(B)$-modules which makes the following diagram commute.
    \begin{equation}
        \begin{tikzcd}\label{D:Gamma-p1}
            \HS^{*-1}(S_U, B) \arrow{r}{\delta} \arrow{d}{\tp_1^*} & \HS^*(S_{V,U}, S_U) \arrow{d}{\gamma}\\
            H^{*-1}(\p Y) \arrow{r}{\tdelta} & H^*(Y, \p Y)
        \end{tikzcd}
    \end{equation}
\end{lemma}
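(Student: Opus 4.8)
\emph{Proof plan.} The plan is to realise $\gamma$ as a composite of three standard isomorphisms and then to verify $(\ref{D:Gamma-p1})$ by a naturality argument for the long exact sequences of triples; everything is carried out $S^1$-equivariantly in the Borel model. Let $\bar{\tU}$ be the closed unit disk bundle of $\pi_{U}^*(V)$, that is, the closed tubular neighbourhood of $S_U$ in $S_{V,U}$, so that $\p\bar{\tU} = \p\tY$, $S_{V,U} = \bar{\tU}\cup\tY$ and $\bar{\tU}\cap\tY = \p\tY$. The bundle projection $r : \bar{\tU}\to S_U$ is a deformation retraction onto the zero section $S_U$ fixing $S_U$ pointwise, hence fixing the section at infinity $B\subset S_U$, so the inclusion $(S_U,B)\hookrightarrow(\bar{\tU},B)$ is a homotopy equivalence of pairs and we obtain isomorphisms $\HS^*(S_{V,U},\bar{\tU})\cong\HS^*(S_{V,U},S_U)$ and $\HS^*(\bar{\tU},B)\cong\HS^*(S_U,B)$. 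Since $\tY$ is the compact manifold complementary to the tube, with boundary $\p\tY$, excision gives $\HS^*(S_{V,U},\bar{\tU})\cong\HS^*(\tY,\p\tY)$. Finally the $S^1$-action on $\tY$ is free --- the $S^1$-fixed locus in $S_{V,U}$ is precisely $S_U$, which has been removed --- so $\HS^*(\tY,\p\tY) = H^*(Y,\p Y)$. Define $\gamma$ to be the composite
\[
\HS^*(S_{V,U},S_U)\ \cong\ \HS^*(S_{V,U},\bar{\tU})\ \cong\ \HS^*(\tY,\p\tY)\ =\ H^*(Y,\p Y).
\]
Each map here covers $\mathrm{id}_B$ and is $S^1$-equivariant, so $\gamma$ is an isomorphism of $\HS^*(B)$-modules.

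For $(\ref{D:Gamma-p1})$, note first that $\p\tY$ is disjoint from both $S_U$ and $B$, since the sphere bundle of the normal tube meets neither the zero section nor a sub-section of it. Hence $(\tY,\p\tY,\emptyset)\hookrightarrow(S_{V,U},\bar{\tU},B)$ is a map of triples, and naturality of the long exact sequence of a triple produces a ladder whose connecting-map square is
\[
\begin{tikzcd}
\HS^{*-1}(\bar{\tU},B) \arrow{r}{\delta'} \arrow{d}{j^*} & \HS^*(S_{V,U},\bar{\tU}) \arrow{d}{\cong} \\
\HS^{*-1}(\p\tY) \arrow{r}{\tdelta} & \HS^*(\tY,\p\tY)
\end{tikzcd}
\]
where $j^*$ is restriction along $j : \p\tY\hookrightarrow\bar{\tU}$, the right vertical is the excision isomorphism, and we have used that the triple connecting map of $(\tY,\p\tY,\emptyset)$ is the connecting map $\tdelta$ of the pair $(\tY,\p\tY)$. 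Now transport the top edge through the retraction isomorphisms of the first paragraph: $\delta'$ becomes the triple connecting map $\delta$ of $(S_{V,U},S_U,B)$, the top-right corner becomes $\HS^*(S_{V,U},S_U)$, the right vertical becomes the defining composite of $\gamma$, and the left vertical becomes $\HS^{*-1}(S_U,B)\xrightarrow{r^*}\HS^{*-1}(\bar{\tU},B)\xrightarrow{j^*}\HS^{*-1}(\p\tY)$. Since $r\circ j$ is the bundle projection $\p\tY = S(\pi_U^*(V))\to S_U$, we have $r\circ j = \tp_1$, so this composite is exactly $\tp_1^*$. Identifying $\HS^{*-1}(\p\tY) = H^{*-1}(\p Y)$ and $\HS^*(\tY,\p\tY) = H^*(Y,\p Y)$ by freeness of the action then turns the square into $(\ref{D:Gamma-p1})$, which is what we want.

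The routine ingredients are the three isomorphisms comprising $\gamma$: the deformation retraction of a disk bundle onto its zero section, excision across the boundary of a tubular neighbourhood, and the Borel-versus-quotient identification for a free action. The step that needs genuine care --- and the one I expect to be the main obstacle --- is the connecting-homomorphism comparison: one must choose the map of triples so that its induced ladder carries the excision isomorphism on the relevant term, keep precise track of which relative groups occur at each stage (this is exactly where disjointness of $\p\tY$ from $S_U$ and $B$ is used, so that restriction to $\p\tY$ lands in absolute cohomology), and check that after the retraction identification the left edge of the ladder is literally $\tp_1^*$ rather than merely homotopic to it. Once the square commutes, nothing more is needed for $\HS^*(B)$-linearity, since that was built into $\gamma$ from the outset.
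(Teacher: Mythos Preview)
Your proposal is correct and follows essentially the same route as the paper: both define $\gamma$ as the composite of the excision isomorphism for the tubular neighbourhood with the retraction isomorphism $\HS^*(S_{V,U},S_U)\cong\HS^*(S_{V,U},\text{tube})$, and both verify the square by applying naturality of the long exact sequence of a triple to the inclusion $(\tY,\p\tY,\emptyset)\hookrightarrow(S_{V,U},\text{tube},B)$, identifying the left edge with $\tp_1^*$ via the factorisation through the bundle projection. The only cosmetic difference is that you work with the closed tube $\bar{\tU}$ rather than the open tube $\tU$, which makes the excision step and the containment $\p\tY\subset\bar{\tU}$ slightly cleaner.
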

\begin{proof}
    Let $i : (\tY, \p \tY, \emptyset) \to (S_{V,U}, \tU, B)$ be the inclusion of triples, which induces the following commutative diagram between long exact sequences 
    \[
    \begin{tikzcd}
        \dots \arrow{r} & \HS^{*-1}(\tU, B) \arrow{r}{\delta} \arrow{d}{i^*} & \HS^*(S_{V,U}, \tU) \arrow{d}{i^*}[swap]{\cong} \arrow{r} & \HS^*(S_{V,U}, B) \arrow{r}\arrow{d}{i^*} & \dots\\
        \dots \arrow{r} & \HS^{*-1}(\p \tY) \arrow{r}{\tdelta} & \HS^*(\tY, \p \tY) \arrow{r} & \HS^*(\tY) \arrow{r} & \dots
    \end{tikzcd}
    \]
    It follows from excision that $i^* : \HS(S_{V,U}, \tU) \to \HS(\tY, \p \tY)$ is an isomorphism. Similarly, let $j : (S_{V,U}, S_U, B) \to (S_{V,U}, \tU, B)$ be the inclusion of triples, which induces the following commutative diagram between long exact sequences 
    \[
    \begin{tikzcd}
        \dots \arrow{r} & \HS^{*-1}(S_U, B) \arrow{r}{\delta}  & \HS^*(S_{V,U}, S_U)  \arrow{r} & \HS^*(S_{V,U}, B) \arrow{r} & \dots\\
        \dots \arrow{r} & \HS^{*-1}(\tU, B) \arrow{r}{\delta} \arrow{u}{j^*}[swap]{\cong}  & \HS^*(S_{V,U}, \tU) \arrow{r} \arrow{u}{j^*}[swap]{\cong} & \HS^*(S_{V,U}, B) \arrow{u}{j^*}[swap]{\cong} \arrow{r} & \dots
    \end{tikzcd}
    \]
    Since $j$ is a homotopy equivalence, all the vertical maps are isomorphisms. Since $\tU = S_U \times_B D(V)$, the projection map $p : \tU \to S_U$ is a homotopy inverse for $j$. Combining the two diagram gives 
    \[
    \begin{tikzcd}
        \HS^{*-1}(S_U, B) \arrow{r}{\delta} \arrow[bend right=60]{d}[swap]{p^*}  & \HS^*(S_{V,U}, S_U) \arrow[bend left=60]{dd}{\gamma}\\
        \HS^{*-1}(\tU, B) \arrow{r}{\delta} \arrow{u}{j^*}[swap]{\cong} \arrow{d}{i^*} & \HS^*(S_{V,U}, \tU) \arrow{u}{j^*}[swap]{\cong} \arrow{d}{i^*}[swap]{\cong}\\
        \HS^{*-1}(\p \tY) \arrow{r}{\tdelta} & \HS^*(\tY, \p \tY)
    \end{tikzcd}
    \]
    Let $\gamma = i^* \circ (j^*)\inv$, which is an isomorphism. Further $i^* \circ p^* = (p \circ i)^* = \tp_1^*$, hence diagram (\ref{D:Gamma-p1}) commutes.
\end{proof}
The isomorphism $\gamma$ naturally identifies $H^*(Y, \p Y)$ with $\HS(S_{V,U}, S_U)$ and thus identifies $H^*(Y, \p Y)$ as a free rank one $H^*(\P(V))$-module generate by $\gamma(\delta \tau_U)$. Diagram (\ref{D:Gamma-p1}) shows that calculations on $H^*(Y, \p Y)$ simplify to calculations in $H^*(\P(V))$ in the sense of the following result.
\begin{proposition}\label{P:PiYReduction}
    Let $\alpha \in H^*(Y, \p Y)$ and write $\alpha = \eta \cdot \gamma(\delta \tau_U)$ for some $\eta \in H^*(\P(V))$. Then
    \begin{align*}
        (\pi_Y)_*(\alpha) &= (\pi_{\P(V)})_*(\eta).
    \end{align*}
\end{proposition}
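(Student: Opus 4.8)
The plan is to transport the identity to the boundary $\p Y$ by means of the commutative square $(\ref{D:Gamma-p1})$, and then to descend from $\p Y$ to $\P(V)$ by base change. Throughout I assume, as in Lemma \ref{L:HSuvCohom}, that $a = \rank V \geq 1$, so that $\gamma(\delta\tau_U)$ is genuinely a free generator of $H^*(Y,\p Y)$ over $H^*(\P(V)) = \HS^*(B)/\<e(V)\>$. First I would quotient the pullback square preceding Lemma \ref{L:GammaDef} by the free $S^1$-action on $\p\tY$: this produces maps $p_1 : \p Y \to S_U$ and $p_2 : \p Y \to \P(V)$, the latter being the descent of $\tp_2$, forming a pullback square over $B$, so that $\pi_{\p Y} = \pi_U \circ p_1 = \pi_{\P(V)} \circ p_2$, where $\pi_{\p Y} : \p Y \to B$ is the restriction of $\pi_Y$. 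Since the $S^1$-bundle $\p\tY \to \p Y$ is the $p_2$-pullback of $S(V) \to \P(V)$, the equivariant pullback $\tp_1^*$ carries $x$ to $p_2^*(\bar x)$, where $\bar x \in H^2(\P(V))$ is the image of $x$ under $\HS^*(B) \to H^*(\P(V))$; combined with $\pi_{\p Y} = \pi_{\P(V)} \circ p_2$ this yields $\tp_1^*\big(\pi_U^*\tilde\eta\big) = p_2^*(\eta)$ for any lift $\tilde\eta \in H^*(B)[x]$ of $\eta$.

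Next I would rewrite $\alpha$ as a class in the image of $\tdelta$. Fixing such a lift $\tilde\eta$ and using that $\gamma$, $\delta$ and $\tp_1^*$ are maps of $\HS^*(B)$-modules while the $H^*(\P(V))$-module structure on $H^*(Y,\p Y)$ is the induced one, the square $(\ref{D:Gamma-p1})$ gives
\[
\alpha = \tilde\eta \cdot \gamma(\delta\tau_U) = \gamma\big(\delta(\tilde\eta \cdot \tau_U)\big) = \tdelta\big(\tp_1^*(\tilde\eta \cdot \tau_U)\big).
\]
Writing $\tilde\eta \cdot \tau_U = \pi_U^*(\tilde\eta) \smile \tau_U$, applying $\tp_1^*(\pi_U^*\tilde\eta) = p_2^*(\eta)$ and naturality of the cup product, this becomes $\alpha = \tdelta\big(p_2^*(\eta) \smile p_1^*(\bar\tau_U)\big)$, where $\bar\tau_U \in H^*(S_U)$ denotes the image of the Thom class $\tau_U$.

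Then I would push forward. The structural input is that, for any fibre bundle $\pi_Y : Y \to B$ whose fibre is a compact oriented manifold with boundary, with boundary bundle $\pi_{\p Y}$, one has $(\pi_Y)_* \circ \tdelta = (\pi_{\p Y})_*$. This follows from Lefschetz duality, under which $\tdelta$ corresponds to the homology pushforward along the inclusion $\p Y \hookrightarrow Y$, so the claim reduces to functoriality of the pushforward along the composite $\p Y \hookrightarrow Y \to B$ --- equivalently it is fibrewise Stokes' theorem in the de Rham picture --- once orientations are fixed so that no sign appears. Granting this, and using $\pi_{\p Y} = \pi_{\P(V)} \circ p_2$ with the projection formula,
\[
(\pi_Y)_*(\alpha) = (\pi_{\p Y})_*\big(p_2^*(\eta) \smile p_1^*(\bar\tau_U)\big) = (\pi_{\P(V)})_*\big(\eta \smile (p_2)_* p_1^*(\bar\tau_U)\big).
\]
Base change along the pullback square gives $(p_2)_* \circ p_1^* = \pi_{\P(V)}^* \circ (\pi_U)_*$, and $(\pi_U)_*(\bar\tau_U) = 1 \in H^0(B)$ since $\tau_U$ restricts to the positive generator of $H^*$ of each fibre sphere of $S_U \to B$; hence $(\pi_Y)_*(\alpha) = (\pi_{\P(V)})_*(\eta)$.

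I expect the difficulty to be organisational rather than conceptual: making the identification $\p Y \cong S_U \times_B \P(V)$ and the relation $\tp_1^*(x) = p_2^*(\bar x)$ precise, establishing $(\pi_Y)_* \circ \tdelta = (\pi_{\p Y})_*$ cleanly with $\Z$-coefficients via Lefschetz duality, and --- most delicately --- fixing the orientations of $S_{V,U}$, $\tY$, $Y$, $\p Y$ and $\P(V)$ coherently so that the final identity holds on the nose rather than merely up to sign.
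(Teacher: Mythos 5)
Your proof is correct and follows essentially the same route as the paper: rewrite $\alpha$ as $\tdelta\big(\tp_1^*(\eta\cdot\tau_U)\big)$ via Lemma \ref{L:GammaDef}, convert $(\pi_Y)_*\circ\tdelta$ into $(\pi_{\p Y})_*$ using $\tdelta = \iota_*$, and then push down from $\p Y = S_U\times_B\P(V)$ to $\P(V)$. The only cosmetic difference is the final step, where you use cohomological base change together with $(\pi_U)_*\bar\tau_U = 1$, while the paper recognises $\tp_1^*\tau_U$ directly as the relative Thom class of $S_U\times_B\P(V)\to\P(V)$ and applies the projection formula over $\P(V)$.
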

\begin{proof}
    From Lemma \ref{L:GammaDef} we have 
    \begin{align*}
        (\pi_Y)_*(\alpha) &= (\pi_Y)_*(\gamma(\eta\cdot \delta \tau_U))\\
        &= (\pi_Y)_*(\gamma(\delta (\eta \cdot \tau_U)))\\
        &= (\pi_Y)_*(\tdelta(\tp_1^* (\eta \cdot \tau_U))).
    \end{align*}
    Let $\iota : \p Y \to Y$ be the inclusion map. It was shown in \cite[Lemma 3.4]{BaragliaKonnoBFandSW} by direct calculation that $\tdelta = \iota_*$, hence
    \begin{align*}
        (\pi_Y)_*(\alpha) &= (\pi_Y)_* \left(\iota_*(\tp_1^* (\eta \cdot \tau_U))\right)\\
        &= (\pi_{\p Y})_* (\eta \cdot \tp_1^*\tau_U).
    \end{align*}
    Note that $(\pi_{\p Y})_* = (\pi_Y)_* \circ \iota_*$. Recall that $\p Y = S_U \times_B \P(V)$ and consider the following diagram.
    \[
    \begin{tikzcd}
        \p Y \arrow{r}{p_1} \arrow{d}{p_2} & S_U \arrow{d}{\pi_U} \\
        \P(V) \arrow{r}{\pi_{\P(V)}} & B
    \end{tikzcd}
    \]
    The bundle $p_2 : S_U \times_B \P(V) \to \P(V)$ is a sphere bundle over $\P(V)$ which is the pullback of $S_U$ by $\pi_{\P(V)}$. We can identify $\P(V) \subset \p Y$ as the image of the section at infinity. Viewing $\tp_1^*(\tau_U)$ as in element of $H^*(\p Y, \P(V)) = \HS^*(\p \tY, S(V))$ we see that $\tp_1^*(\tau_U)$ is the Thom class of $S_U \times_B \P(V) \to \P(V)$. Hence $(p_2)_* (\tp_1^*(\tau_U)) = 1$. Writing $\pi_{\p Y} = \pi_{\P(V)} \circ p_2$ and applying the projection formula we have 
    \begin{align*}
        (\pi_Y)_*(\alpha) &= (\pi_{\P(V)})_* \left((p_2)_* (\eta \cdot \tp_1^*\tau_U)\right)\\
        &= (\pi_{\P(V)})_*(\eta).
    \end{align*}
\end{proof}
The Segre classes $s_j(V) \in H^{2j}(B)$ of $V \to B$ are defined by the relation $s(V)c(V) = 1$ where $s(V) = 1 + s_1(V) + ... + s_a(V)$ is the total Segre class of $V$ and $c(V) = 1 + c_1(V) + ... + c_a(V)$ is the total Chern class of $V$. It can be shown by direct calculation that
\begin{align*}
    (\pi_{\P(V)})_*(x^j) = 
    \begin{cases}
        0 & \text{if $j < a-1$}\\
        s_{j-(a-1)}(V) & \text{if $j \geq a - 1$}
    \end{cases}
\end{align*}
By Proposition \ref{P:PiYReduction}, this data can be used to calculate $(\pi_Y)_*(\alpha)$ for any $\alpha \in H^*(Y, \p Y)$.

\subsection{Cohomological formula for $\SWfphi_m$}

Let $f : S_{V', U'} \to S_{V, U}$ be a finite dimensional monopole map with $D = V' - V \in K^0(B)$ and $H^+ = U - U' \in KO^0(B)$. As in Definition \ref{D:FDMonopoleMap}, we can assume that $U = U' \oplus H^+$ with $f|_{U'}$ the inclusion and $H^+$ a genuine vector bundle. Let $\phi : B \to (U - U')$ be a representative of a chamber. Let $\ttau^\phi_{V,U} \in H^{2a+b}(S_{V,U}, S_{U'})$ be the refined Thom class described in Definition \ref{D:RefinedThomClass}. Since $f|_{U'}$ is an inclusion, we have $f^* : H^*(S_{V, U}, S_{U'}) \to H^*(S_{V', U'}, S_{U'})$. One of the primary results of \cite{BaragliaKonnoBFandSW} is the following theorem 
\begin{theorem}\cite[Theorem 3.6]{BaragliaKonnoBFandSW}
    For each $m \geq 0$, we have 
    \begin{align}
        \SWfphi_m = (\pi_Y)_*(x^m \gamma(f^*\phi_*(1))).
    \end{align}
\end{theorem}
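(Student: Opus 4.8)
The plan is to identify the right-hand side with an integral over the moduli space $\Mcal_\phi$, by reading $f^*\phi_*(1)$ as a Poincar\'e--Lefschetz dual. Throughout, $\tU$, $\tY$, $Y$, $\pi_Y$ and $\gamma$ denote the constructions preceding Lemma~\ref{L:GammaDef} applied to the sphere bundle $S_{V',U'}$ with $S_{U'}$ playing the role of $S_U$, so that $\gamma : \HS^*(S_{V',U'}, S_{U'}) \to H^*(Y,\partial Y)$ is an isomorphism of $\HS^*(B)$-modules.

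First I would unwind $\phi_*(1) \in \HS^{2a+b}(S_{V,U}, S_{U'})$. Viewing $\phi$ as a section $B \to S_{V,U}$ whose image is disjoint from $S_{U'}$, the Gysin image $\phi_*(1)$ is the equivariant Poincar\'e--Lefschetz dual of $\phi(B)$ relative to $S_{U'}$; it agrees with the refined Thom class $\ttau^\phi_{V,U}$ of Definition~\ref{D:RefinedThomClass}, since the collapse map $p^\phi_U$ is exactly the device that transports the Thom class of $V\oplus U$ from the zero section onto $\phi(B)$. Now $f$ is transverse to $\phi(B)$, and since $f$ maps $S_{U'}$ to $S_{U'}$ it stays away from $\phi(B)$ there; so naturality of Poincar\'e duality under transverse preimages identifies $f^*\phi_*(1) \in \HS^{2a+b}(S_{V',U'}, S_{U'})$ with the equivariant Poincar\'e--Lefschetz dual of $\tMcal_\phi = f\inv(\phi(B))$ relative to $S_{U'}$. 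A dimension count, using $d = a'-a$, $b^+ = b-b'$ and $\dim\tMcal_\phi = (2d-b^+)+\dim B$, confirms the degrees agree.

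Next I would transport this through $\gamma$. As $\tMcal_\phi$ is compact and disjoint from $S_{U'}$, after shrinking $\tU$ we may assume $\tMcal_\phi$ lies in the interior of $\tY = S_{V',U'} - \tU$; since $S^1$ acts freely there we have $\HS^*(\tY,\partial\tY) = H^*(Y,\partial Y)$ and $\Mcal_\phi = \tMcal_\phi/S^1$ lies in the interior of $Y$. Compatibility of Poincar\'e duality with excision and with passage to a free quotient then gives $\gamma(f^*\phi_*(1)) = \iota_*(1)$, where $\iota : \Mcal_\phi \hookrightarrow Y$ is the inclusion and $\iota_*$ the Gysin map; the orientation of $\Mcal_\phi$ induced by the chosen orientation of $\det(\ind l|_{U'})$ is precisely the one making this an equality of oriented fundamental classes, which is the point of the orientation convention in \cite[Lemma~2.6]{BaragliaKonnoBFandSW}.

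Finally I would compute. The image of $x \in \HS^2(B)$ under $\HS^*(B)\to\HS^*(\tY)=H^*(Y)$ is $c_1$ of the line bundle $\tY\times_{S^1}\C$ associated to $\tY\to Y$; restricting along $\iota$ identifies it with $c_1(\tMcal_\phi\times_{S^1}\C)=c_1(\Lcal)$, so $\iota^*x = c_1(\Lcal)$. Using the projection formula, functoriality of the wrong-way maps and $\pi_{\Mcal_\phi}=\pi_Y\circ\iota$,
\begin{align*}
    (\pi_Y)_*\bigl(x^m\,\gamma(f^*\phi_*(1))\bigr)
      &= (\pi_Y)_*\bigl(x^m\cup\iota_*(1)\bigr)\\
      &= (\pi_Y)_*\iota_*\bigl(\iota^*(x^m)\bigr)\\
      &= (\pi_{\Mcal_\phi})_*\bigl(c_1(\Lcal)^m\bigr) = \SWfphi_m .
\end{align*}
The step I expect to be the main obstacle is the identification $\gamma(f^*\phi_*(1))=\iota_*(1)$: this means threading the class through excision for the tubular neighbourhood of $S_{U'}$, the equivariant-to-quotient isomorphism $\HS^*(\tY,\partial\tY)=H^*(Y,\partial Y)$, and naturality of duality under $f$, all while checking that every orientation convention lines up so the sign is right. (Alternatively one could write $\gamma(f^*\phi_*(1)) = \eta\cdot\gamma(\delta\tau_{U'})$ via Lemma~\ref{L:HSuvCohom} and apply Proposition~\ref{P:PiYReduction}, at the cost of identifying $\eta\in H^*(\P(V'))$ with the Chern data of $f$.)
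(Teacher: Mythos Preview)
The paper does not give its own proof of this statement: it is quoted verbatim as \cite[Theorem 3.6]{BaragliaKonnoBFandSW} and used as a black box, so there is nothing in the present paper to compare your argument against. Your outline is nonetheless the expected one and is essentially how the result is established in the cited reference: identify $\phi_*(1)$ with the refined Thom class $\ttau^\phi_{V,U}$, pull back along $f$ to obtain the equivariant Poincar\'e--Lefschetz dual of $\tMcal_\phi$, pass through $\gamma$ to the dual $\iota_*(1)$ of $\Mcal_\phi$ in $(Y,\partial Y)$, and finish with the projection formula. You have also correctly flagged the genuine work, namely tracking orientations through the excision and free-quotient identifications so that $\gamma(f^*\phi_*(1)) = \iota_*(1)$ holds with the sign dictated by the orientation convention of \cite[Lemma~2.6]{BaragliaKonnoBFandSW}.
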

Notice the differences in this formula compared to Definition \ref{D:FDSWFormula}. The above formula makes no reference to the moduli space $\Mcal_\phi$ or the line bundle $\Lcal$. Furthermore, this definition of $\SWfphi_m$ clearly only depends on $f$ and $\phi$ up to homotopy. Moreover, since $f^*\phi_*(1) \in H^*(S_{V', U'}, S_{U'})$, we can write $f^*\phi_*(1) = \eta^\phi \delta\tau_U$ for some $\eta^\phi \in H^*(\P(V))$. Then by Proposition \ref{P:PiYReduction} we have, for any $\alpha \in H^*(\P(V))$,
\begin{align}\label{E:SWFormulaEta}
    \SWfphi_m(\alpha) = (\pi_{\P(V)})_*(x^m \eta^\phi).
\end{align}
We extend this notation to a map $\SWfphi : H^*(\P(V)) \to H^*(B)$ defined for any $\alpha \in H^*(\P(V))$ by
\begin{align}
    \SWfphi(\alpha) &= (\pi_{\P(V)})_*(\alpha \eta^\phi).
\end{align}
Of course, the $m$th Seiberg-Witten invariant is given by $\SWfphi_m = \SWfphi(x^m)$.

\section{Localised formula}\label{S:Localisation}

To derive a connected sum formula for $\SWfphi_m$, equation $(\ref{E:SWFormulaEta})$ illustrates that we can focus our attention on the map $(\pi_{\P(V)})_*$. This will be accomplished using the Atiyah-Bott localisation theorem \cite{AtiyahEqCohom} in equivariant cohomology.

Given a ring $R$ and a multiplicative subset $S \subset R$, the localisation of $R$ with respect to $S$ is 
\begin{align*}
    S\inv R = \left\lbrace\frac{r}{s} \mid r \in R, s \in S\right\rbrace/\sim
\end{align*}
where $\frac{r_1}{s_1} \sim \frac{r_2}{s_2}$ if and only if there exists some $t \in S$ such that 
\begin{align*}
    t(r_1s_2 - r_2s_1) = 0.
\end{align*}
The localisation $S\inv R$ is a ring by adding and multiplying fractions in the usual way. There is a natural map $l_S : R \to S\inv R$ defined by 
\begin{align}\label{E:LocInc}
    l_S(r) &= \frac{r}{1}.
\end{align}
This map has the property that $l_S(S)$ is contained in the group of units of $S\inv R$. If $S$ contains no zero divisors, then $l_S$ is injective and $R$ can be identified as a subring of $S\inv R$. Alternatively, $S\inv R$ can be defined by the universal property that for any ring map $f : R \to T$ with $f(S)$ contained in the group of units of $T$, there is a unique extension $\tilde{f} : S\inv R \to T$ such that the following diagram commutes.
\[
\begin{tikzcd}
    S\inv R \arrow[dashed]{rr}{\exists ! \tilde{f}} && T\\
    & R \arrow{ul}{l_S} \arrow[swap]{ur}{f}
\end{tikzcd}
\]
Let $f : R_1 \to R_2$ be a morphism of rings with multiplicative subsets $S_1 \subset R_1, S_2 \subset R_2$. The universal property of localisation implies that $l_{S_2} \circ f$ extends uniquely to a morphism $\tilde{f} : S_1\inv R_1 \to S_2\inv R_2$ so long as $f(S_1) \subset S_2$. The extension makes the following diagram commute.
\begin{equation}\label{E:LocExt}
    \begin{tikzcd}
        S_1\inv R_1 \arrow[dashed]{r}{\exists ! \tilde{f}} & S_2\inv R_2\\
        R_1 \arrow{r}{f}\arrow{u}{l_{S_1}} & R_2 \arrow[swap]{u}{l_{S_2}}
    \end{tikzcd}
\end{equation}
We will often abuse notation by just writing $f : S_1\inv R_1 \to S_2\inv R_2$ to denote the extension $\tilde{f}$. For more detail on localised rings, see \cite{Eisenbud1995}.

As usual let $V_1, V_2$ be complex vector bundles of complex rank $a_1$ and $a_2$ respectively. Set $V = V_1 \oplus V_2$ with $a = a_1 + a_2$. Define an abnormal action on $V$ by letting $S^1$ act by scalar multiplication on the $V_1$ component and trivially on the $V_2$ component. Notice that this is different to the usual action of scalar multiplication on both components of $V$. Let $\pi : \P(V) \to B$ be the complex projective bundle of $V$ and extend this $S^1$-action to $\P(V)$. Let $\iota_i : \P(V_i) \to \P(V)$ be inclusions for $i \in \{1,2\}$ and note that the subset of fixed points of $\P(V)$ is exactly $\P(V_1) \cup \P(V_2)$.

Recall that the $S^1$-equivariant cohomology of a point is $\HS(*) = H^*(\CP^\infty)$ and pullback by the terminal map $\P(V) \to \{*\}$ endows $\HS^{*+2a}(\P(V))$ with the structure of a $\Z[y]$-module. Here $y$ is the pullback of the generator of $H^*(\CP^\infty) = \Z[y]$, which has degree 2. Let $S \subset \HS^{*}(\P(V))$ be the subset $S = \{1, y, y^2, ...\}$ containing all non-negative powers of $y$. The map 
\begin{align*}
    \iota_1^* \oplus \iota_2^* : \HS^*(\P(V)) \to \HS^*(\P(V_1)) \oplus \HS^*(\P(V_2))
\end{align*}
extends to a map 
\begin{align*}
    \iota_1^* \oplus \iota_2^* : S\inv \HS^*(\P(V)) \to S\inv \HS^*(\P(V_1)) \oplus S\inv \HS^*(\P(V_2))
\end{align*}
which satisfies the commutativity relation outlined in Diagram (\ref{E:LocExt}). The localisation theorem as described in \cite[Proposition 4.3]{AtiyahEqCohom} states that $\iota_1^* \oplus \iota_2^*$ induces an isomorphism between the localised cohomology groups. We will derive an explicit formula for its inverse.

Let $N_i$ be the normal bundle of $\P(V_i)$ in $\P(V)$. Denote by $\Ocal(-1) \to \P(V)$ the tautological line bundle with fibre at a line $l_b \in \P(V)_b$ the one-dimensional subspace $l_b \subset \pi^*(V)_b$. We will write $\Ocal_i(-1) \to \P(V_i)$ to denote the tautological line over $\P(V_i)$, noting that $\Ocal_i(-1) = \iota_i^* \Ocal(-1)$. Let $\Ocal(1) = \Hom(\Ocal(-1), \C)$ denote the dual of $\Ocal(-1)$. We view $V_j$ as a vector bundle over $\P(V_i)$ through pull back by $\pi_i : \P(V_i) \to B$.
\begin{lemma}\label{L:NiCalc}
    There are equivariant isomorphisms $N_1 \cong (V_2 \otimes \Ocal_1(1))_{-1}$ and $N_2 \cong (V_1 \otimes \Ocal_2(1))_1$, where the subscripts denote the weight of the $S^1$-action.
\end{lemma}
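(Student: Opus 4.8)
The plan is to identify the normal bundle $N_i$ of $\P(V_i)$ inside $\P(V)$ fibrewise, since the computation is local over $B$ and the $S^1$-action is fibrewise. Fix a point $b \in B$ and consider the fibre $\P(V_b) = \P(V_{1,b} \oplus V_{2,b})$. The tangent space to $\P(V_b)$ at a line $\ell \subset V_b$ is canonically $\Hom(\ell, V_b/\ell)$, and the tangent space to the submanifold $\P(V_{1,b})$ at a line $\ell \subset V_{1,b}$ is $\Hom(\ell, V_{1,b}/\ell)$. Therefore the normal space is the quotient
\begin{align*}
    \Hom(\ell, V_b/\ell) \big/ \Hom(\ell, V_{1,b}/\ell) \cong \Hom(\ell, V_{2,b}),
\end{align*}
using that $V_b/V_{1,b} \cong V_{2,b}$ and that $\ell \subset V_{1,b}$. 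Assembling over all $\ell \in \P(V_1)$ and all $b \in B$, this says $N_1 \cong \Hom(\Ocal_1(-1), \pi_1^* V_2) = \Ocal_1(1) \otimes V_2$ as ordinary complex vector bundles; the analogous argument at a line $\ell \subset V_{2,b}$ gives $N_2 \cong \Ocal_2(1) \otimes V_1$.

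Next I would track the $S^1$-weights through this identification, which is the only place the abnormal action matters. Recall $S^1$ acts by scalar multiplication on $V_1$ and trivially on $V_2$. On $\P(V)$ the induced action fixes $\P(V_1)$ and $\P(V_2)$ pointwise, so the interesting data is the weight on each normal bundle. Over $\P(V_1)$: a line $\ell \subset V_{1,b}$ carries the standard weight-one action (so $\Ocal_1(-1)$ has weight $1$ and hence $\Ocal_1(1)$ has weight $-1$), while $V_2$ has weight $0$; thus $N_1 \cong \Ocal_1(1) \otimes V_2$ has weight $-1 + 0 = -1$. Over $\P(V_2)$: a line $\ell \subset V_{2,b}$ is fixed with weight $0$ (so $\Ocal_2(\pm 1)$ have weight $0$), while $V_1$ still carries weight $1$; thus $N_2 \cong \Ocal_2(1) \otimes V_1$ has weight $0 + 1 = 1$. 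This gives the claimed equivariant isomorphisms $N_1 \cong (V_2 \otimes \Ocal_1(1))_{-1}$ and $N_2 \cong (V_1 \otimes \Ocal_2(1))_{1}$.

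The main obstacle is not the algebra but making the weight bookkeeping rigorous: one must be careful that the abnormal $S^1$-action is genuinely inherited by $\Ocal_i(-1)$ and by the Euler sequence, and that the identification $N_i \cong \Hom(\Ocal_i(-1), V_j)$ is $S^1$-equivariant and not merely an isomorphism of underlying bundles. The cleanest way to handle this is to use the equivariant Euler (tautological) sequence $0 \to \Ocal(-1) \to \pi^* V \to Q \to 0$ on $\P(V)$, restrict it along $\iota_i$, and observe that over $\P(V_1)$ the quotient $Q$ splits $S^1$-equivariantly as $(\Ocal_1(-1))^{\perp}\text{-part} \oplus V_2$ with the stated weights, and dually over $\P(V_2)$; then $N_i = T\P(V)|_{\P(V_i)} / T\P(V_i) \cong \Ocal_i(1) \otimes (\text{the transverse summand of } Q)$ as equivariant bundles. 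Once the weights are pinned down in one fibre, equivariance and local triviality over $B$ propagate the identification globally.
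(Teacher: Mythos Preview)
Your argument is correct and closely parallels the paper's, but the execution differs in two places worth noting. For the underlying bundle isomorphism, the paper writes out the Euler sequence $0 \to \C \to V \otimes \Ocal(1) \to T(\P(V)/B) \to 0$, restricts along $\iota_i$, stacks it against the Euler sequence for $V_i$, and invokes the nine lemma to extract $N_i \cong (V/V_i) \otimes \Ocal_i(1)$; you instead quote the identification $T_\ell \P(V_b) \cong \Hom(\ell, V_b/\ell)$ and take the quotient directly. These are the same fact packaged differently: your version is quicker but assumes the reader knows the $\Hom$ description, while the paper's diagram chase is self-contained and makes the global (over $B$) nature of the isomorphism manifest without a separate ``assemble over $b$'' step. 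For the weights, the paper does an explicit local-coordinate computation, writing a transverse curve $\gamma_t = [1,0,\dots,0,tv_1,\dots,tv_n]$ and acting by $e^{i\theta}$ to see the factor $e^{-i\theta}$ appear on the $V_2$ coordinates; your abstract bookkeeping (weight of $\Ocal_i(-1)$ inherited from $V_i$, then add) is cleaner and generalises immediately, though it relies on the reader trusting that the $\Hom$ identification is genuinely $S^1$-equivariant---precisely the obstacle you flag in your last paragraph. The paper's coordinate check sidesteps that worry by brute force.
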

\begin{proof}
    Let $T(\P(V)/B)$ denote the vertical tangent bundle of $\pi : \P(V) \to B$. Consider the following short exact sequence of vector bundles over $\P(V)$, defined by inclusion and projection maps.
    \begin{align*}
        0 \to \Ocal(-1) \to V \to V/\Ocal(-1) \to 0
    \end{align*}
    Tensoring with $\Ocal(1)$ produces the following exact sequence of vector bundles over $\P(V)$, known as the Euler sequence.
    \begin{align*}
        0 \to \C \to V \otimes \Ocal(1) \to T(\P(V)/B) \to 0
    \end{align*}
    Pulling back by $\iota_i : \P(V_i) \to \P(V)$ gives an exact sequence of vector bundles over $\P(V_i)$. Combining this with the Euler sequence for $V_i$, we obtain the following commutative diagram of vector bundles over $\P(V_i)$.
    \[
    \begin{tikzcd}
                    & 0 \arrow{d}           & 0 \arrow{d}                               & 0 \arrow{d}\\
        0 \arrow{r} & \C \arrow{r}\arrow{d} & V_i \otimes \Ocal_i(1) \arrow{r}\arrow{d} & T(\P(V_i)/B) \arrow{r}\arrow{d} & 0\\
        0 \arrow{r} & \C \arrow{r}\arrow{d} & V \otimes \Ocal_i(1) \arrow{r}\arrow{d} & \iota_i^*T(\P(V)/B) \arrow{r}\arrow{d} & 0\\
                   & 0 & V/V_i \otimes \Ocal_i(1) \arrow[dashed]{r}{f}\arrow{d} & N_i \arrow{r}\arrow{d} & 0\\
                    &                                                 & 0 & 0 &
    \end{tikzcd}
    \]
    The columns and first two rows of this diagram are exact. An elementary diagram chase shows that there is a well defined map $f : V/V_i \otimes \Ocal_i(1) \to N_i$ represented by the dashed arrow. The nine lemma proves that $f$ is an isomorphism. Identifying $V/V_1 \cong V_2$ and $V/V_2 \cong V_1$ shows that $N_1 \cong (V_2 \otimes \Ocal_1(1))$ and $N_2 \cong (V_1 \otimes \Ocal_2(1))$ on the level of ordinary vector bundles.

    To show that the weight of the $S^1$-action on $N_1$ is $-1$, fix $l_b \in \P(V_1)$ for some $b \in B$ and local trivialisations $V_1|_U \cong \C^{a_1}$, $V_2|_U \cong \C^{a_2}$ for some open $U \subset \P(V_1)$ containing $l_b$. We can assume that $l_b = [1,0,...,0]$ in the homogenous coordinates associated to this trivialisation. We can also assume that $V|_U \cong \C^{a_1} \oplus \C^{a_2}$ is a trivialisation with $V_1|_U = \C^{a_1} \oplus 0$ and $V_2|_U = 0 \oplus \C^{a_2}$. A vector $v \in (N_1)_{l_b}$ in a fibre of $N_1$ can be represented as the tangent vector to a curve $\gamma_t$ transverse to $\P(V_1)|_U \subset  \P(V)|_U$, given by.
    \begin{align*}
        \gamma_t &= [1,0,...,0] + t[0,...,0,v_1,...,v_n]\\
        &= [1,0,...,0,tv_1,...,tv_n].
    \end{align*}
    Recall that $e^{i\theta} \cdot [w,z] = [e^{i\theta}w, z]$ for $w \in V_1|_U$, $z \in V_2|_U$ with at least one of them non-zero. Acting on $\gamma_t$ gives 
    \begin{align*}
        e^{i\theta} \cdot \gamma_t &= e^{i\theta} \cdot [1,0,...,0,tv_1,...,tv_n]\\
        &= [e^{i\theta},0,...,0,tv_1,...,tv_n]\\
        &= [1,0,...,0,te^{-i\theta}v_1,...,te^{-i\theta}v_n]\\
        &= [1,0,...,0] + t[0,...,0,e^{-i\theta}v_1,...,e^{-i\theta}v_n].
    \end{align*}
    Hence $S^1$ acts on $N_1$ with weight $-1$. A similar argument shows that $S^1$ acts on $N_2$ with weight $+1$.
\end{proof}
Since $S^1$ acts trivially on $\P(V_i)$, we have that $\HS^*(\P(V_i)) = H^*(\P(V_i))[y]$ and $S\inv \HS^*(\P(V_i)) = H^*(\P(V_i))[y, y\inv]$. That is, we can identify $\HS^*(\P(V_i))$ as a subring of $S\inv \HS^*(\P(V_i))$ using the injective map defined in (\ref{E:LocInc}). Let $e(N_i) \in \HS^{a_i}(\P(V_i))$ denote the $S^1$-equivariant Euler class of $N_i$.
\begin{lemma}\label{L:e(N_i)calc}
    The equivariant Euler class $e(N_i)$ is invertible in the localised ring $S\inv\HS^*(\P(V_i))$.
\end{lemma}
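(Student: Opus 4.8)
The plan is to compute $e(N_i)$ explicitly from Lemma \ref{L:NiCalc} and show that, after inverting $y$, it becomes a power of $y$ times a unit.

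First I would record that, by Lemma \ref{L:NiCalc}, $N_1 = (V_2 \otimes \Ocal_1(1))_{-1}$ is an ordinary complex vector bundle of rank $a_2$ over $\P(V_1)$ carrying an $S^1$-action of weight $-1$, and similarly $N_2 = (V_1 \otimes \Ocal_2(1))_{1}$ has rank $a_1$ with a weight-$(+1)$ action. If $\xi_1, \dots, \xi_{a_2}$ denote the Chern roots of $N_1$, then --- exactly as in the derivation of $(\ref{E:EquivEulerClass})$, where a weight-$(+1)$ scalar action shifts the ordinary Chern roots by $+x$ --- a weight-$(-1)$ action shifts them by $-y$, so the equivariant Chern roots of $N_1$ are $\xi_j - y$. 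By the splitting principle,
\begin{align*}
    e(N_1) &= \prod_{j=1}^{a_2}(\xi_j - y) = (-y)^{a_2} + \sum_{k=1}^{a_2} (-y)^{a_2 - k}\, c_k(N_1)
\end{align*}
in $\HS^{*}(\P(V_1)) = H^*(\P(V_1))[y]$, where $c_k(N_1) \in H^{2k}(\P(V_1))$ is the $k$th ordinary Chern class of $N_1$. The same computation gives $e(N_2) = y^{a_1} + \sum_{k=1}^{a_1} y^{a_1-k}\, c_k(N_2)$ in $H^*(\P(V_2))[y]$.

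Next I would pass to the localised ring $S\inv\HS^*(\P(V_i)) = H^*(\P(V_i))[y, y\inv]$ and factor out the leading power of $y$; for $i = 1$ this reads
\begin{align*}
    e(N_1) &= (-y)^{a_2}\bigl(1 + \nu_1\bigr), \qquad \nu_1 = \sum_{k=1}^{a_2} (-1)^{k}\, y^{-k}\, c_k(N_1).
\end{align*}
Since $\P(V_1)$ is a compact manifold, $H^*(\P(V_1))$ is a finite-dimensional graded ring, vanishing in degrees greater than some $D$. Every term of $\nu_1$ is $y^{-k}$ times a cohomology class of strictly positive degree, so any product of more than $D/2$ such terms necessarily vanishes; hence $\nu_1$ is nilpotent and $1 + \nu_1$ is a unit, with inverse the finite sum $\sum_{m \ge 0}(-\nu_1)^m$. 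As $(-y)^{a_2}$ is visibly a unit in $H^*(\P(V_1))[y, y\inv]$, it follows that $e(N_1)$ is invertible there, and the identical argument applies to $e(N_2)$. (If $a_i = 0$ then $N_i$ has rank zero and $e(N_i) = 1$, so there is nothing to prove.)

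There is no serious obstacle here: the content is just that $e(N_i)$ equals $(\pm y)^{\rank N_i}$ times $1 + (\text{nilpotent})$. The one step requiring a little care is the nilpotency of $\nu_i$, which uses both the finite-dimensionality of $H^*(\P(V_i))$ and the fact that the non-leading coefficients of $e(N_i)$, being Chern classes, have strictly positive cohomological degree.
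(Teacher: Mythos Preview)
Your proof is correct and takes a slightly different route from the paper's. You factor $e(N_i)$ as $(\pm y)^{\rank N_i}(1+\nu_i)$ and invoke nilpotency of $\nu_i$ to invert $1+\nu_i$ by a finite geometric series; the paper instead writes down an explicit candidate for $e(N_2)^{-1}$, namely $\sum_{k\ge 0} s_k(V_1)\,y^{-a_1-k}$, and checks directly that it multiplies back to $1$ using the defining relation $c(V_1)s(V_1)=1$ between Chern and Segre classes. Your argument is cleaner for the bare statement and avoids introducing Segre classes at this point. The payoff of the paper's approach is that the explicit inverse is exactly what is used a few pages later (the formula for $e(M_2)^{-1}$ just before Lemma~\ref{L:SegreCalcD1} and the proof of Theorem~\ref{T:FSWofWedge}), so with your route you would still need to derive that formula separately when the time comes. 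One small slip: in your final parenthetical, $N_i$ has rank $a_{3-i}$, not $a_i$, so the trivial case is $a_{3-i}=0$ (and indeed if $a_i=0$ then $\P(V_i)$ is empty and there is nothing to show).
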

\begin{proof}
    We will prove the result for $N_2$. The $N_1$ case follows the same argument with $y$ replaced with $-y$. Let $\alpha_i$ for $0 \leq i \leq a_1$ be the Chern roots of $V_1$. Then by Lemma \ref{L:NiCalc}, the Chern roots of $N_2 = (V_1 \otimes \Ocal_2(1))_1$ are $\alpha_i(V_1) + y$. Thus by the splitting principle, the total chern class $c(N_2)$ of $N_2$ is 
    \begin{align*}
        c(N_2) &= \sum_{i=0}^{a_1} \sum_{j=0}^{i} c_{j}(V_1) y^{i - j}.
    \end{align*}
    It follows that the equivariant Euler class $e(N_2)$ is given by
    \begin{align*}
        e(N_2) &= \sum_{j=0}^{a_1} c_{j}(V_1) y^{a_1 - j}.
    \end{align*}
    We claim that the inverse of $e(N_2)$ in $S\inv\HS^{*}(\P(V_2))$ is given by 
    \begin{align*}
        e(N_2)\inv &= \sum_{k = 0}^{a_1} s_k(V_1)y^{-a_1-k}
    \end{align*}
    Here $s_j(V_1) \in H^{2j}(B)$ denotes the $j$-th Segre class and is defined by the equation $c(V_1)s(V_1) = 1$ for $s(V_1) = 1 + s_1(V_1) + s_2(V_1) + ... $ the total Segre class. Note that this sum is finite since $B$ is a finite dimensional manifold. It follows that
    \begin{align*}
        e(N_2)e(N_2)\inv &= \sum_{j=0}^{a_1}\sum_{k = 0}^{a_1}  c_{j}(V_1)s_k(V_1)y^{-(j+k)}\\
        &= \sum_{m=0}^{2a_1}\left(\sum_{j=0}^{m}  c_{j}(V_1)s_{m-j}(V_1)\right)y^{-m}.
    \end{align*}
    Since $c(V_1)s(V_1) = 1$, we must have that $\sum_{j=0}^{m}  c_{j}(V_1)s_{m-j}(V_1)$ is zero for $m > 0$ by the grading of $H^*(B)$. Therefore $e(N_2)e(N_2)\inv = 1$.
\end{proof}

\begin{lemma}\label{L:iotaCalcs}
    For any $\alpha \in \HS^*(\P(V_j))$ with $j, j' \in \{1,2\}$ distinct, we have
    \begin{align*}
        \iota^*_j((\iota_j)_*\alpha) &= e(N_j)\alpha\\
        \iota^*_j((\iota_{j'})_*\alpha) &= 0.
    \end{align*}
\end{lemma}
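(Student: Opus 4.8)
The plan is to prove the two formulas $\iota_j^*((\iota_j)_*\alpha) = e(N_j)\alpha$ and $\iota_j^*((\iota_{j'})_*\alpha) = 0$ using standard properties of the equivariant pushforward (Gysin) map for the inclusions $\iota_1, \iota_2 : \P(V_i) \to \P(V)$. First I would recall that since the fixed-point set of the abnormal $S^1$-action on $\P(V)$ is exactly $\P(V_1) \sqcup \P(V_2)$, the submanifolds $\P(V_1)$ and $\P(V_2)$ are disjoint in $\P(V)$. This immediately handles the second formula: the support of the pushforward $(\iota_{j'})_*\alpha$ is contained in (a neighbourhood of) $\P(V_{j'})$, which is disjoint from $\P(V_j)$, so restricting to $\P(V_j)$ gives zero. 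More precisely, $(\iota_{j'})_*$ factors through $\HS^*(\P(V), \P(V) - \P(V_{j'}))$ via excision to a tubular-neighbourhood Thom isomorphism, and $\iota_j^*$ of any such class vanishes because $\P(V_j)$ lies in the complement $\P(V) - \P(V_{j'})$.

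For the first formula, the key input is the self-intersection identity for the Gysin map: for an embedding $\iota_j$ with normal bundle $N_j$, one has $\iota_j^* (\iota_j)_* \alpha = e(N_j) \cup \alpha$, where $e(N_j)$ is the (equivariant) Euler class of the normal bundle. I would justify this exactly as in the non-equivariant case, working in the Borel construction: $\iota_j^* (\iota_j)_*\alpha$ is computed by pulling back the Thom class of the tubular neighbourhood of $(\P(V_j))_{S^1}$ in $(\P(V))_{S^1}$ to the zero section, which by definition yields the Euler class of the (equivariantly twisted) normal bundle $N_j$, whence the cup product with $\alpha$ by the projection/module property of $(\iota_j)_*$. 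Combined with Lemma \ref{L:NiCalc}, which identifies $N_1 \cong (V_2 \otimes \Ocal_1(1))_{-1}$ and $N_2 \cong (V_1 \otimes \Ocal_2(1))_1$ equivariantly, this gives the stated formula with $e(N_j)$ the equivariant Euler class computed (in the $N_2$ case) in Lemma \ref{L:e(N_i)calc}.

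The main obstacle, such as it is, is purely bookkeeping: one must be careful that all the Thom-isomorphism and excision steps are carried out $S^1$-equivariantly, i.e. in the Borel model $EG \times_{S^1} (-)$, and that the equivariant structure on $N_j$ (the nontrivial $S^1$-weight $\mp 1$ recorded in Lemma \ref{L:NiCalc}) is respected throughout — this is what makes $e(N_j)$ a genuine polynomial in $y$ rather than just a class pulled back from $H^*(B)$, and it is precisely this feature that was used in Lemma \ref{L:e(N_i)calc} to invert $e(N_j)$ after localisation. There is no deep difficulty; the content is the combination of the disjointness of the fixed loci (for the second identity) with the standard excess-intersection formula (for the first).
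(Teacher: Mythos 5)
Your proof is correct and follows essentially the same route as the paper: for the first identity, both arguments identify $(\iota_j)_*\alpha$ via the Thom isomorphism of a tubular neighbourhood (so that $\iota_j^*(\iota_j)_*\alpha$ pulls the Thom class back to the zero section, giving $e(N_j)\alpha$), and for the second identity both invoke the disjointness of $\P(V_j)$ and $\P(V_{j'})$. If anything your support/excision justification of the vanishing is stated a bit more carefully than the paper's ``$\pi_{j'}\circ\iota_j$ is constant'' shorthand, but the underlying reasoning is the same.
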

\begin{proof}
    Let $\pi_j : N_j \to \P(V_j)$ denote the projection with Thom class $\tau_j$. First note that since $(\pi_j)_*$ is an isomorphism, $(\iota_j)_*\alpha = \pi_j^*\alpha \cdot \tau_j$. This follows because $(\pi_j)_*(\iota_j)_*$ is the identity and $(\pi_j)_*(\pi_j^*\alpha \cdot \tau_j) = \alpha$ by the projection formula. Applying this produces the following calculation.
    \begin{align*}
        \iota^*_j((\iota_j)_*\alpha) &= \iota^*_j(\pi_j^*\alpha \cdot \tau_i)\\
        &= \alpha \iota^*_j(\tau_j)\\
        &= \alpha e(N_j).
    \end{align*}
    Note that the inclusion map $\iota_j$ is just the zero section of the normal bundle $N_j$, hence pulling back $\tau_j$ by $\iota_j$ gives $e(N_j)$. For $j,j' \in \{1,2\}$ distinct, we have 
    \begin{align*}
        \iota_j^*(\iota_{j'})_* \alpha &= \iota_j^* (\pi_{j'}^*\alpha \cdot \tau_{j'})\\
        &= (\pi_{j'} \circ \iota_j)^* \alpha \cdot \iota_j^* \tau_j\\
        &= 0.
    \end{align*}
    The last equality follows since $\pi_{j'} \circ \iota_j$ is a constant map.
\end{proof}
The above relations also hold for the extended maps $\iota^*_j : S\inv\HS^*(\P(V)) \to S\inv\HS^*(\P(V_j))$ and $(\tiota_j)_* : S\inv\HS^*(\P(V_j)) \to S\inv\HS^*(\P(V))$ by the uniqueness outlined in diagram (\ref{E:LocExt}).
\begin{lemma}\label{L:GammaEquation}
    For any $\gamma \in S\inv\HS^*(\P(V))$, we have
    \begin{align*}
        \gamma &= (\iota_1)_*(e(N_1)\inv \iota^*_1\gamma) + (\iota_2)_*(e(N_2)\inv \iota^*_2\gamma)
    \end{align*}
\end{lemma}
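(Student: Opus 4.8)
The plan is to verify the identity by applying the localisation isomorphism $\iota_1^* \oplus \iota_2^*$ to both sides. Write $\Psi(\gamma)$ for the right-hand side $(\iota_1)_*(e(N_1)\inv \iota_1^*\gamma) + (\iota_2)_*(e(N_2)\inv \iota_2^*\gamma)$, where $e(N_j)\inv$ denotes the inverse of the equivariant Euler class $e(N_j)$ in $S\inv\HS^*(\P(V_j))$, which exists by Lemma \ref{L:e(N_i)calc}, and where $(\iota_j)_*$ is understood as the extension $(\tiota_j)_*$ of the Gysin map to the localised rings as in Diagram (\ref{E:LocExt}). By the Atiyah--Bott localisation theorem, $\iota_1^* \oplus \iota_2^*$ is an isomorphism between $S\inv\HS^*(\P(V))$ and $S\inv\HS^*(\P(V_1)) \oplus S\inv\HS^*(\P(V_2))$; in particular it is injective, so it suffices to show $\iota_j^*(\Psi(\gamma)) = \iota_j^*\gamma$ for each $j \in \{1,2\}$.

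First I would compute $\iota_1^*(\Psi(\gamma))$. Using the relations of Lemma \ref{L:iotaCalcs}, which continue to hold after localisation by the remark following it, one gets $\iota_1^*\big((\iota_1)_*(e(N_1)\inv \iota_1^*\gamma)\big) = e(N_1)\cdot e(N_1)\inv \iota_1^*\gamma = \iota_1^*\gamma$ and $\iota_1^*\big((\iota_2)_*(e(N_2)\inv \iota_2^*\gamma)\big) = 0$, so $\iota_1^*(\Psi(\gamma)) = \iota_1^*\gamma$. The same computation with the indices $1$ and $2$ interchanged gives $\iota_2^*(\Psi(\gamma)) = \iota_2^*\gamma$. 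Since $\iota_1^* \oplus \iota_2^*$ is injective on the localised rings, this forces $\gamma = \Psi(\gamma)$, which is the assertion.

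There is no real analytic or combinatorial obstacle here: the substance has been absorbed into Lemmas \ref{L:NiCalc}, \ref{L:e(N_i)calc} and \ref{L:iotaCalcs} together with the cited localisation theorem, and what remains is essentially a one-line manipulation. The only point that warrants care is bookkeeping: all of the terms $e(N_j)\inv\iota_j^*\gamma$, the Gysin maps, and the projection-formula identities live in the localised rings $S\inv\HS^*(\P(V))$ and $S\inv\HS^*(\P(V_j))$ rather than in $\HS^*$ itself, so one should make explicit that $(\iota_j)_*$ in the statement means the extension $(\tiota_j)_*$ and that the identities of Lemma \ref{L:iotaCalcs} do extend to the localisations --- which is precisely the content of the remark immediately after that lemma.
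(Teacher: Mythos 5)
Your proposal is correct and is essentially the same argument as the paper's: both verify the identity by applying $\iota_1^* \oplus \iota_2^*$ to the right-hand side, simplify via Lemma \ref{L:iotaCalcs} (extended to the localised rings), and conclude from the injectivity of the localisation isomorphism. You have merely spelled out the bookkeeping more explicitly.
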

\begin{proof}
    Let $\beta = (\iota_1)_*(e(N_1)\inv \iota^*_1\gamma) + (\iota_2)_*(e(N_2)\inv \iota^*_2\gamma)$. Calculating using Lemma \ref{L:iotaCalcs} shows that $(\iota^*_1 + \iota^*_2)\beta = (\iota^*_1 + \iota^*_2)\gamma$, hence $\gamma = \beta$ since $\iota^*_1 + \iota^*_2$ is an isomorphism between the localised cohomology rings.
\end{proof}
Let $l_{\P(V)} : \HS^*(\P(V)) \to S\inv\HS^*(\P(V))$ denote the map sending $\alpha \in \HS^*(\P(V))$ to $l_{\P(V)}(\alpha) = \frac{\alpha}{1} \in S\inv\HS^*(\P(V))$. Note that we cannot necessarily identify $\HS^*(\P(V))$ as a subring of $S\inv \HS(\P(V))$ since $y \in S$ may be a zero divisor of $\HS^*(\P(V))$, in which case the map $\alpha \mapsto \frac{\alpha}{1}$ is not injective. However $\HS^*(B) = H^*(B)[y]$, hence $y$ is not a zero-divisor in $\HS^*(B)$ and the associated map $l_B : \HS^*(B) \to S\inv \HS^*(B)$ is the inclusion $H^*(B)[y] \subset H^*(B)[y, y\inv]$. Let $\pi_1 = \pi \circ \iota_i$ and extend $(\pi_i)_* : S\inv\HS^*(\P(V_i)) \to S\inv\HS^*(B)$ to a map between the localised cohomology rings.
\begin{theorem}[Localised Formula]\label{T:LocFormula}
    For any $\alpha \in \HS^*(\P(V))$, we have 
    \begin{align}\label{E:LocFormula}
        \pi_*(\alpha) &= (\pi_1)_*(e(N_1)\inv \iota_1^* \gamma) + (\pi_2)_*(e(N_2)\inv \iota_2^* \gamma)
    \end{align}
    where $\gamma = l_{\P(V)} (\alpha) \in S\inv\HS^*(\P(V))$. 
\end{theorem}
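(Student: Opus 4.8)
The plan is to apply the localised Gysin map to the identity provided by Lemma~\ref{L:GammaEquation}. Write $\pi_* : S\inv\HS^*(\P(V)) \to S\inv\HS^*(B)$ for the extension of the pushforward $\pi_* : \HS^*(\P(V)) \to \HS^*(B)$; since $\pi_*$ is an $\HS^*(B)$-module homomorphism (projection formula) and $S\inv\HS^*(\P(V))$ is the localisation of $\HS^*(\P(V))$ viewed as an $\HS^*(B)$-module, this extension exists, is $S\inv\HS^*(B)$-linear, and satisfies $\pi_*(l_{\P(V)}(\alpha)) = l_B(\pi_*(\alpha))$ by construction. The inverses $e(N_1)\inv$ and $e(N_2)\inv$ are available by Lemma~\ref{L:e(N_i)calc}, so every term below is well defined. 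Set $\gamma = l_{\P(V)}(\alpha)$ as in the statement.

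Applying $\pi_*$ to the identity of Lemma~\ref{L:GammaEquation} and using linearity gives
\[
\pi_*(\gamma) = \pi_*\bigl((\iota_1)_*(e(N_1)\inv \iota_1^*\gamma)\bigr) + \pi_*\bigl((\iota_2)_*(e(N_2)\inv \iota_2^*\gamma)\bigr).
\]
On the right I would invoke functoriality of Gysin maps: since $\pi_i = \pi \circ \iota_i$, one has $(\pi_i)_* = \pi_* \circ (\iota_i)_*$ on ordinary equivariant cohomology, and applying the localisation functor to these $\HS^*(B)$-module maps (the module analogue of diagram~(\ref{E:LocExt})) gives the same identity for the extended maps $S\inv\HS^*(\P(V_i)) \to S\inv\HS^*(B)$. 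Hence $\pi_*\bigl((\iota_i)_*(e(N_i)\inv \iota_i^*\gamma)\bigr) = (\pi_i)_*\bigl(e(N_i)\inv \iota_i^*\gamma\bigr)$ for $i \in \{1,2\}$. On the left, $\pi_*(\gamma) = l_B(\pi_*(\alpha))$ by the previous paragraph. Combining these, and then using that $l_B : \HS^*(B) = H^*(B)[y] \to S\inv\HS^*(B)$ is injective (as $y$ is not a zero divisor) to identify $\HS^*(B)$ with its image, yields precisely~(\ref{E:LocFormula}).

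The real content is carried by Lemma~\ref{L:GammaEquation}, which in turn uses the Atiyah--Bott localisation theorem together with the intersection-theoretic relations of Lemma~\ref{L:iotaCalcs}; once that identity is in hand the theorem is a short bookkeeping computation. The one place to be careful is the behaviour of the \emph{pushforward} maps $\pi_*$ and $(\iota_i)_*$ under localisation: unlike the pullbacks, these are module maps rather than ring maps, so the ring universal property stated in diagram~(\ref{E:LocExt}) does not apply directly; instead one should localise them as $\HS^*(B)$-module homomorphisms, after which functoriality gives the compatibility used above. I expect this to be the only subtle point — there is no genuine obstacle, just the need to be precise about which universal property is being invoked.
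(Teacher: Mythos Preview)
Your proposal is correct and follows essentially the same route as the paper: apply $\pi_*$ to the identity of Lemma~\ref{L:GammaEquation}, use $(\pi_i)_* = \pi_* \circ (\iota_i)_*$, and then invoke the compatibility $\pi_* \circ l_{\P(V)} = l_B \circ \pi_*$ together with the injectivity of $l_B$. Your remark that the pushforwards must be localised as $\HS^*(B)$-module maps rather than via the ring universal property of diagram~(\ref{E:LocExt}) is a valid point of care that the paper's proof glosses over.
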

\begin{proof}
    From Lemma \ref{L:GammaEquation}, we have 
    \begin{align*}
        &(\pi_1)_*(e(N_1)\inv \iota_1^* \gamma) + (\pi_2)_*(e(N_2)\inv \iota_2^* \gamma)\\ 
        &= \pi_*\left((\iota_1)_*(e(N_1)\inv \iota_1^* \gamma) + (\iota_2)_*(e(N_2)\inv \iota_2^* \gamma)\right) \\
        &= \pi_* \gamma\\
        &= \pi_* l_{\P(V)} (\alpha).
    \end{align*}
    From (\ref{E:LocExt}), the following diagram is commutative.
    \begin{equation}
        \begin{tikzcd}
            S\inv \HS^*(\P(V)) \arrow{r}{\pi_*} & S\inv\HS^*(B)\\
            \HS^*(\P(V)) \arrow{r}{\pi_*}\arrow{u}{l_{\P(V)}} & \HS^*(B) \arrow[swap, hook]{u}{l_{B}}
        \end{tikzcd}
    \end{equation}
    Therefore $\pi_* l_{\P(V)} (\alpha) = l_B(\pi_*(\alpha))$. In particular, the right-hand side of (\ref{E:LocFormula}) is valued in $\HS^*(B)$ and the formula follows.
\end{proof}

\section{Seiberg-Witten connected sum formula}\label{Ch:FSW-CSF}

For $i \in \{1,2\}$, let $f_i : S_{V'_i, U'_i} \to S_{V_i, U_i}$ be two finite dimensional monopole maps where $U_i \to B$ is a real vector bundle of rank $b_i$ and $V_i \to B$ is a complex vector bundle of rank $a_i$. We assume that $U_i' \subset U_i$ with $f_i|_{U_i'}$ the inclusion. Set $U = U_1 \oplus U_2$, $V = V_1 \oplus V_2$ and define $U'$ and $V'$ similarly. Let $f = f_1 \wedge_B f_2 : S_{V',U'} \to S_{V,U}$ denote the fibrewise smash product of $f_1$ and $f_2$. Then $f|_{U'}$ is the inclusion and $f$ is a finite dimensional monopole map.

For the purpose of applying the localised formula \ref{T:LocFormula}, define a $\T = S^1 \times S^1$ action on $V = V_1 \oplus V_2$ in the following manner. The first factor $\T_1 = S^1$ acts in the usual fashion, scalar multiplication on both $V_1$ and $V_2$. The second factor $\T_2 = S^1$ acts on $V_1$ by scalar multiplication but on $V_2$ trivially. The action of $\T_2$ on $V$ is the same as the $S^1$-action defined in \S\ref{S:Localisation}. Let $\T$ act on $V'$ in the same way and extend this action to $S_{V,U}$ and $S_{V', U'}$. Once again let $\T$ act trivially on $B$ so that $\HT^*(B) = H^*(B)[x,y]$, where $x$ generates the action of the first factor $\T_1$ and $y$ generates the action of the second factor $\T_2$. By pullback, $\HT(S_{V,U})$ and $\HT(S_{V',U'})$ are modules over $H^*(B)[x,y]$.

Let $S(V) \to B$ be the unit sphere bundle in $V$. Then $\T_1$ acts freely on $S(V)$ and $\P(V) = S(V)/\T_1$. As shown in the previous section, there is an induced action of $\T_2$ on $\P(V)$ with fixed point set $\P(V_1) \cup \P(V_2)$. Moreover $\HT(S(V)) = \HTtwo(\P(V))$, which is a module over $H^*(B)[y]$.

Let $\phi_2 : B \to (U_2 - U_2')$ represent a chamber for $f_2$ with $\phi = (0,\phi_2) : B \to (U - U')$ representing a chamber for $f$. Note that not every chamber of $f$ can be represented by a map of this form, however we make the simplifying assumption that such a chamber exists. Let $\ttau^\phi_{V,U} \in \HT^*(S_{V, U}, S_U)$ be the refined Thom class induced by $\phi$. The long exact sequence of the triple $(S_{V,U}, S_U, B)$ in $\T$-equivariant cohomology is given by 
\[
\begin{tikzcd}
    ... \arrow{r} & \HT^*(S_{V,U}, B) \arrow{r}{i^*}\arrow[equal]{d} & \HT^*(S_U, B) \arrow{r}{\delta}\arrow[equal]{d} & \HT^{*+1}(S_{V,U}, S_U) \arrow{r} & ...\\
    & \HT^*(B)\cdot \tau_{V,U} & \HT^*(B) \cdot \tau_U
\end{tikzcd}
\]
Thus the proof of Lemma \ref{L:HSuvCohom} shows that $\HT^*(S_{V,U}, S_U)$ is a free rank one $\HTtwo^*(\P(V))$-module generated by $\delta \tau_U$. Recall that $\tY$ is the complement of a $\T$-invariant tubular neighbourhood of $S_U \subset S_{V,U}$ with $Y = \tY / \T_1$. As before, $\pi_Y : Y \to B$ is an oriented fibre bundle with each fibre a smooth manifold with boundary. The push forward map $(\pi_Y)_* : \HTtwo^*(Y, \p Y) \to \HTtwo^*(B)$ defines a map in $\T_2$-equivariant cohomology. Lemma \ref{L:GammaDef} and Proposition \ref{P:PiYReduction} extend to this setting and show that there is an isomorphism $\gamma : \HT^*(S_{V,U}, S_U) \to \HTtwo^*(Y, \p Y)$ which gives $\HTtwo^*(Y, \p Y)$ the structure of a free $\HTtwo^*(\P(V))$-module generated by $\gamma(\delta \tau_U)$. Further, for $\alpha \in \HTtwo^*(Y, \p Y)$ written as $\alpha = \eta \cdot \gamma(\delta \tau_U)$ with $\eta \in \HTtwo^*(\P(V))$, we have
\begin{align*}
    (\pi_Y)_*(\alpha) &= (\pi_{\P(V)})_*(\eta).
\end{align*}
Note that $(\pi_{\P(V)})_* : \HTtwo^*(\P(V)) \to \HTtwo^*(B)$ is a map in $\T_2$-equivariant cohomology and so $(\pi_{\P(V)})_*(\eta)$ is valued in $H^*(B)[y]$. Moreover, $(\pi_{\P(V)})_*$ is $H^*(B)[y]$-linear.
\begin{definition}
    The $m$th $\T_2$-generalised Seiberg-Witten invariant $\SWhatfphi_m$ of $f$ with respect to the chamber $\phi$ is given by 
    \begin{align}
        \SWhatfphi_m &= (\pi_Y)_*(x^m\gamma(f^*\ttau^\phi_{V,U}))
    \end{align}
    which is valued in $\HTtwo^*(B) = H^*(B)[y]$.
\end{definition}
We call $\SWhatfphi_m$ the $\T_2$-generalised Seiberg-Witten since it is a polynomial in $H^*(B)[y]$ and evaluating at $y = 0$ gives the ordinary Seiberg-Witten invariant. That is, 
\begin{align*}
    \SWhatfphi_m|_{y=0} &= \SWfphi_m.
\end{align*}
Also note by the discussion above that we can write $\gamma(f^*\ttau^\phi_{V,U}) = \eta^\phi \gamma(\delta \tau_U)$ for some $\eta^\phi \in \HTtwo^*(\P(V))$ such that 
\begin{align*}
    \SWhatfphi_m &= (\pi_{\P(V)})_*(x^m \eta^\phi ).
\end{align*}
In this fashion we have a map $\SWhatfphi : \HTtwo^*(\P(V)) \to \HTtwo^*(B)$ defined by 
\begin{align*}
    \SWhatfphi(\alpha) = (\pi_{\P(V)})_*(\alpha \eta^\phi).
\end{align*} 
We will use this description and the localisation formula applied to $(\pi_{\P(V)})_*$ to derive a connected sum formula for $\SWhatfphi$.
Let $\iota_i' : \P(V_i') \to \P(V')$ denote the inclusion for $i \in \{1,2\}$ with projection $\pi_i' : \P(V_i') \to B$. Also let $N_i'$ denote the normal bundle of $\P(V_i')$ in $\P(V')$, which is a $\T_2$-equivariant vector bundle. As in Theorem \ref{T:LocFormula}, let $l_{\P(V')} : \HTtwo(\P(V')) \to S\inv \HTtwo(\P(V'))$ be the canonical map sending $\alpha \mapsto \frac{\alpha}{1}$ and note that $l_{\P(V')}$ might not be injective. Denoting $l_{\P(V')}(\eta^\phi)$ by $\teta^\phi$, we have 
\begin{align}\label{E:SWLocCalc}
    \SWhatfphi_m &= (\pi_1')_*(x^m e(N_1')\inv (\iota_1')^*\teta^\phi) + (\pi_2')_*(x^m e(N_2')\inv (\iota_2')^*\teta^\phi).
\end{align}

Before applying the above formula, we first perform some cohomology calculations. The first calculation invokes the external cup product, which is described in \cite[Theorem 3.18]{Hatcher}
\begin{lemma}\label{L:RefinedThomSplit}
    Let $\phi = (0, \phi_2) : B \to (U-U')$ be a chamber with refined Thom class $\ttau^\phi_{V,U} \in \HTtwo(S_{V,U}, S_{U'})$. Then
    \begin{align}\label{E:ThomSplit}
        \ttau^\phi_{V,U} &= \tau_{V_1, U_1} \smile \ttau_{V_2, U_2}^{\phi_2}
    \end{align}
    where $\tau_{V_1, U_1} \in \HTtwo^*(S_{V_1, U_1}, B)$ is the equivariant Thom class of $S_{V_1, U_1}$ and $\ttau_{V_2, U_2}^{\phi_2} \in \HTtwo^*(S_{V_2, U_2}, S_{U_2'})$ is the refined Thom class induced by $\phi_2$.
\end{lemma}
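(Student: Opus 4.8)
The plan is to exhibit both sides as the pullback of the external Thom class $\tau_{V_1,U_1}\smile\tau_{V_2,U_2}$ along one and the same collapse map, and then to invoke naturality of the external cup product. By multiplicativity of equivariant Thom classes (exactly as in the proof of Lemma~\ref{L:ThomClassPullback}), under the fibrewise smash decomposition $S_{V,U}=S_{V_1,U_1}\wedge_B S_{V_2,U_2}$ one has $\tau_{V,U}=\tau_{V_1,U_1}\smile\tau_{V_2,U_2}$ in $\HTtwo^*(S_{V,U},B)$. By Definition~\ref{D:RefinedThomClass} the left-hand side is $\ttau^\phi_{V,U}=(p^\phi_U)^*\tau_{V,U}$, where $p^\phi_U$ denotes the fibrewise collapse onto a tubular neighbourhood of $\phi(B)$, acting on $S_{V,U}$ via the identity on the $S_V$-factor.

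The key step is a geometric identification of this collapse map. Since $\phi=(0,\phi_2)$, its image is $\{0\}\times_B\phi_2(B)\subset U_1\oplus U_2$. I would pick a tubular neighbourhood $N_2$ of $\phi_2(B)$ in $U_2$ disjoint from $U_2'$, together with a homeomorphism $p^{\phi_2}_{U_2}\colon N_2\to U_2$, as in Definition~\ref{D:RefinedThomClass} applied to $f_2$, and then check that $N=U_1\times_B N_2$ is a tubular neighbourhood of $\phi(B)$ in $U$ (its normal bundle being $U_1\oplus\nu_2\cong U$), that $N$ is disjoint from $U'=U_1'\oplus U_2'$, and that $\id_{U_1}\times_B p^{\phi_2}_{U_2}\colon N\to U$ is an admissible trivialisation. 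Collapsing fibrewise and tracking the sections at infinity, this yields $p^\phi_U=\id_{S_{U_1}}\wedge_B p^{\phi_2}_{U_2}$ as self-maps of $S_U=S_{U_1}\wedge_B S_{U_2}$. Rearranging the smash factors of $S_{V,U}=(S_{V_1}\wedge_B S_{U_1})\wedge_B(S_{V_2}\wedge_B S_{U_2})$, the self-map of $S_{V,U}$ defining $\ttau^\phi_{V,U}$ becomes $\id_{S_{V_1,U_1}}\wedge_B P_2$, where $P_2:=\id_{S_{V_2}}\wedge_B p^{\phi_2}_{U_2}$ is precisely the map for which $\ttau^{\phi_2}_{V_2,U_2}=P_2^*\tau_{V_2,U_2}$.

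Granting this, the lemma is immediate from naturality of the external cup product: applying $(\id_{S_{V_1,U_1}}\wedge_B P_2)^*$ to $\tau_{V_1,U_1}\smile\tau_{V_2,U_2}$ gives $\tau_{V_1,U_1}\smile P_2^*\tau_{V_2,U_2}=\tau_{V_1,U_1}\smile\ttau^{\phi_2}_{V_2,U_2}$, which by the preceding paragraph equals $(p^\phi_U)^*\tau_{V,U}=\ttau^\phi_{V,U}$. The one bookkeeping point is that the external product $\tau_{V_1,U_1}\smile\ttau^{\phi_2}_{V_2,U_2}$ naturally lives in $\HTtwo^*\bigl(S_{V,U},\,S_{V_1,U_1}\wedge_B S_{U_2'}\bigr)$; but $p^\phi_U$ in fact collapses the whole of $S_{V_1,U_1}\wedge_B S_{U_2'}$ onto the section at infinity, so $\ttau^\phi_{V,U}$ also lifts to this finer relative group, where the identity holds verbatim, and both classes restrict to the stated classes in $\HTtwo^*(S_{V,U},S_{U'})$.

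The main obstacle is the middle paragraph: verifying that $N=U_1\times_B N_2$ is a legitimate tubular neighbourhood of $\phi(B)$ disjoint from $U'$ — so that it is an admissible choice in Definition~\ref{D:RefinedThomClass} for $f$ — and carefully matching the fibrewise smash structures and basepoints on $S_U$ and on $S_{V,U}$ so as to read off $p^\phi_U=\id_{S_{U_1}}\wedge_B p^{\phi_2}_{U_2}$. Once the collapse maps are identified, naturality of the external product does the rest with no further computation, and the relative-cohomology bookkeeping is routine.
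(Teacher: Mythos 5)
Your proposal is correct and follows essentially the same route as the paper: both arguments rest on the identification $p^\phi_U \simeq \id_{S_{U_1}} \wedge_B p^{\phi_2}_{U_2}$ (which the paper simply states holds up to homotopy, while you elaborate the tubular neighbourhood argument) combined with naturality of the external cup product and multiplicativity of Thom classes. The paper spends more of its effort carefully constructing the relative external cup product $\HTtwo^*(S_{U_1},B)\otimes_{\HTtwo^*(B)}\HTtwo^*(S_{U_2},S_{U_2'})\to\HTtwo^*(S_U,S_{U'})$ and reduces to the case $V=0$ for notational ease, whereas you keep $V$ throughout and flag the same relative-pair bookkeeping at the end; these are stylistic rather than substantive differences.
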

\begin{proof}
    We define the external cup product $\tau_{V_1, U_1} \smile \ttau_{V_2, U_2}^{\phi_2} \in \HTtwo(S_{V,U}, S_{U'})$ for $\tau_{V_1, U_1} \in \HTtwo^*(S_{V_1, U_1}, B)$ and $\ttau_{V_2, U_2}^{\phi_2} \in \HTtwo^*(S_{V_2, U_2}, S_{U_2'})$, then prove that (\ref{E:ThomSplit}) holds. For ease of notation, we assume that  $V = 0$, but the general argument is identical.

    Define two projection maps 
    \begin{align*}
        p_1 &: (S_{U_1} \times_B S_{U_2}, B \times_B S_{U_2}) \to (S_{U_1}, B)\\
        p_2 &: (S_{U_1} \times_B S_{U_2}, S_{U_1} \times S_{U_2'}) \to (S_{U_2}, S_{U_2'})
    \end{align*}
    with corresponding pullbacks
    \begin{align*}
        p_1^* &: \HTtwo^*(S_{U_1}, B) \to \HTtwo^*(S_{U_1} \times_B S_{U_2}, B \times_B S_{U_2})\\
        p_2^* &: \HTtwo^*(S_{U_2}, S_{U_2'}) \to \HTtwo^*(S_{U_1} \times_B S_{U_2},  S_{U_1} \times_B S_{U_2'}).
    \end{align*}
    Note that $B \times_B S_{U_2} \cup S_{U_1} \times_B S_{U_2'} = S_{U_1} \vee_B S_{U_2} \cup S_{U_1} \times_B S_{U_2'}$ so we have 
    \begin{align*}
        p_1^* \smile p_2^* : \HTtwo^*(S_{U_1}, B) \otimes \HTtwo^*(S_{U_2}, S_{U_2'}) \to \HTtwo^*(S_{U_1} \times_B S_{U_2}, S_{U_1} \times_B S_{U_2'} \cup S_{U_1} \vee_B S_{U_2}).
    \end{align*}
    Observe that 
    \begin{align*}
        (S_{U_1} \times_B S_{U_2'} \cup S_{U_1} \vee_B S_{U_2})/(S_{U_1} \vee_B S_{U_2}) \cong (S_{U_1} \wedge_B S_{U_2'})/B.
    \end{align*}
    Applying \cite[Proposition 2.22]{Hatcher} it follows that 
    \begin{align*}
        \HTtwo^*(S_{U_1} \times_B S_{U_2}, S_{U_1} \times_B S_{U_2'} \cup S_{U_1} \vee_B S_{U_2}) \cong \HTtwo^*(S_{U_1} \wedge_B S_{U_2}, S_{U_1} \wedge_B S_{U_2'}).
    \end{align*}
    Compose with the restriction 
    \[\HTtwo^*(S_{U_1} \wedge_B S_{U_2}, S_{U_1} \wedge_B S_{U_2'}) \to \HTtwo^*(S_{U_1} \wedge_B S_{U_2}, S_{U'}),\]
    to obtain a map 
    \begin{align*}
        \smile \;: \HTtwo^*(S_{U_1}, B) &\otimes_{\HTtwo^*(B)} \HTtwo^*(S_{U_2}, S_{U_2'}) \to \HTtwo^*(S_{U}, S_{U'}) \\
        \alpha &\smile \beta := p_1^*(\alpha) \cdot p_2^*(\beta).
    \end{align*}
    Note the slight abuse of notation - the multiplication on the right is the ordinary cup product in $\HTtwo^*(S_{U}, S_{U_1} \wedge S_{U_2'})$, which is then viewed under restriction as an element of $\HTtwo^*(S_{U}, S_{U'})$.

    Let $f_1 : (S_{W_1}, S_{W_1'}) \to (S_{U_1}, S_{U_1'})$ and $f_2 : (S_{W_2}, S_{W_2'}) \to (S_{U_2}, S_{U_2'})$ be maps of pairs and write $f = f_1 \wedge f_2 : (S_W, S_{W'}) \to (S_U, S_{U'})$. Naturality of the ordinary cup product implies that the following diagram commutes.
    \begin{equation}\label{E:ExternalCupNaturality}
        \begin{tikzcd}
            \HTtwo^*(S_{W_1}, B) \otimes_{\HTtwo^*(B)} \HTtwo^*(S_{W_2}, S_{W_2'}) \arrow{r}{\smile} & \HTtwo^*(S_W, S_{W'})\\
            \HTtwo^*(S_{U_1}, B) \otimes_{\HTtwo^*(B)} \HTtwo^*(S_{U_2}, S_{U_2'}) \arrow{r}{\smile} \arrow{u}{f_1^* \otimes f_2^*} & \HTtwo^*(S_U, S_{U'}) \arrow{u}[swap]{f^*}
        \end{tikzcd}
    \end{equation}
    Hence we obtain the formula 
    \begin{align*}
        (f_1 \wedge f_2)^*(\alpha \smile \beta) &= f_1^*(\alpha) \smile f_2^*(\beta).
    \end{align*}
    Finally, let $p^{\phi}_U : (S_U, S_{U'})  \to (S_U, B)$ denote the degree one map which collapses the complement of a neighbourhood of $\phi(B)$ to infinity, as described in Definition \ref{D:RefinedThomClass}. Up to homotopy, $p^\phi_U = \id_{U_1} \wedge p^{\phi_2}_{U_2}$ hence 
    \begin{align*}
        \ttau_{U}^\phi &= (p^\phi_{U})^*\tau_U\\
        &= (\id_{U_1} \wedge p^{\phi_2}_{U_2})^*(\tau_{U_1} \smile \tau_{U_2})\\
        &= \tau_{U_1} \smile (p^{\phi_2}_{U_2})^*\tau_{U_2}\\
        &= \tau_{U_1} \smile \ttau_{U_2}^{\phi_2}.
    \end{align*}
    This proves (\ref{E:ThomSplit}).
\end{proof}
As in Lemma \ref{L:HSuvCohom}, let $\delta : \HT^*(S_{U'}, B) \to \HT^*(S_{V', U'}, S_{U'})$ be the connecting morphism in the long exact sequence of the triple $(S_{V',U'}, S_{U'}, B)$ and define $\delta_2$ similarly for the triple $(S_{V_2',U_2'}, S_{U_2'}, B)$. By the definition of the external cup product above, we have 
\begin{align*}
    \delta \tau_{U'} &= \delta(p_1^* \tau_{U_1'} \cdot p_2^* \tau_{U_2'})\\
    &= \delta p_1^* \tau_{U_1'} \cdot p_2^* \tau_{U_2'} + (-1)^{a_1'} p_1^* \tau_{U_1'} \cdot \delta p_2^* \tau_{U_2'}.
\end{align*}
Further, $\delta p_1^* \tau_{U_1'} = 0$. This can be seen from the following commutative diagram, where the bottom row is the long exact sequence of the triple $(S_{U_1'}, S_{U_1'}, B)$.
\[
\begin{tikzcd}
    ... \arrow{r} & \HT^{*-1}(S_{U'}, B) \arrow{r}{\delta} & \HT^*(S_{V', U'}, S_{U'}) \arrow{r} & \HT^*(S_{V', U'}, B) \arrow{r} & ... \\
    ... \arrow{r} & \HT^{*-1}(S_{U_1'}, B) \arrow{r}{\delta_1} \arrow{u}{p_1^*} & \HT^*(S_{U_1'}, S_{U_1'}) \arrow{r} \arrow{u}{p_1^*} & \HT^*(S_{U_1'}, B) \arrow{r} \arrow{u}{p_1^*} & ... \\
\end{tikzcd}
\]
It follows that 
\begin{align}\label{E:TauUsplitting}
    \delta \tau_{U'} &= (-1)^{a_1'}p_1^* \tau_{U_1'} \cdot  p_2^* \delta_2 \tau_{U_2'} \nonumber\\
    &= (-1)^{a_1'}\tau_{U_1'} \smile \delta_2 \tau_{U_2'}.
\end{align}

Recall that $\phi : B \to (U - U')$ is a chamber satisfying $\phi = (0, \phi_2)$ where $\phi_2 : B \to (U_2 - U_2')$ is a chamber for $f_2$. Let $f_2^*(\ttau^{\phi_2}_{V_2,U_2}) = \eta^{\phi_2}\gamma(\delta \tau_{U_2'})$ for some uniquely determined $\eta^{\phi_2} \in \HTtwo^*(\P(V_2'))$.
\begin{lemma}\label{L:i*etaCalc}
    For $\phi : B \to (U - U')$ a chamber satisfying $\phi = (0, \phi_2)$, we have 
    \begin{align*}
        (\iota_1')^*(\eta^\phi) &= 0 \\
        (\iota_2')^*(\eta^\phi) &= e(N_2)e(H_1^+)\eta_2^{\phi_2}
    \end{align*}
    where $N_2 \to \P(V_2')$ is the $\T_2$-equivariant vector bundle $N_2 = V_1 \otimes \Ocal_{\P(V_2')}(1)$.
\end{lemma}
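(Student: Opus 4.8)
The plan is to compute $f^*\ttau^\phi_{V,U}$ by first factoring the refined Thom class according to Lemma~\ref{L:RefinedThomSplit}, namely $\ttau^\phi_{V,U} = \tau_{V_1,U_1} \smile \ttau^{\phi_2}_{V_2,U_2}$, and then using the multiplicativity of the external cup product under smash products of maps, $f^*(\alpha \smile \beta) = f_1^*(\alpha) \smile f_2^*(\beta)$, which was established in diagram~(\ref{E:ExternalCupNaturality}). This gives $f^*\ttau^\phi_{V,U} = f_1^*(\tau_{V_1,U_1}) \smile f_2^*(\ttau^{\phi_2}_{V_2,U_2})$. The first factor is easy: since $f_1$ is a finite dimensional monopole map, $f_1^*(\tau_{V_1,U_1})$ is (a power of $x$ times) the Euler class data of $f_1$; more precisely, writing $\tau_{V_1,U_1}$ as the image of the Thom class under $S_{V_1,U_1}$ and pulling back, one gets $f_1^*(\tau_{V_1,U_1}) = \degSone(\mu_1)$-type class, but for the present lemma the key point is only that $f_1^*(\tau_{V_1,U_1})$ restricted appropriately carries the factor $e(H_1^+)$ (since $U_1 = U_1' \oplus H_1^+$ and $f_1|_{U_1'}$ is the inclusion).

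Next I would push this identity through the chain of isomorphisms that define $\eta^\phi$. Recall $f^*\ttau^\phi_{V,U} = \eta^\phi \, \gamma(\delta\tau_{U'})$ in $\HTtwo^*(S_{V',U'}, S_{U'})$, and by (\ref{E:TauUsplitting}) we have $\delta\tau_{U'} = (-1)^{a_1'}\tau_{U_1'} \smile \delta_2\tau_{U_2'}$. Combining with the split form of $f^*\ttau^\phi_{V,U}$ and the analogous relation $f_2^*(\ttau^{\phi_2}_{V_2,U_2}) = \eta_2^{\phi_2}\gamma(\delta_2\tau_{U_2'})$, matching coefficients of the generator determines $\eta^\phi$ in terms of $\eta_2^{\phi_2}$ and the contribution from the first factor $f_1^*(\tau_{V_1,U_1})$, pulled back to $\P(V_1')$ and $\P(V_2')$ respectively. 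Then I apply $(\iota_i')^*$. For $(\iota_1')^*(\eta^\phi)$: the restriction of $f_1^*(\tau_{V_1,U_1})$ to $\P(V_1')$ should vanish — geometrically, because the chamber $\phi = (0,\phi_2)$ has trivial $U_1$-component, so the relevant class lies in the image of a pullback that dies on the $\P(V_1')$ stratum; concretely this is the observation that $f_1^*\tau_{V_1,U_1}$ is $\tau_{V_1}$-type and $i_{V_1}^*\tau_{V_1,U_1} = \tau_{V_1}\cdot e(U_1)$ has a factor that restricts to zero on $\P(V_1')$ after the $S_{U_1'}$-quotient, or more simply because $\delta\tau_{U_1'} = 0$ as shown in the diagram before (\ref{E:TauUsplitting}). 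For $(\iota_2')^*(\eta^\phi)$: on the $\P(V_2')$ stratum, $V_1$ becomes the normal bundle factor $N_2 = V_1 \otimes \Ocal_{\P(V_2')}(1)$ (this is Lemma~\ref{L:NiCalc} with the roles as stated), $U_1' $ contributes nothing since $f_1|_{U_1'}$ is the inclusion, and $H_1^+$ contributes its Euler class $e(H_1^+)$; the remaining $\P(V_2')$-data is exactly $\eta_2^{\phi_2}$. Assembling, $(\iota_2')^*(\eta^\phi) = e(N_2)\, e(H_1^+)\, \eta_2^{\phi_2}$.

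The main obstacle will be bookkeeping the several Thom isomorphisms and the $\gamma$-identification simultaneously: one must be careful that the class $\eta^\phi \in \HTtwo^*(\P(V'))$ extracted from $f^*\ttau^\phi_{V,U}$ via $\gamma$ and Lemma~\ref{L:HSuvCohom} genuinely equals the image of $f_1^*(\tau_{V_1,U_1}) \cdot (\text{lift of } \eta_2^{\phi_2})$ under the right projective-bundle pullbacks, keeping track of the sign $(-1)^{a_1'}$ from (\ref{E:TauUsplitting}) and verifying it cancels (or is absorbed into orientation conventions). A secondary subtlety is identifying $f_1^*(\tau_{V_1,U_1})$ restricted to each fixed stratum: on $\P(V_1')$ this requires knowing that the finite dimensional monopole map $f_1$, being an honest map of sphere bundles fixing $B$, pulls the Thom class back to a class whose $\P(V_1')$-restriction vanishes — this is where the hypothesis that $\phi$ has zero first component is essential and must be used cleanly. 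Once these identifications are in place, the two displayed formulas follow by comparing coefficients of the free generators $\gamma(\delta\tau_{U'})$, respectively $\gamma(\delta_2\tau_{U_2'})$, over the respective $\HTtwo^*(\P(\cdot))$-module structures.
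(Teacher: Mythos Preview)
Your overall strategy --- split $\ttau^\phi_{V,U}$ via Lemma~\ref{L:RefinedThomSplit}, use naturality of the external cup product under $f = f_1 \wedge f_2$, then restrict via $(\iota_i')^*$ --- is the same plan the paper follows, and your argument for $(\iota_2')^*(\eta^\phi)$ is correct in outline. However, your reasoning for the vanishing $(\iota_1')^*(\eta^\phi) = 0$ identifies the wrong factor as the source.

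You claim the restriction of $f_1^*(\tau_{V_1,U_1})$ to the $\P(V_1')$ stratum vanishes. It does not: under $\iota_1' : S_{V_1',U'} \hookrightarrow S_{V',U'}$, the first smash factor $S_{V_1',U_1'}$ is mapped by the identity, so $f_1^*(\tau_{V_1,U_1}) = \degSone(f_1)\,\tau_{V_1',U_1'}$ survives intact and is certainly nonzero. What $\iota_1'$ collapses is the \emph{second} smash factor, sending $S_{V_2',U_2'}$ down to $S_{U_2'}$. Hence $(\iota_1')^*\bigl(f_1^*\tau_{V_1,U_1} \smile f_2^*\ttau^{\phi_2}_{V_2,U_2}\bigr)$ involves restricting $f_2^*\ttau^{\phi_2}_{V_2,U_2} \in \HT^*(S_{V_2',U_2'}, S_{U_2'})$ along $i_{U_2'} : S_{U_2'} \hookrightarrow S_{V_2',U_2'}$, and this restriction lands in $\HT^*(S_{U_2'}, S_{U_2'}) = 0$. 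That is precisely how the paper obtains the vanishing: it factors $(f_1 \wedge i_{U_2'})^*$ through $(\id \wedge i_{U_2'})^*$ and observes that the target relative group is zero. The hypothesis $\phi = (0,\phi_2)$ is used only to enable the splitting $\ttau^\phi_{V,U} = \tau_{V_1,U_1} \smile \ttau^{\phi_2}_{V_2,U_2}$; after that, the vanishing is a consequence of the refined Thom data living entirely on the second factor. Once you correct which factor carries the vanishing, your argument goes through and agrees with the paper's.
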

\begin{proof}
    We start with the first equality. The inclusion map $\iota_1 : S_{V_1} \to S_V$ extends to an inclusion $\iota_1 : S_{V_1, U} = S_{V_1} \wedge_B S_U \to S_V \wedge_B S_U = S_{V, U}$ by taking the smash product of $\iota_1$ and the identity on $S_U$. Let $\iota_1'$ denote the same extension $\iota_1' : S_{V_1', U'} \to S_{V', U'}$. Let $i_{U_j'} : U_j' \to U_j$ denote inclusions for $j \in \{1,2\}$. There is a map $f_1 \wedge i_{U_2'} : S_{V_1', U'} \to S_{V_1, U}$ which is defined below.
    \begin{equation}\label{E:f1wedgei2}
        \begin{tikzcd}
         S_{V_1', U'} = S_{V_1', U_1'} \wedge_B S_{U_2'} \arrow{r}{f_1 \wedge i_{U_2'}} & S_{V_1, U_1} \wedge_B S_{U_2} = S_{V_1, U} 
        \end{tikzcd}
    \end{equation}
    Since $f_2|_{U_2'} = i_{U_2'}$, the following diagram commutes.
    \[
    \begin{tikzcd}
        S_{V', U'} \arrow{r}{f_1 \wedge f_2} & S_{V, U}\\
        S_{V_1', U'} \arrow{u}{\iota_1'} \arrow{r}{f_1 \wedge i_{U_2'}} & S_{V_1, U} \arrow[swap]{u}{\iota_1}
    \end{tikzcd}
    \]
    Note that each map fixes $S_{U'}$, hence we obtain the below diagram in equivariant cohomology.
    \begin{equation}\label{E:H*diagram}
        \begin{tikzcd}
            \HT^*(S_{V', U'}, S_{U'}) \arrow[swap]{d}{(\iota_1')^*} &  \HT^*(S_{V, U}, S_{U'}) \arrow[swap]{l}{(f_1 \wedge f_2)^*} \arrow{d}{\iota_1^*}\\
            \HT^*(S_{V_1', U'}, S_{U'})   & \HT^*(S_{V_1, U}, S_{U'}) \arrow{l}{(f_1 \wedge i_{U_2'})^*} 
        \end{tikzcd}
    \end{equation}
    Let $\ttau^\phi_{V_1, U} \in \HT^*(S_{V_1, U}, S_{U'})$ be the refinement of $\tau_{V_1, U} \in \HT^*(S_{V_1, U}, B)$ induced by $\phi$. By Lemma \ref{L:RefinedThomSplit} we have 
    \begin{align*}
        \iota_1^*(\ttau_{V,U}^\phi) &= \iota_1^*(\tau_{V_2} \smile \ttau^\phi_{V_1, U})\\
        &= e(V_2) \cdot \ttau^\phi_{V_1, U}
    \end{align*}
    Note that $\tau_{V_2} \in \HTtwo^{2a_2}(S_{V_2}, B)$ has even degree, hence the sign is correct. The map $(f_1 \wedge i_{U_2'})^*$ factors as follows 
    \begin{equation}
        \begin{tikzcd}
            \HT^*(S_{V_1', U'}, S_{U'}) & \arrow{l}[swap]{(f_1 \wedge \id)^*} \HT^*(S_{V_1, U_1} \wedge_B S_{U_2'}, S_{U'}) & \arrow{l}[swap]{(\id \wedge i_{U_2'})^*} \HT^*(S_{V_1, U}, S_{U'}) \arrow[bend right=25,swap]{ll}{(f_1 \wedge i_{U_2'})^*}
        \end{tikzcd}
    \end{equation}
    Again $\ttau^\phi_{V_1, U} = \tau_{V_1, U_1} \smile \ttau^{\phi_2}_{U_2}$ where $\ttau^{\phi_2}_{U_2} \in \HT^*(S_{U_2}, S_{U_2'})$ is the refined Thom class induced by $\phi_2$. As in (\ref{E:ExternalCupNaturality}), the naturality of the external cup product gives the following commutative diagram.
    \[
    \begin{tikzcd}
        \HT^*(S_{V_1, U_1} \wedge_B S_{U_2'}, S_{U'}) & \arrow{l}[swap]{(\id \wedge i_{U_2'})^*} \HT^*(S_{V_1, U}, S_{U'})\\
        \HT^*(S_{V_1, U_1}, B) \otimes_{\HT^*(B)} \HT^*(S_{U_2'}, S_{U_2'}) \arrow{u}{\smile} & \arrow{l}[swap]{\id \otimes i_{U_2'}^*} \HT^*(S_{V_1, U_1}, B) \otimes_{\HT^*(B)} \HT^*(S_{U}, S_{U'}) \arrow{u}[swap]{\smile}
    \end{tikzcd}
    \]
    Since $\HT^*(S_{U_2'}, S_{U_2'}) = 0$, it follows that 
    \begin{align*}
        (\id \wedge i_{U_2'})^*(\iota_1^* \ttau^\phi_{V,U}) &= (\id \wedge i_{U_2'})^*(e(V_2)\ttau_{V_1, U}^\phi) \\
        &= e(V_2)(\id\wedge i_{U_2'})^*(\tau_{V_1, U_1} \smile \ttau_{U_2}^{\phi_2})\\
        &= 0.
    \end{align*}
    It follows from the commutativity of (\ref{E:H*diagram}) that 
    \begin{align*}
        ((\iota_1')^*\eta^\phi)\delta \tau_{U'} &= (\iota_1')^*(f^* \ttau_{V,U}^\phi)\\
        &= (f_1 \wedge i_{U_2'})^*(\iota_1^* \tau_{V,U}^\phi)\\
        &= 0.
    \end{align*}
    Consequently $(\iota_1')^*\eta^\phi = 0$.

    For the second equality, there is a similar commutative diagram.
    \[
    \begin{tikzcd}
        \HT^*(S_{V', U'}, S_{U'}) \arrow[swap]{d}{(\iota_2')^*} &  \HT^*(S_{V, U}, S_{U'}) \arrow[swap]{l}{(f_1 \wedge f_2)^*} \arrow{d}{\iota_2^*}\\
        \HT^*(S_{V_2', U'}, S_{U'})   & \HT^*(S_{V_2, U}, S_{U'}) \arrow{l}{(i_{U_1'} \wedge f_2)^*}
    \end{tikzcd}
    \]
    We have that $\iota_2^*\ttau^\phi_{V,U} = e(V_1)\ttau^\phi_{V_2, U}$ for $\ttau^\phi_{V_2, U} \in \HT^*(S_{V_2, U}, S_{U'})$. Lemma \ref{L:RefinedThomSplit} ensures that $\ttau^\phi_{V_2, U}$ factors as $\ttau^\phi_{V_2, U} = \tau_{U_1} \smile \ttau^{\phi_2}_{V_2, U_2}$ under the external cup product 
    \begin{align*}
        \smile \; : \HT^*(S_{U_1}, B) \otimes_{\HT^*(B)} \HT^*(S_{V_2, U}, S_{U'}) \to \HT^*(S_{V_2, U}, S_{U'}).
    \end{align*}
    The calculation continues as follows.
    \begin{align*}
        ((\iota_2')^*\eta^\phi)\delta \tau_{U'} &= (\iota_2')^*(f^* \ttau_{V,U}^\phi)\\
        &= (i_{U_1'} \wedge f_2)^*(\iota_2^* \ttau_{V,U}^\phi)\\
        &= (i_{U_1'} \wedge f_2)^*(e(V_1)\ttau^\phi_{V_2, U})\\
        &= e(V_1)(i_{U_1'} \wedge f_2)^*(\tau_{U_1} \smile \ttau^{\phi_2}_{V_2, U_2})\\
        &= e(V_1)i_{U_1'}^*(\tau_{U_1}) \smile f_2^*(\ttau^{\phi_2}_{V_2, U_2})
    \end{align*}
    Recall that $H_1^+$ is the fibre of the bundle $U_1 \to U_1'$. Applying the formula (\ref{E:TauUsplitting}) we obtain
    \begin{align*}
        ((\iota_2')^*\eta^\phi)\delta \tau_{U'} &= e(V_1)e(H_1^+)\tau_{U_1'} \smile \eta_2^{\phi_2} \delta \tau_{U_2'}\\
        &= e(V_1)e(H_1^+)\eta_2^{\phi_2} \delta \tau_{U'}
    \end{align*}
    Thus $(\iota_2')^*\eta^\phi = e(V_1)e(H_1^+)\eta_2^{\phi_2}$. Interpreting $V_1$ as a vector bundle over $\P(V_2')$, the product $\P(V_2') \times_B V_1 \to \P(V_2')$ can be identified as the normal bundle $N_2 \to \P(V_2')$ of $\P(V_2') \subset \P(V_2' \oplus V_1)$. Applying the same argument as Lemma \ref{L:NiCalc} shows that $N_2$ is equivariantly isomorphic to $(V_1 \otimes \Ocal_{\P(V_2')}(1))_1$. Since $e(V_1)$ is the $\T$-equivariant Euler class of $V_1$, it follows that $(\iota_2')^*\eta^\phi = e(N_2)e(H_1^+)\eta_2^{\phi_2}$
\end{proof}
Let $M_2$ denote the virtual vector bundle $M_2 = N_2' - N_2$ over $\P(V_2)$. Notice that 
\begin{align}\label{E:M2Def}
M_2 &= V_1' \otimes \Ocal_{\P(V_2')}(1) - V_1 \otimes \Ocal_{\P(V_2')}(1) \nonumber\\
 &= D_1 \otimes \Ocal_{\P(V_2')}(1).
\end{align}
Here $D_1 = V'_1 - V_1$ is a virtual vector bundle over $\P(V_2)$ of rank $d_1 = a_1' - a_1$. By Lemma \ref{L:e(N_i)calc} we have
\begin{align*}\label{E:M2Euler}
    e(M_2)\inv = \sum_{j \geq 0} s_j(D_1 \otimes \Ocal_{\P(V_2')}(1))y^{-d_1 - j}.
\end{align*}
Note that this sum is finite since the $j$-th Segre class is an element of $H^*(B)$ and $B$ is a finite dimensional manifold.
\begin{lemma}\label{L:SegreCalcD1}
    The $j$-th Segre class of the virtual bundle $D_1 \otimes \Ocal_{\P(V_2)}(1)$ is given by the formula
    \begin{align*}
        s_j(D_1 \otimes \Ocal_{\P(V_2)}(1)) &= \sum_{l=0}^{j} {{-d_1-l}\choose {j - l}}s_l(D_1)x^{j-1}
    \end{align*}
    where $x = c_1(\Ocal_{\P(V_2)})$.
\end{lemma}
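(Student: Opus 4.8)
The plan is to derive this by the splitting principle, reducing to an elementary manipulation of the total Segre class as a formal series in Chern roots; this is a standard type of identity (cf.\ the behaviour of Segre classes under tensoring by a line bundle in Fulton's intersection theory), but I will give a self-contained argument.

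First I would pass to an auxiliary flag bundle over $\P(V_2)$ on which the pullback of $D_1$ splits; since Segre classes are natural and the pullback along such a bundle is injective on cohomology, it suffices to prove the identity there. So write $D_1$ formally as a difference of line bundles, with the $+1$-summands having first Chern classes $\beta_i$ and the $-1$-summands having first Chern classes $\gamma_p$, and with the number of $+1$-summands exceeding the number of $-1$-summands by $d_1 = \rank D_1$. Then $D_1 \otimes \Ocal_{\P(V_2)}(1)$ has formal Chern roots $\beta_i + x$ with multiplicity $+1$ and $\gamma_p + x$ with multiplicity $-1$, so from the defining relation $s(E) = c(E)\inv$ we get
\[
s\bigl(D_1 \otimes \Ocal_{\P(V_2)}(1)\bigr) = \frac{\prod_p (1 + \gamma_p + x)}{\prod_i (1 + \beta_i + x)}.
\]

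The key step is to factor out the common $(1+x)$. Writing $1 + \alpha + x = (1+x)\bigl(1 + \tfrac{\alpha}{1+x}\bigr)$ for every root $\alpha$, and using that the $+1$-roots outnumber the $-1$-roots by exactly $d_1$, the prefactors combine to $(1+x)^{-d_1}$ and the remainder is the total Segre class of $D_1$ with every root rescaled by $(1+x)^{-1}$. Since the degree-$l$ component $s_l(D_1)$ is homogeneous of degree $l$ in the Chern roots, this rescaling multiplies it by $(1+x)^{-l}$, so
\[
s\bigl(D_1 \otimes \Ocal_{\P(V_2)}(1)\bigr) = \sum_{l \geq 0} \frac{s_l(D_1)}{(1+x)^{d_1 + l}} = \sum_{l \geq 0} s_l(D_1) \sum_{k \geq 0} \binom{-d_1-l}{k} x^{k}.
\]
Taking the component in $H^{2j}$, i.e.\ setting $k = j - l$, yields
\[
s_j\bigl(D_1 \otimes \Ocal_{\P(V_2)}(1)\bigr) = \sum_{l=0}^{j} \binom{-d_1-l}{j-l}\, s_l(D_1)\, x^{j-l},
\]
which is the stated formula (the exponent of $x$ being $j-l$).

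The computation is routine once set up; the only point needing care is the justification of the splitting principle for the \emph{virtual} bundle $D_1$ rather than a genuine vector bundle, together with the bookkeeping of the $(1+x)$-prefactor, where the virtual rank $d_1 = a_1' - a_1$ enters. All the sums are finite because $\P(V_2)$ is a finite-dimensional manifold, so only finitely many Segre classes and powers of $x$ are nonzero and no convergence issue arises.
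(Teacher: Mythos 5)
Your proof is correct and follows essentially the same approach as the paper's: splitting principle applied to a virtual bundle, followed by binomial expansion of $(1+x)^{-d_1 - l}$. The paper states the tensor-by-line-bundle formula for genuine bundles and asserts it extends to virtual ones before applying $s_j(D_1 \otimes L) = c_j(-D_1 \otimes L)$; you instead derive the extension directly by factoring $(1+x)$ out of the ratio of products defining the total Segre class, which makes the virtual-bundle step a bit more self-contained but is the same underlying computation. One small point in your favour: your derivation correctly produces the exponent $x^{j-l}$, whereas the statement as printed in the paper has $x^{j-1}$, which is a typo (the paper's own proof and its later use in the formula for $\degSone(f_1)$ both require $x^{j-l}$).
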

\begin{proof}
    Suppose that $V \to B$ is an $n$-dimensional vector bundle with Chern roots $\alpha_1, ..., \alpha_n$. Then by the splitting principle, the Chern roots of $V \otimes \Ocal_{\P(V_2)}(1)$ are $\alpha_1 + x, ..., \alpha_n + x$ and we have 
    \begin{align*}
        c_j(V \otimes \Ocal_{\P(V_2)}(1)) &= \sum_{l=0}^j c_l(V) x^{j - l} {{n - l} \choose {j - l}}.
    \end{align*}
    This formula also holds if $V = V_1 - V_2$ is a rank $n$ virtual bundle. In this case, $s(V) = c(V)\inv = c(-V)$ and $s_j(V) = c_j(-V)$. Applying this formula to the virtual vector bundle $D_1$ of rank $d_1$, we obtain
    \begin{align*}
        s_j(D_1 \otimes \Ocal_{\P(V_2)}(1)) &= c_j((-D_1) \otimes \Ocal_{\P(V_2)}(1))\\
        &= \sum_{l=0}^{j} {{-d_1-l}\choose {j - l}}c_l(-D_1)x^{j-1}\\
        &= \sum_{l=0}^{j} {{-d_1-l}\choose {j - l}}s_l(D_1)x^{j-1}.
    \end{align*} 
\end{proof}
The map $f_1 : S_{V_1',U_1'} \to S_{V_1, U_1}$ induces a map $f_1^* : \HTtwo(S_{V_1, U_1}, B) \to \HTtwo(S_{V_1', U_1'}, B)$ which sends $\tau_{V_1, U_1}$ to some $\HTtwo^*(B)$-multiple of $\tau_{V_1', U_1'}$ in $\HTtwo(S_{V_1', U_1'}, B)$. Define the degree of $f_1$ to be the class $\degSone(f_1) \in \HTtwo(S_{V_1', U_1'}, B)$ such that 
\begin{align}
    f_1^* (\tau_{V_1, U_1}) &= \degSone(f_1) \tau_{V_1', U_1'}.
\end{align}
To calculate $\degSone(f_1)$, consider the following commutative diagram.
\[
\begin{tikzcd}
    S_{V'_1, U'_1} \arrow{r}{f_1} & S_{V_1,U_1}\\
    S_{U'_1} \arrow{u}{i_{U'_1}} \arrow{r}{f_1|_{U'_1}} & S_{U_1} \arrow{u}[swap]{i_{U_1}}
\end{tikzcd}
\]
Recall that $f|_{U'}$ is the inclusion and that $U_1 = U_1' \oplus H_1^+$, hence 
\begin{align*}
    \degSone(f_1) e(V_1') \tau_{U_1'} &= i_{U'}^*(\degSone(f_1) \tau_{V_1', U_1'})\\
    &= i_{U'}^*(f_1^* \tau_{V_1, U_1})\\
    &= f_1|_{U_1'}^* (i_{U_1}^*\tau_{V_1, U_1})\\
    &= f_1|_{U_1'}^* (e(V_1)\tau_{U_1})\\
    &= e(H_1^+)e(V_1)\tau_{U_1'}.
\end{align*}
It follows that 
\begin{align*}
    \degSone(f_1) &= e(H_1^+)e(-D_1)
\end{align*}
where $D_1$ is the virtual bundle $D_1 = V_1' - V_1$. The calculation in the proof of Lemma \ref{L:NiCalc} extends to virtual bundles and shows that
\begin{align}\label{E:DegSonefone}
    \degSone(f_1) &= e(H^+_1)\sum_{l=0}^{-d_1} s_l(D_1)x^{-d_1 - l}.
\end{align}
This formula for the degree of $f_1$ can be applied to derive cohomological proof of Donaldson's theorem \cite[Theorem 3.1]{BaragliaConstraints}. We will use it to calculate the Seiberg-Witten invariant of a smash product, from which the Seiberg-Witten connected sum formula follows immediately. Theorem \ref{T:FSWofWedge} and Theorem \ref{T:FSW-CSF} below are new results.
\begin{theorem}\label{T:FSWofWedge}
    Suppose $f_i : S_{V_i', U_i'} \to S_{V_i, U_i}$ are finite dimensional monopole maps for $i \in \{1,2\}$ and that $\phi_2 : B \to (U_2 - U_2')$ is a chamber for $f_2$. Let $\phi = (0, \phi_2)$ be a chamber for the monopole map $f = f_1 \wedge_B f_2 : S_{V', U'} \to S_{V, U}$ and write $U_1 = U_1' \oplus H_1^+$. Then 
    \begin{align}
        \SWfphi_m &= \SWfphitwo(x^m \degSone(f_1)).
    \end{align}
\end{theorem}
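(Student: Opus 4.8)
The plan is to feed the localised pushforward formula (\ref{E:SWLocCalc}) for the $\T_2$-generalised invariant $\SWhatfphi_m$ into the pullback computations of Lemma~\ref{L:i*etaCalc}, and then recover the ordinary invariant by specialising $y=0$. First I would apply Lemma~\ref{L:i*etaCalc}: since $(\iota_1')^*\eta^\phi=0$ we get $(\iota_1')^*\teta^\phi=0$, so the $\P(V_1')$-term of (\ref{E:SWLocCalc}) vanishes outright, while $(\iota_2')^*\teta^\phi=e(N_2)e(H_1^+)\eta_2^{\phi_2}$ in $S\inv\HTtwo^*(\P(V_2'))$. Using $e(N_2')\inv e(N_2)=e(M_2)\inv$ with $M_2=N_2'-N_2=D_1\otimes\Ocal_{\P(V_2')}(1)$ from (\ref{E:M2Def}), this leaves
\[
\SWhatfphi_m=(\pi_2')_*\big(x^m\, e(M_2)\inv\, e(H_1^+)\, \eta_2^{\phi_2}\big),
\]
where $(\pi_2')_*=(\pi_{\P(V_2')})_*$.

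Next I would exploit that the left-hand side is a \emph{genuine} class in $\HTtwo^*(B)=H^*(B)[y]$, hence has no negative powers of $y$. Expanding $e(M_2)\inv=\sum_{j\ge0}s_j(M_2)\,y^{-d_1-j}$ as in Lemma~\ref{L:e(N_i)calc} and splitting off the terms with strictly negative $y$-exponent (those with $j>-d_1$), the extended pushforward $(\pi_2')_*$---which commutes with multiplication by powers of $y$---carries that part into classes supported in strictly negative $y$-degrees; since the total $\SWhatfphi_m$ is a polynomial in $y$, this contribution must vanish. Hence only the terms with $0\le j\le -d_1$ survive (so both sides vanish when $d_1>0$), and specialising $y=0$ via $\SWfphi_m=\SWhatfphi_m|_{y=0}$ selects the $j=-d_1$ term:
\[
\SWfphi_m=(\pi_2')_*\big(x^m\, s_{-d_1}(M_2)\, e(H_1^+)\, \eta_2^{\phi_2}\big).
\]
By Lemma~\ref{L:SegreCalcD1} the binomial coefficients occurring in $s_{-d_1}(D_1\otimes\Ocal_{\P(V_2')}(1))$ are all equal to $1$, so $s_{-d_1}(M_2)=\sum_{l=0}^{-d_1}s_l(D_1)x^{-d_1-l}$ and hence $e(H_1^+)\,s_{-d_1}(M_2)=\degSone(f_1)$ by (\ref{E:DegSonefone}). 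Substituting this and recalling $\SWfphitwo(\alpha)=(\pi_{\P(V_2')})_*(\alpha\,\eta_2^{\phi_2})$ gives $\SWfphi_m=\SWfphitwo(x^m\degSone(f_1))$.

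The step I expect to be most delicate is the passage to the first displayed formula for $\SWhatfphi_m$: it rests on pinning down the $S^1$-weights of the normal bundles $N_2$ and $N_2'$---a difference of two weight-one virtual bundles, in the style of Lemma~\ref{L:NiCalc}---so that the identification $M_2=D_1\otimes\Ocal_{\P(V_2')}(1)$ and the ensuing equivariant Euler-class manipulations are correct. The ``strictly negative $y$-powers die'' observation is then the key conceptual point reducing $\SWhatfphi_m$ to a single Segre-class term at $y=0$; everything else---the vanishing of the $\P(V_1')$-contribution, the passage between localised and unlocalised rings via (\ref{E:LocExt}), and the final Segre-class simplification---follows formally from Lemmas~\ref{L:i*etaCalc}, \ref{L:e(N_i)calc} and \ref{L:SegreCalcD1} together with the localised formula of Theorem~\ref{T:LocFormula}.
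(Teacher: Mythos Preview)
Your proposal is correct and follows essentially the same route as the paper: apply the localised formula (\ref{E:SWLocCalc}), kill the $\P(V_1')$-term via Lemma~\ref{L:i*etaCalc}, rewrite $e(N_2')^{-1}e(N_2)$ as $e(M_2)^{-1}$, expand in powers of $y$, and set $y=0$ to isolate the $j=-d_1$ Segre term, which then matches $\degSone(f_1)$ by (\ref{E:DegSonefone}). Your extra remark that the negative-$y$ contributions must vanish because $\SWhatfphi_m\in H^*(B)[y]$ is a valid way to justify the specialisation step (the paper simply evaluates at $y=0$ without spelling this out), and your observation that the binomial coefficients $\binom{-d_1-l}{-d_1-l}=1$ is exactly what makes Lemma~\ref{L:SegreCalcD1} collapse to the desired sum; both points are implicit in the paper's argument.
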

\begin{proof}
    From (\ref{E:SWLocCalc}) we have 
    \begin{align*}
        \SWhatfphi_m &= (\pi_1')_*(x^m e(N_1')\inv (\iota_1')^*\teta^\phi) + (\pi_2')_*(x^m e(N_2')\inv (\iota_2')^*\teta^\phi).
    \end{align*}
    Applying Lemma \ref{L:i*etaCalc} gives
    \begin{align*}
        \SWhatfphi_m &= (\pi_2')_*(x^m e(N_2')\inv e(N_2)e(H_1^+)\eta^{\phi_2})\\
        &= \SWhat^{f_2, \phi_2}(x^m e(H_1^+)e(M_2)\inv )
    \end{align*}
    Note that $M_2 = N_2 - N_2'$ is a complex virtual bundle and therefore has even rank. Apply (\ref{E:M2Euler}) to obtain
    \begin{align*}
        \SWhatfphi_m &= \SWhat^{f_2, \phi_2}\left(x^m e(H_1^+)\sum_{j \geq 0} s_j(D_1 \otimes \Ocal_{\P(V_2')}(1))y^{-d_1 - j} \right)\\
        &= \sum_{j \geq 0} \SWhat^{f_2, \phi_2}\left(x^m e(H_1^+) s_j(D_1 \otimes \Ocal_{\P(V_2')}(1)) \right)y^{-d_1 - j}.
    \end{align*}
    Recall that the ordinary Seiberg Witten invariant $\SWfphi_m$ is given by evaluating the $\T_2$-generalised Seiberg-Witten invariant at $y = 0$. It follows that 
    \begin{align*}
        \SWfphi_m &= SW^{f_2, \phi_2}\left(x^m e(H_1^+)s_{-d_1}(D_1 \otimes \Ocal_{\P(V_2')}(1)) \right).
    \end{align*}
    Apply Lemma \ref{L:SegreCalcD1} and equation (\ref{E:DegSonefone}) to obtain 
    \begin{align*}
        \SWfphi_m &= SW^{f_2, \phi_2}\left(x^m e(H_1^+)\left(\sum_{l=0}^{-d_1} s_l(D_1)x^{-d_1-1}\right)\right) \\
        &= \SWfphitwo(x^m \degSone(f_1)).
    \end{align*}
\end{proof}
Substituting in $\degSone(f_1)$ gives an alternative presentation of the formula
\begin{align*}
    \SWfphi_m &= e(H_1^+)\sum_{l=0}^{-d_1} s_l(D_1) SW^{f_2, \phi_2}(x^{m-d_1-1}).
\end{align*}
\begin{theorem}[Families Seiberg-Witten Connected Sum Formula]\label{T:FSW-CSF}
    For $j \in \{1,2\}$, let $E_j \to B$ be a 4-manifold family equipped with a \spinc structure $\sfrak_j$ on the vertical tangent bundle. Let $i_j : B \to E_j$ be a section with normal bundle $V_j$ and assume that $\vphi : V_1 \to V_2$ is an orientation reversing isomorphism satisfying $\vphi(i_1^*(\sfrak_{E_1})) \cong i_2^*(\sfrak_{E_2})$. Set $E = E_1 \#_B E_2$ and let $\phi_2$ be a chamber for $\mu_2$ with $\phi = (0, \phi_2)$ defining a chamber for $\mu$. Then 
    \begin{align}\label{E:FSW-CSF}
        SW^{\mu, \phi}_m &= SW^{\mu_2,\phi_2}(x^m \degSone(\mu_1)).
    \end{align}
\end{theorem}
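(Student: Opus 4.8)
The plan is to deduce the statement as a corollary of the Bauer--Furuta connected sum formula (Theorem \ref{T:FamBFCSF}) together with the smash product formula for finite dimensional monopole maps (Theorem \ref{T:FSWofWedge}). Concretely, I would replace the infinite dimensional Seiberg--Witten monopole maps $\mu = \mu_E$, $\mu_1 = \mu_{E_1}$, $\mu_2 = \mu_{E_2}$ by finite dimensional approximations, recognise the approximation of $\mu_E$ as a fibrewise smash product of approximations of $\mu_1$ and $\mu_2$, and then quote Theorem \ref{T:FSWofWedge}. To set this up, I would first record that for a fibrewise connected sum the harmonic one-forms split fibrewise, $H^1(X;\R) \cong H^1(X_1;\R) \oplus H^1(X_2;\R)$, so that the Jacobian bundle of $E$ is the fibre product $\Jcal_E \cong \Jcal_{E_1} \times_B \Jcal_{E_2}$; pulling $\mu_{E_1}$ and $\mu_{E_2}$ back to $\Jcal_E$, Theorem \ref{T:FamBFCSF} gives the equality of families Bauer--Furuta classes $[\mu_E] = [\mu_{E_1}] \wedge_{\Jcal_E} [\mu_{E_2}]$ over $\Jcal_E$.

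Next I would carry out the finite dimensional reduction. Choosing admissible $S^1$-invariant finite dimensional subbundles $V_i \oplus U_i$ of the targets of $\mu_{E_i}$ and setting $V_i' = (l_i)_\C\inv(V_i)$, $U_i' = (l_i)_\R\inv(U_i)$ as in the excerpt, the restrictions $f_i := (\mu_{E_i})_{V_i', U_i'} : S_{V_i', U_i'} \to S_{V_i, U_i}$ are finite dimensional monopole maps over $\Jcal_E$ with $D_i = V_i' - V_i$ the virtual index bundle of the Dirac operator on $X_i$ and $H_i^+$ the bundle of self-dual harmonic two-forms. Taking $V = V_1 \oplus V_2$, $U = U_1 \oplus U_2$, $V' = V_1' \oplus V_2'$, $U' = U_1' \oplus U_2'$, the subbundle $V \oplus U$ is admissible for $\mu_E$, and — since finite dimensional approximation is compatible with fibrewise smash products (this is precisely the mechanism by which $\wedge_\Jcal$ is defined on Bauer--Furuta classes in \cite{Part1} and which underpins Theorem \ref{T:FamBFCSF}) — a finite dimensional monopole map representing $[\mu_E]$ is stably homotopic to $f := f_1 \wedge_{\Jcal_E} f_2$. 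Under this identification the chamber $\phi = (0, \phi_2)$ for $\mu_E$ becomes the chamber $(0,\phi_2)$ for $f$ in the sense of Theorem \ref{T:FSWofWedge}, with $\phi_2$ the chamber for $f_2$ induced by the given chamber for $\mu_{E_2}$.

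It then remains to chain together the invariance statements and apply Theorem \ref{T:FSWofWedge}. By \cite[Theorem 2.24]{BaragliaKonnoBFandSW}, $SW^{\mu_E, \phi}_m = SW^{f, \phi}_m$, and since $SW^{f, \phi}_m$ depends only on the stable homotopy class of $f$ (\cite[Proposition 2.18]{BaragliaKonnoBFandSW}), $SW^{f, \phi}_m = SW^{f_1 \wedge f_2, \phi}_m$. Theorem \ref{T:FSWofWedge} then gives
\[
    SW^{\mu_E, \phi}_m = SW^{f_2, \phi_2}\bigl(x^m\, \degSone(f_1)\bigr) = SW^{\mu_{E_2}, \phi_2}\bigl(x^m\, \degSone(f_1)\bigr),
\]
where $x^m\,\degSone(f_1)$ is regarded as an element of $H^*(\P(V_2'))$ by pullback, and the second equality uses $SW^{f_2, \phi_2} = SW^{\mu_{E_2}, \phi_2}$. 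Finally, formula (\ref{E:DegSonefone}) shows $\degSone(f_1) = e(H_1^+)\sum_{l=0}^{-d_1} s_l(D_1)\, x^{-d_1 - l}$ depends only on $D_1$ and $H_1^+$, hence only on $(E_1, \sfrak_1)$ and not on the choice of approximation, so it is legitimate to denote it $\degSone(\mu_1)$; this yields (\ref{E:FSW-CSF}).

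The part that requires genuine care — though no essentially new idea — is the bookkeeping underlying these identifications. One must check that the orientation of $\det(\ind l_\R)$ used to define $SW^{\mu_E, \phi}_m$ agrees with the product of the orientations of $\det(\ind (l_1)_\R)$ and $\det(\ind (l_2)_\R)$ used on the right-hand side, so that the two sides of (\ref{E:FSW-CSF}) are compared with consistent signs; that the $+iF^+_{A_0}$ shift introduced in Example \ref{Ex:FamMu} to enforce the vanishing axiom is consistent across the connected sum (it is, since $F^+_{A_0}$ and the shift are additive over the summands); and that the transversality hypothesis on chambers (Remark \ref{R:transverseAssumps}) for $\mu_E$ follows from those for $\mu_{E_1}$ and $\mu_{E_2}$. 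Each of these is handled exactly as in the Bauer--Furuta setting of \cite{Part1}. Modulo this bookkeeping, Theorem \ref{T:FSW-CSF} is a formal consequence of Theorems \ref{T:FamBFCSF} and \ref{T:FSWofWedge}.
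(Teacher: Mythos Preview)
Your proposal is correct and follows essentially the same approach as the paper: the paper's proof is the two-line deduction ``Theorem~\ref{T:FamBFCSF} gives $[\mu] = [\mu_1] \wedge_\Jcal [\mu_2]$ and thus the result follows immediately from Theorem~\ref{T:FSWofWedge}'', and you have simply unpacked the finite-dimensional-approximation and stable-homotopy-invariance steps that are implicit in that sentence. The additional bookkeeping you flag (Jacobian splitting, orientations, the $+iF^+_{A_0}$ shift) is reasonable care but not part of the paper's argument at this point.
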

\begin{proof}
    Theorem \ref{T:FamBFCSF} gives $[\mu] = [\mu_1] \wedge_\Jcal [\mu_2]$ and thus the result follows immediately from Theorem \ref{T:FSWofWedge}.
\end{proof}
Theorem \ref{T:FSW-CSF} has many potential applications, one such instance being the ability to detect diffeomorphisms which are topologically isotopic to the identity, but not smoothly isotopic. Let $f : X \to X$ be a diffeomorphism and let $E_f \to S^1$ denote the mapping torus of $f$, which is a 4-manifold family over the circle. Assume that $X$ is simply connected. By work of Freedman and Quinn, $f$ is topologically isotopic to the identity if and only if it acts trivially on $H^2(X ; \Z)$ \cite{Quinn}. This is an easily verifiable condition, however detecting smooth isotopy is more difficult. 

One obstruction to $f$ being smoothly isotopic to the identity is the smoothability of $E_f$. This can be detected by the families Seiberg-Witten invariant. This approach has been taken by both Ruberman \cite{SmoothIsotopyObstruction} and Baraglia-Konno \cite{BaragliaSWGluingFormula}. Since $b_1(X) = 0$, it is necessary that $b^+(X)$ be even and greater than $2$ to have a non-zero families Seiberg-Witten invariant of $E_f$. Baraglia-Konno constructed examples using their formula with $b^+(X) \geq 4$, which was necessary to avoid chambers. Theorem \ref{T:FSW-CSF} can accommodate chambers, hence has potential to construct examples with $b^+(X) = 2$.

\bibliography{Bibliography}{}
\bibliographystyle{plain}

\end{document}